\documentclass[11pt, a4paper]{amsart}
\usepackage{fullpage}
\usepackage{amsmath}
\usepackage{amssymb}
\usepackage{amsfonts}
\usepackage{mathrsfs}
\usepackage{amscd}
\usepackage{graphicx}
\usepackage[shortlabels]{enumitem}
\usepackage{mathtools}
\usepackage{tikz-cd}
\usepackage{hyperref}
\hypersetup{
    colorlinks=true,
    linkcolor = blue,
    citecolor= blue,
    urlcolor=cyan,
}
\usepackage[capitalise]{cleveref}
\usepackage{pstricks-add}
\usepackage{pgf,tikz}
\usepackage{subcaption}
\usepackage{bm}

\usepackage{tkz-tab}
\usepackage{xpatch}
\xpatchcmd{\tkzTabLine}{$0$}{$\bullet$}{}{}
\tikzset{t style/.style={style=solid}}

\usetikzlibrary{arrows}

\crefformat{section}{\S#2#1#3} 
\crefformat{subsection}{\S#2#1#3}
\crefformat{subsubsection}{\S#2#1#3}
\crefname{figure}{{\sc Figure}}{{\sc Figure}}
\crefname{equation}{}{}

\numberwithin{equation}{section}       

\theoremstyle{plain} 
\newcommand{\thistheoremname}{}
\newtheorem*{genericthm*}{\thistheoremname}
\newenvironment{namedtheorem*}[1]
  {\renewcommand{\thistheoremname}{#1}%
   \begin{genericthm*}}
  {\end{genericthm*}}

\newtheorem{theorem}{Theorem}[section]

\newtheorem{lemma}[theorem]{Lemma}
\newtheorem{corollary}[theorem]{Corollary}

\newtheorem*{claim*}{Claim}

\theoremstyle{definition}

\newtheorem{remark}[theorem]{Remark}

\theoremstyle{definition}

\newtheoremstyle{citing}
  {3pt}
  {3pt}
  {\itshape}
  {}
  {\bfseries}
  {.}
  {.5em}
  {\thmnote{#3}}
\theoremstyle{citing}
\newcommand{\N}{\mathbb{N}}

\newcommand{\R}{\mathbb{R}}

\newcommand{\cH}{\mathcal{H}}

\newcommand{\cM}{\mathcal{M}}
\newcommand{\cN}{\mathcal{N}}
\newcommand{\cO}{\mathcal{O}}
\newcommand{\cP}{\mathcal{P}}

\newcommand{\cT}{\mathcal{T}}

\newcommand{\teta}{\widetilde{\teta}}

\newcommand{\e}{\varepsilon}

\newcommand{\ov}{\overline}

\renewcommand{\=}{ : = }

\DeclareMathOperator{\HD}{HD} 

\tikzset{
  declare function={
    sgn(\x) = (and(\x<0, 1) * -1) +
    (and(\x>0, 1) * 1) +
    (and(\x==0, 1) * 0);
  }
}

\usepackage{lipsum}
\begin{document}


\usetikzlibrary{shapes, arrows, calc, arrows.meta, fit, positioning, quotes} 
\tikzset{  
    state/.style ={ellipse, draw, minimum width = 0.9 cm}, 
    point/.style = {circle, draw, inner sep=0.18cm, fill, node contents={}},  
    bidirected/.style={Latex-Latex,dashed}, 
    el/.style = {inner sep=2.5pt, align=right, sloped}  
}  

\colorlet{ColorGray}{gray!10}

\title{Towers and Bratteli-Vershik systems in Fibonacci-like unimodal maps}
\author{Jorge Olivares-Vinales} \thanks{} 
\address{Shanghai Center for Mathematical Sciences, Jiangwan Campus, Fudan University, No 2005 Songhu Road, Shanghai, China 200438 }
\email{jolivaresv@fudan.edu.cn}

\author{Semin Yoo}
\address{Discrete Mathematics Group \\ Institute for Basic Science \\ 55 Expo-ro Yuseong-gu, Daejeon 34126 \\ South Korea}
\email{syoo19@ibs.re.kr}

\subjclass[2020]{37E05, 37E15, 37E25}
\keywords{Fibonacci-like unimodal maps, minimal Cantor systems}

\begin{abstract}
  For a class of Fibonacci-like unimodal maps, the restriction to the $\omega$-limit set of the unique turning point defines a minimal Cantor system. We construct these Cantor sets geometrically using a nested sequence of finite covers with a tower structure. From this tower structure, we recover the associated Bratteli–Vershik model determined by the cutting times and obtain an explicit formula for the unique ergodic invariant probability measure supported on the $\omega$-limit set. We conclude with applications illustrating the scope of the construction.
  
\end{abstract}

\maketitle
\section{Introduction}

Given a unimodal map $f \colon [-1,1] \longrightarrow [-1,1]$, the $\omega$-limit set of its turning point is a 
compact set that is forward invariant under $f$. There are many combinatorial conditions that imply that the $\omega$-limit set of the turning 
point is a Cantor set on which $f$ acts in an interesting way. For example, it could be 
semi-conjugate to a substitution shift, a Sturmian shift, or an adding machine, see
\cite[\S 4.7]{Bruin2022_Top_and_erg_symb_dyn} and references therein.

In the last few decades, a great deal of work has clarified the relationship between the Cantor system arising
from unimodal maps and those classical systems like enumeration scales, Bratteli-Vershik systems, adding machines, see \cite{Bruin2003_Minimal_Cantor_systems_and_unimodal_maps,Bruin_et-all_Adding_Machines_and_wild_attractors,Alvin-Brucks_Adding_machines_kneading_maps_and_end_points, Alvin_Uniformly_recurrent_sequences_and_minimal_Cantor_omega-limit_sets}. These relations have been shown to be useful in the study of the metric properties of unimodal maps \cite{Lyubich-Milnor1993_Fib_unimodal,Bruin-et-all_1996_wild_cantor_attractors_exists, Cortez_Rivera-Letelier_Choquet_simplices, CorRL10}.

The relationship between the Cantor systems arising from unimodal maps restricted
to the $\omega$-limit set of its turning point and Bratteli-Vershik systems was first stated by 
Bruin \cite{Bruin2003_Minimal_Cantor_systems_and_unimodal_maps} and was obtained via the combinatorial data of $f$. More explicitly, the sequence of 
cutting times arising from $f$ gives rise to an enumeration scale that is semi-conjugate to a Bratteli-Vershik
system. Thus, despite being an explicit construction, it does not give immediate geometric information about the 
unimodal map.

In this paper, we provide a geometric construction of the $\omega$-limit set of the turning point for unimodal maps with Fibonacci-like combinatorics. Our construction makes explicit the geometric structure underlying the Bratteli–Vershik description and yields new information on both metric and smooth dynamical properties.
In this setting, the $\omega$-limit set of its turning point is a Cantor set and the action of the map 
restricted to it is minimal and uniquely ergodic. We construct a nested sequence of compact covers 
of the $\omega$-limit set, whose intersection equals the $\omega$-limit set. Each of these covers has a "tower structure", the containment relation between the base elements of these towers gives rise to a Bratteli diagram, and together with the action
of $f$ on these towers we recover the Bratteli-Vershik system described in \cite{Bruin2003_Minimal_Cantor_systems_and_unimodal_maps}, see also \cite[\S 5.4.5]{Bruin2022_Top_and_erg_symb_dyn}.

As applications of our construction, we are able to estimate the Hausdorff dimension of the attractor for Fibonacci-like maps in the family of symmetric tent maps. We also extend the results in \cite{Olivares-Vinales2023_Inv_meas_without_Lyap_exp} to maps with Fibonacci-like combinatorics.

We start by recalling some definitions to state our results.


\subsection{Covers and Bratteli diagrams} \label{subsec:B-v_thms}
In this paper a continuous map $f \colon [-1,1] \to [-1,1] $ is called \emph{unimodal} if $f(-1)=f(1)=-1$ and if there is 
$c \in (-1,1)$ such that $f|_{[-1,c)}$ is increasing and $f|_{(c,1]}$ is decreasing. We call the point $c$ the 
\emph{turning point of} $f$. We will assume that $f^2(c) < c < f(c)$; otherwise, the critical orbit is eventually attracted to an endpoint, and the situation is not of interest here. We will also assume that the map $f$ does not have wandering intervals. We denote the orbit of $x \in [-1,1]$ under $f$ by \[ \mathcal{O}_f(x) := \{ f^n(x) \colon n \geq 0 \}, \] we refer to its set of accumulation points as the
\emph{$\omega$-limit} set for $x$ under $f$ and we denote it by $\omega_f(x)$.

For each $n \in \N$, put $c_n \= f^n(c)$. Define the sequence of compact intervals $\{H_n\}_{n \ge 1}$ inductively by 
$H_1 = [c,c_1]$, and for each $n \ge 2$, by 

\begin{equation}
    \label{def:Hofbauer_tower}
    H_{n} \=  
    \begin{cases}
        f(H_{n-1}) & \text{ if } c \not\in H_{n-1}, \\
        [c_{n},c_1] & \text{ if } c \in H_{n-1}.
    \end{cases} 
\end{equation}
An integer $n \ge 1$ will be called a \emph{cutting time} if $c \in H_n$. This construction was introduced by Hofbauer \cite{Hofbauer-1980_The_top_entrpy_of_axx-1} and the collection $\{H_n\}_{n \ge 1}$ is usually called the \emph{Hofbauer tower} associated with $f$.
We will denote by $\{ S(k) \}_{k \ge 0}$ the
sequence of cutting times. From the assumption $c_2 < c< c_1$, it follows that $S(0)=1$ and $S(1) =2$.

We are interested in unimodal maps whose sequence of cutting times is given by 
\begin{equation}
    \label{eq:cut_times_formula}
    S_d(k) \=  
    \begin{cases}
        k+1 & \text{ if } k \le d, \\
        S_d(k-1) + S_d(k-d) & \text{ if } k > d,
    \end{cases} 
\end{equation}
for some integer $d \geq 2$. In this case, we will say that $f$ has \emph{Fibonacci-like combinatorics} or that 
$f$ is a Fibonacci-like unimodal map, and we will denote its sequence of cutting times by $\{ S_d(k)\}_{k \ge 0}$.
It is known that for a Fibonacci-like unimodal map $f$, we have that $\omega_{f}(c)$ is a Cantor set and the restriction
$f|_{\omega_{f}(c)}$ is minimal and uniquely ergodic; see \cite[Lemma 7, Lemma 8, Proposition 2]{Bruin2003_Minimal_Cantor_systems_and_unimodal_maps}, and \cite[Corollary 6.41]{Bruin2022_Top_and_erg_symb_dyn}.
Observe that if we define the map $Q_d(k) \= \max \{0, k-d\}$ for every integer $k \ge 1$, then 
$S_d(k) = S_d(k-1) + S_d(Q_d(k))$. 

For the rest of this section, let $f$ be a Fibonacci-like unimodal map with cutting times $\{ S_d(k) \}_{k \ge 0}$
for some integer $d \ge 2$.
We now state our first main result.
\begin{theorem}
    \label{theo:covers_of_the_omega_set}
    There exists a collection $\{M_{d,k}\}_{k \ge 0}$ of compact sets satisfying the following.
    \begin{itemize}
        \item[(i)]  For every integer $k \ge 0$, we have $\omega_f(c) \subset M_{d,k+1} \subset M_{d,k}$ and 
        $\omega_{f}(c) = \bigcap_{k \ge 0}M_{d,k}$.
        \item[(ii)] $M_{d,0} = [f^2(c),f(c)] $, and for $k = 1, \ldots, d-1$ there are $k+1$ compact intervals
        \[J_{1,k}, J_{d-k+1,k}, \ldots, J_{d,k}\] for which 
        \[ M_{d,k} = J_{1,k}\cup J_{d-k+1,k} \cup \cdots \cup J_{d,k}, \] and $J_{1,k}$ is the only interval containing 
        $c$.  
        \item[(iii)] For every integer $k \ge d-1$, there are $d$ compact intervals 
        \[ J_{1,k}, J_{2,k}, \ldots, J_{d,k} \]
        with  
        \[ M_{d,k} = \bigcup_{n=0}^{S_d(Q_d(k+1))-1}f^n(J_{1,k}) \quad \cup \quad \bigcup_{i=2}^d \quad \bigcup_{n=0}^{S_d(Q_d(k+i-d))-1}f^n(J_{i,k}), \] 
        $c \notin M_{d,k} \setminus J_{1,k}$, and 
        \[c \in f^{S_d(Q_d(k+1))}(J_{1,k}) \cap f^{S_d(Q_d(k+2-d))}(J_{2,k}) \cap \ldots \cap f^{S_d(Q_d(k))}(J_{d,k}).\]
        \item[(iv)] For every integer $k \ge 2d-1$, the intervals 
        \[ J_{1,k}, f(J_{1,k}), \ldots, f^{S_d(Q_d(k+1))-1}(J_{1,k}) \] are pairwise disjoint, and for 
        $i = 2, \ldots, d$, the intervals 
        \[ J_{i,k}, f(J_{i,k}), \ldots, f^{S_d(Q_d(k+i-d))-1}(J_{i,k}) \] are pairwise disjoint.
    \end{itemize}
\end{theorem}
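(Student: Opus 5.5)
We build the covers from the nested ``closest precritical'' neighbourhoods of $c$ that the cutting times determine. For each $k\ge 0$, let $D_k$ be the unique maximal closed interval around $c$ such that $f^{S_d(k)}$ maps $D_k$ monotonically onto each of its two halves, with $f^{S_d(k)}(\partial D_k)$ landing at the point $c_{S_d(Q_d(k))}$. Equivalently, $D_k=[z_{k-1},\hat z_{k-1}]$, where $z_{k-1}$ is the closest precritical point of order $S_d(k)$ and $\hat z_{k-1}=2c-z_{k-1}$ in symmetric coordinates. The standard kneading-map facts are: $Q_d$ is the kneading map, $c_{S_d(k)}\in D_{Q_d(k+1)}\setminus D_{Q_d(k+1)+1}$, and $f^{S_d(k)}$ restricted to the half-interval $[z_{k-1},c]$ is a diffeomorphism onto $[c_{S_d(Q_d(k))},c_{S_d(k)}]$. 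We take these as given from the Hofbauer tower construction, since $H_{S_d(k)}=[c_{S_d(k)},c_{S_d(Q_d(k))}]$.

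Given these facts, define
\[
J_{1,k}:=[c_{S_d(k)},c_{S_d(k+1)}]\ \text{(the interval containing }c\text{)},\qquad
J_{i,k}:=\text{the hull of } c_{S_d(k+i-d)} \text{ and a suitable image of } c_{S_d(k+i-d+1)}, \quad 2\le i\le d .
\]
Then set $M_{d,k}$ to be the union of iterates in (iii). The intuition is that the $d$ towers correspond to the $d$ ``active'' cutting times $S_d(k-d+1),\dots,S_d(k+1)$, which is exactly the memory of the recursion $S_d(k)=S_d(k-1)+S_d(k-d)$. For small $k\le d-1$, $Q_d(\cdot)=0$ and the towers of height $S_d(0)=1$ degenerate to single intervals, giving (ii).

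The key steps, in order, are as follows.

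\begin{enumerate}
\item Verify the tower-heights and the return statement $c\in f^{S_d(Q_d(k+1))}(J_{1,k})$, etc. This follows from $c_{S_d(k)}$ and $c_{S_d(k+1)}$ lying on opposite sides of $c$ with the relevant images.
\item Prove nesting $M_{d,k+1}\subset M_{d,k}$ via the Bratteli recursion. Under $k\mapsto k+1$, the new tower $1$ is $J_{1,k+1}\subset J_{1,k}$ followed by the tower over $J_{d,k}$. This concatenation realises $S_d(Q_d(k+2))=S_d(Q_d(k+1))+S_d(Q_d(k+2-d))$. Towers $i\ge2$ shift index, $J_{i,k+1}\subset J_{i+1,k}$ or a first-return image thereof. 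The inclusions are checked endpoint by endpoint, using monotonicity of $f^{S_d(j)}$ on the half-intervals.
\item Show $\omega_f(c)\subset M_{d,k}$. Each $c_n$ with $n$ large lies in some tower level, because $M_{d,k}$ is forward invariant modulo returns through $J_{1,k}$ and contains $c_{S_d(j)}$ for $j\ge k$. Closedness then gives the inclusion.
\item For the reverse inclusion $\bigcap_k M_{d,k}\subset\omega_f(c)$, show $|J_{i,k}|\to0$ uniformly along each level. Since $|c_{S_d(k)}-c|\to0$ (recurrence), $|J_{1,k}|\to0$. For the higher levels we use the no-wandering-interval hypothesis. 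A level $f^n(J_{i,k})$ not shrinking would produce a nested family of intervals whose intersection is a nondegenerate interval $I$ with all iterates avoiding $c$ in the interior (since $c\notin M_{d,k}\setminus J_{1,k}$). Such an $I$ is then wandering or attracted to a periodic orbit, and the latter is excluded because $c$ is recurrent with Fibonacci-like combinatorics (no periodic attractor). Since every level contains points of $\mathcal O_f(c)$, the intersection lies in $\overline{\mathcal O_f(c)}$, hence in $\omega_f(c)$ for the tails.
\item Prove disjointness (iv) for $k\ge 2d-1$. If $f^m(J_{1,k})\cap f^n(J_{1,k})\ne\emptyset$ with $0\le m<n<S_d(Q_d(k+1))$, apply $f^{S_d(Q_d(k+1))-n}$ to get $c$ in or near $f^{S_d(Q_d(k+1))-n+m}(J_{1,k})$, i.e.\ an earlier return to $J_{1,k}$. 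This contradicts that $S_d(Q_d(k+1))$ is the first return time of points of $J_{1,k}$ to $D_{Q_d(k+1)}$, which holds once $Q_d(k+1)\ge d$, i.e.\ $k\ge 2d-1$. The other towers are handled similarly.
\end{enumerate}

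I expect the main obstacle to be the nesting step together with the bookkeeping in (iii). Getting the correct endpoints of $J_{i,k}$ so that the concatenation of towers exactly matches the Bratteli recursion and all inclusions hold is delicate. My first attempt would be to define everything through closest precritical points and check the base case $k=d-1\to d$ explicitly before inducting.
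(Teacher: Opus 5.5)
Your proposal has a genuine gap, and it starts with the one base interval you define explicitly. The justification in step (1) is false: $c_{S_d(k)}$ and $c_{S_d(k+1)}$ do not in general lie on opposite sides of $c$. Since $\varepsilon_2=\cdots=\varepsilon_{d+1}=0$, the closest returns $c_{S_d(1)},\dots,c_{S_d(d)}$ all lie to the left of $c$. The block rule \eqref{eq:cut_time} then makes the $c_{S_d(k)}$ occur in runs of $d$ on the same side; for $d=2$, $c_2,c_3<c<c_5,c_8$. So $[c_{S_d(k)},c_{S_d(k+1)}]$ contains $c$ only when $d\mid k$. For every other $k$, including all $1\le k\le d-1$, your $J_{1,k}$ misses $c$. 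Since $c\in\omega_f(c)\subset M_{d,k}$, this contradicts $c\notin M_{d,k}\setminus J_{1,k}$ in (iii) and the last clause of (ii). The base has to be the hull $I_k$ of $\{c_{S_d(j)}\}_{j\ge k}$, namely $[c_{S_d(k)},c_{S_d(k')}]$, where $k'$ is the least integer $>k$ with $k'\equiv 1 \pmod d$. Its endpoint farther from $c$ (in the $\|\cdot\|$ sense) is $c_{S_d(k)}$, so $f(I_k)=H_{S_d(k)+1}=[c_{S_d(k)+1},c_1]$. The side switch that actually puts $c$ into $f^{S_d(Q_d(k+1))}(I_k)=[c_{S_d(Q_d(k+1))},c_{S_d(k+1)}]$ is $\varepsilon_{S_d(k+1)}\neq\varepsilon_{S_d(Q_d(k+1))}$, combined with the monotonicity in \cref{lem: injectivity}.

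More seriously, you leave the bases $J_{i,k}$, $i\ge 2$, as ``a suitable image'' and defer the nesting in step (2). That construction is the substance of the theorem.
\begin{itemize}
\item The paper takes $J_{i,k}:=f^{S_d(Q_d(k-1+i))}(I_{k-1+i})$. Then $J_{i,k}=J_{i-1,k+1}$ holds by definition, and $f^{S_d(Q_d(k+1))}(J_{1,k+1})=J_{2,k}$.
\item So the new central tower is the old one followed by tower $2$, whose height $S_d(Q_d(k+2-d))$ matches your own identity. It is not followed by the tower over $J_{d,k}$, whose height is $S_d(Q_d(k))$.
\item In that construction $c_{S_d(k+i-d)}$ is an endpoint of $f^{h_{i,k}}(J_{i,k})$, not of $J_{i,k}$, so your indexing looks shifted by one return.
\item All the nesting then reduces to one nontrivial inclusion, $J_{d,k+1}=f^{S_d(k)}(I_{k+d})\subset I_k$ (\cref{lem: interval}). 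The two intervals share the endpoint $c_{S_d(k)}$. They cannot merely touch there: after $S_d(Q_d(k+1))$ further iterates both images contain $c$, yet they would meet only at $c_{S_d(k+1)}$.
\end{itemize}

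Steps (3) and (5) also fail as stated. Note that $f^{S_d(Q_d(k+1))}(J_{1,k})$ contains $c_{S_d(Q_d(k+1))}=f^{S_d(Q_d(k+1))}(c)$. This point lies outside $J_{1,k}$. By the fact you quote, together with $Q_d(Q_d(k+1)+1)+1\le Q_d(k+1)$, it also lies outside $D_{Q_d(k+1)}$.
\begin{itemize}
\item Consequently $M_{d,k}$ is not ``forward invariant modulo returns through $J_{1,k}$''. Step (3) must instead place each $c_n$ in a specific level, as the paper does.
\item Likewise $S_d(Q_d(k+1))$ is not a first return time of the points of $J_{1,k}$ to $D_{Q_d(k+1)}$. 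Having ``$c$ in or near'' a lower level gives no contradiction in step (5).
\item Disjointness needs the quantitative argument of \cref{lem:I_k_disjoint}. Pull the overlap back to obtain $c_{S_d(k)+(n-m)}$ strictly between $c$ and $c_{S_d(k)}$. Then apply $f^{S_d(Q_d(k+1))-(n-m)}$ to contradict \cref{lem:close_return_to_crit_value}.
\end{itemize}
Your step (4), using homtervals, the absence of wandering intervals and minimality, is sound in outline once the covers actually exist.
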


The previous theorem tells us that each compact cover $M_{d,k}$ has a "tower" structure. In particular, for $k \ge d-1$, there are $d$
towers with base sets $J_{1,k}, \ldots, J_{d,k}$ and heights \[S_d(Q_d(k+1))-1, S_d(Q_d(k+2-d))-1, \ldots, S_d(Q_d(k))-1,\] respectively. 
Thus, if we have $x \in J_{i,k}$, its forward orbit will move along the tower, and once it reaches the "top level", it will be mapped by $f$ to a neighborhood of $c$.

During the construction of the covers, we will see that from one cover $M_{d,k}$ to the next one $M_{d,k+1}$, some towers
will remain the same, the tower containing the turning point will split into two towers, one containing the turning point (the central tower), which additionally stacks the last tower from the previous level on top of it, and the 
other that will have the same structure as the central tower of the previous level. 

An advantage of working with the covers described in \cref{theo:covers_of_the_omega_set} is that they not only control the topology of $\omega_f(c)$ but also provide a natural description of the unique $f$-invariant probability measure supported on $\omega_f(c)$. Let $\mu_d$ denote this measure. The following theorem gives an explicit combinatorial description of $\mu_d$ in terms of the covers provided by the previous theorem. In particular, it gives the measure of the $i$th tower at level $k$ for every integer $k \ge d-1$, and the measure of each floor at the $i$th tower at level $k$ for every integer $k \ge 2d-1$.

We introduce a notation for the "height" of the towers. It will be useful in certain cases when working with the tower dynamics, so it will be interchanged with the cutting time notation. For every integer $k \ge 1$, set 
\begin{equation}
    \label{eq:tower_height_def}
    h_{i,k} \=  
    \begin{cases}
        S_d(Q_d(k+1)) & \text{ if } i = 1, \\
        S_d(Q_d(k+i-d)) & \text{ if } \max \{d-k+1,2\} \le i \le d.
    \end{cases}
\end{equation}
We use the lower bound for $i$ given by $\max \{d-k+1,2\}$ because for $1 \le k \le d$, item $(ii)$ in \cref{theo:covers_of_the_omega_set} tells us that we have $k+1$ towers. 
\begin{theorem}
    \label{theo:measure_of_the_covers}
    Let $\beta_d$ be the largest real solution of the equation $x^d - x^{d-1} - 1 =0.$
    For every integer $k \ge d-1$ and every $1 \le i \le d$, we have
    \[ \mu_d \left( \bigcup_{n=0}^{h_{i,k}-1} f^n(J_{i,k})  \right) =
    \begin{cases}
        h_{i,k}\beta_d^{-k} & \text{ if } i = 1, \\
        h_{i,k}\beta_d^{-(k+i-1)} & \text{ otherwise. }
    \end{cases}
    \]
    Moreover, for every $k \ge 2d-1$ and every $0 \le n < h_{i,k}$, we have 
    \[
    \mu_d(f^{n}(J_{i,k})) = 
    \begin{cases}
        \beta_d^{-k} & \text{ if } i = 1, \\
        \beta_d^{-(k+i -1)} & \text{ otherwise.}
    \end{cases}
    \]
\end{theorem}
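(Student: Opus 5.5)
We will derive \cref{theo:measure_of_the_covers} from the tower structure in \cref{theo:covers_of_the_omega_set} and the unique ergodicity of $f|_{\omega_f(c)}$. For $k \ge d-1$ let $m_{i,k}$ denote the $\mu_d$-mass of the base $J_{i,k}$, or more precisely of $J_{i,k}\cap\omega_f(c)$, since $\mu_d$ is supported on $\omega_f(c)$.

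\textbf{Step 1: invariance along a tower.} Along the $i$th tower each level is mapped onto the next one by $f$. Because $c$ lies in no level except possibly the base $J_{1,k}$, the map $f$ is injective on every level apart from the base. Assume that for $k \ge 2d-1$ the levels are pairwise disjoint, by item (iv), and moreover that distinct towers intersect only in $\mu_d$-null sets (this is part of the construction in the later sections). Then invariance of $\mu_d$ gives $\mu_d(f^n(J_{i,k}))=m_{i,k}$ for $0\le n<h_{i,k}$. Indeed, $f^{-1}(f^{n+1}(J_{i,k}))\cap\omega_f(c)$ is contained in $f^n(J_{i,k})$ up to a null set. Hence the mass of the $i$th tower is $h_{i,k}m_{i,k}$. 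For $d-1\le k<2d-1$ we will argue at the level of whole towers, using the recursion in Step 2, so that disjointness of levels is never needed there.

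\textbf{Step 2: recursion from splitting towers.} Next we use the description of the passage from $M_{d,k}$ to $M_{d,k+1}$. The central tower splits into a new central tower and a copy of the old central tower; the new central tower additionally stacks the old last tower on top; the remaining towers are re-indexed. At the level of bases this should give the relations
\[
J_{1,k}\cap\omega_f(c)=(J_{1,k+1}\sqcup J_{2,k+1})\cap\omega_f(c), \qquad J_{i,k+1}=J_{i+1,k}\ (2\le i\le d-1), \qquad J_{d,k+1}\ \text{a copy of}\ J_{1,k}\text{-type base}.
\]
The heights check out: the new central height is $S_d(Q_d(k+2))=S_d(Q_d(k+1))+S_d(Q_d(k+1-d))$, which is exactly the old central height plus the old last tower's height. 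In terms of masses the relations read
\[
m_{1,k}=m_{1,k+1}+m_{d,k+1}, \qquad m_{i,k+1}=m_{i+1,k}.
\]
Unwinding them gives a linear recursion of the form $m_{1,k}=m_{1,k+1}+m_{1,k+d}$. The normalisation comes from $\sum_i h_{i,k}m_{i,k}=1$, because $\mu_d(M_{d,k})=1$ and the towers overlap only in null sets.

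\textbf{Step 3: identify the solution.} Set $m_{1,k}=\beta_d^{-k}$ and $m_{i,k}=\beta_d^{-(k+i-1)}$. Dividing $x^d=x^{d-1}+1$ by $\beta_d^{k+d}$ gives $\beta_d^{-k}=\beta_d^{-(k+1)}+\beta_d^{-(k+d)}$, so this choice satisfies the recursion. The normalisation $\sum_i h_{i,k}\beta_d^{-(\cdot)}=1$ follows by induction on $k$ from the cutting-time recursion $S_d(k)=S_d(k-1)+S_d(k-d)$. It is checked directly at $k=d-1$, where all heights equal $1$ except the central one, and both sides are polynomial identities in $\beta_d^{-1}$.

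The real difficulty is uniqueness. The recursion is linear of order $d$, so other roots $\lambda$ of $x^d-x^{d-1}-1$ also produce solutions $\lambda^{-k}$. We exclude them using positivity and normalisation. The tower masses $h_{i,k}m_{i,k}$ are nonnegative and bounded by $1$, while $h_{i,k}$ grows like $\beta_d^{k}$; hence $m_{i,k}=O(\beta_d^{-k})$. Any admixture of a root with $|\lambda|<\beta_d$ would make $|m_{1,k}|\beta_d^{k}$ unbounded or sign-changing, contradicting positivity. One still has to show that $\beta_d$ is the unique root of maximal modulus and that the other roots have modulus strictly smaller; the dominant term then forces the coefficients of the other roots to vanish.

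Alternatively, and more cleanly, one can define a finitely additive measure by these formulas on the algebra generated by all levels of all covers. By Step 2 it is consistent, so it extends to an invariant probability measure on $\omega_f(c)$ because the covers generate the topology. By unique ergodicity this measure equals $\mu_d$. I would take this second route, since it avoids the analysis of the other roots.
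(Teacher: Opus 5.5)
\textbf{The gap.} The step that fails is the assumption you defer in Step 1 and reuse for the normalisation in Step 2: that distinct towers (and, for $d-1\le k<2d-1$, distinct levels of one tower) meet only in $\mu_d$-null sets. The paper never establishes this; it only remarks that levels of different towers may intersect. For $d\ge 3$ the assumption is false.

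Take $d=3$ and $k=d-1=2$. Then $J_{1,2}=I_2=[c_3,c_6]\ni c$ and $J_{3,2}=I_4^{2}=[c_2,c_8]$. From $K_3$ we have $\varepsilon_4=\varepsilon_8=0$ and $\varepsilon_9=1$. Hence $c_3<z^-<c_8<c$, where $z^-$ is the left preimage of $c$, and so $J_{1,2}\cap J_{3,2}\supseteq[c_3,z^-]$. The orbit points $c_{S_3(j)+3}$ accumulate on $c_3$ from the right: $c_{S_3(j)+1}\to c_1$ from below, $f$ is decreasing near $c_1$, and $f$ is increasing near $c_2$. So the overlap contains a nonempty relatively open subset of $\omega_f(c)$. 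Since $f|_{\omega_f(c)}$ is minimal, $\mu_3$ has full support, the overlap has positive mass, and
\[
\mu_3(J_{1,2})+\mu_3(J_{2,2})+\mu_3(J_{3,2})>\mu_3(M_{3,2})=1=\beta_3^{-2}+\beta_3^{-3}+\beta_3^{-4}.
\]

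The same happens at every $k\ge d$ when $d\ge 3$. Each of the $d$ top levels is mapped homeomorphically onto an interval with $c$ in its interior. The preimage of $c$ inside it is $\pi(x^{\max,\ell})\in\omega_f(c)$, where $x^{\max,\ell}$ is the maximal path through that tower (Case~1 in the proof of \cref{theo:B-V_system_from_the_cover}; distinct maximal paths pass through distinct vertices). Since $c$ has only two preimages, two tops of different towers share an interior point of $\omega_f(c)$. Consequently the normalisation $\sum_i h_{i,k}m_{i,k}=1$ with $m_{i,k}=\mu_d(J_{i,k})$ cannot hold. Steps 1--3 therefore do not compute $\mu_d$ of the intervals, and for $d\ge3$ the interval statement cannot be proved as written.

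\textbf{What survives.} Your bookkeeping is correct for the coding cylinders $[e_1\ldots e_k]\subset X_{B_d}$, which are genuinely disjoint and clopen. There, invariance along towers and the splitting relations hold exactly. The recursion $m_{1,k}=m_{1,k+1}+m_{1,k+d}$ is just the Perron--Frobenius eigen-equation for $F_d$ used in the paper. Your uniqueness worry also disappears: $F_d$ is primitive, so $\beta_d$ is a simple, strictly dominant root and your first route works. This is the paper's route.

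Note, however, that the paper's last step has the same issue. It reads off $\mu_d(f^n(J_{i,k}))$ from $\nu_d$ of a cylinder via $\pi_*\nu_d=\mu_d$. That needs $\pi^{-1}(f^n(J_{i,k}))$ to equal the cylinder modulo $\nu_d$-null sets, which is the same null-overlap property and fails for the same reason. What is actually justified is the formula for cylinders, or for their $\pi$-images granted that $\pi$ is injective off a countable set.

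\textbf{Minor slips.}
\begin{itemize}
\item $J_{1,k}$ splits into $J_{1,k+1}$ and $J_{d,k+1}$, not $J_{2,k+1}$.
\item The tower stacked on the new central tower is $\mathcal{T}_{d,k}(2)$, so $h_{1,k+1}=h_{1,k}+h_{2,k}$ with $h_{2,k}=S_d(Q_d(k+2-d))$.
\item At $k=d-1$ the central height is also $1$.
\item Your ``second route'' on $\omega_f(c)$ would need a separate invariance argument beyond Step 2, because top levels map into the union of all bases.
\end{itemize}
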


Now we will explain how to construct a graph using a containment relation on the connected components of each "tower".
Put $v_0 \= \{ [c_2,c_1]\}$, for $ k \ge 1$ and $\max \{d-k+1,2\} \le i \le d$, let 
\[v_k(i) \= \{ J_{i,k}, f(J_{i,k}), \ldots, f^{h_{i,k}-1}(J_{i,k}) \}.\] 
Put $V_0 = \{v_0\}$, and for $ k \ge 1$, put 
\[ V_k \= \{v_k(1)\} \cup \{ v_k(i) \colon \max\{d-k+1,2\} \leq i \leq d \}. \] 
Set $V^{(d)} \= \cup_{k \in \N_0} V_k$ as our set of vertices. For 
$k \in \N$, we have an edge $e$ connecting $v_k(i)$ with $v_{k+1}(j)$ if and only if there exists a set
$a \in v_k(i)$ and a set $b \in v_{k+1}(j)$ so that $b \subseteq a$. Thus, if $k \in \N_0$ and $\ell \ge 2$, then there 
is no edge between vertices from $V_k$ and $V_{k+\ell}$. Let $E_k$ be the set of edges from vertices in $V_{k-1}$ to 
vertices in $V_k$, and set $E^{(d)} \= \cup_{k \geq 1} E_k $.

\begin{theorem}
    \label{theo:B-V_system_from_the_cover}
    The infinite graph $B_d \= (V^{(d)},E^{(d)})$ is a Bratteli diagram. There exists a partial order $\le$ on $E^{(d)}$ for which 
    the ordered Bratteli diagram $(V^{(d)},E^{(d)}, \le)$ has a unique minimal path and $d$ distinct maximal paths, and the 
    Bratteli-Vershik system associated with it is semi-conjugate to the action of $f$ on $\omega_{f}(c)$.
\end{theorem}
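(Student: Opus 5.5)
We read off the edge structure of $B_d$ from the way the covers are built in \cref{theo:covers_of_the_omega_set}, and then check the Bratteli axioms, the ordering and the semi-conjugacy in turn.

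\textbf{Step 1: the edge structure.} The construction behind \cref{theo:covers_of_the_omega_set} passes from $M_{d,k}$ to $M_{d,k+1}$ by a fixed combinatorial rule (the one described after that theorem):
\begin{itemize}
\item[(a)] the non-central towers are kept, with their indices shifted, so $v_{k+1}(i)$ lies floor by floor inside $v_k(i+1)$ for $2\le i\le d-1$;
\item[(b)] the old central tower $v_k(1)$ splits;
\item[(c)] $v_{k+1}(d)$ is a copy of the old central column, so it lies floor by floor in $v_k(1)$;
\item[(d)] the new central tower $v_{k+1}(1)$ consists of floors of $v_k(1)$ followed by floors of $v_k(2)$, the tower of height $h_{2,k}=S_d(Q_d(k+2-d))$.
\end{itemize}
This stacking is exactly $S_d(Q_d(k+2))=S_d(Q_d(k+1))+S_d(Q_d(k+2-d))$, i.e.\ $h_{1,k+1}=h_{1,k}+h_{2,k}$. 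From it we read off $E_{k+1}$:
\begin{itemize}
\item $v_{k+1}(1)$ has one edge to $v_k(1)$ and one to $v_k(2)$;
\item $v_{k+1}(d)$ has one edge to $v_k(1)$;
\item $v_{k+1}(i)$ has one edge to $v_k(i+1)$ for $2\le i\le d-1$.
\end{itemize}
For $k<d-1$ the same rule applies, with the number of towers growing by one, matching item (ii). The count of floors is consistent: $h_{i,k+1}$ equals the sum of the heights of the source vertices. Every vertex then has incoming and outgoing edges and each $V_k$ is finite, so $B_d$ is a Bratteli diagram.

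\textbf{Step 2: order and extremal paths.} The edges entering $v_{k+1}(j)$ are ordered by the position of the corresponding block of floors inside the column $v_{k+1}(j)$, counting from the base upward. Only $v_{k+1}(1)$ has two incoming edges: the edge from $v_k(1)$ is minimal and the edge from $v_k(2)$ is maximal.
\begin{itemize}
\item \emph{Minimal paths.} Every edge into $v_{k+1}(j)$ with $j\neq 1$ is both minimal and maximal. A minimal path that passes through $v_k(j)$ with $j\ge 2$ at some level $k$ must, going backward, follow forced edges $v_k(j)\to v_{k-1}(j+1)\to\cdots\to v(d)\to v(1)$. A direct check of the forward description shows that only the all-central path is consistent, so the minimal path is unique.
\item \emph{Maximal paths.} Maximal paths correspond to the cyclic chain $1\to 2\to\cdots\to d\to 1$ read backward. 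This gives exactly $d$ eventually periodic patterns, distinguished by their phase mod $d$.
\end{itemize}
This is where I expect to spend most care. The all-central path is also maximal at levels where the edges are unique, so phases must be counted carefully.

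\textbf{Step 3: the semi-conjugacy.} By items (i) and (iv), for $k\ge 2d-1$ the floors of $M_{d,k}$ are pairwise disjoint within each tower. Distinct towers meet $\omega_f(c)$ in disjoint pieces, which I would verify using $c\notin M_{d,k}\setminus J_{1,k}$. Each $x\in\omega_f(c)$ therefore determines a unique floor at each level, and hence a path $\pi(x)$.
\begin{itemize}
\item \emph{Surjectivity.} A path gives a nested sequence of compact floors, and their intersection meets $\omega_f(c)$ by (i).
\item \emph{Continuity.} This holds because the floors are relatively clopen in $\omega_f(c)$.
\item \emph{Commuting with the dynamics.} For $x$ not on a top floor at every level, $f$ moves $x$ one floor up in the first tower where it is not at the top, and the lower levels reset to base floors. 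This is exactly the Vershik successor, so $\pi\circ f=V\circ\pi$ there.
\item \emph{Maximal paths.} Points on top floors at every level are mapped by $f$ into $J_{1,k}$ for every $k$, by the last part of (iii). So their images lie in $\bigcap_k J_{1,k}$, which corresponds to the unique minimal path. Define $V$ of each of the $d$ maximal paths to be the minimal path; $V$ is then continuous and $\pi\circ f=V\circ\pi$ holds everywhere.
\end{itemize}

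The main obstacles are the disjointness of floors across different towers on $\omega_f(c)$ and the bookkeeping of the extremal paths.
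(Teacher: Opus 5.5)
The gap is in Step 3. Your Steps 1 and 2 agree with the paper:
\begin{itemize}
\item the edges are those of \cref{lem:inc_mtx};
\item the edge into $v_{k+1}(1)$ from $v_k(1)$ is smaller than the one from $v_k(2)$, as in \cref{eq:part_oder_in_E};
\item the minimal path is the all-central one (argue forward: a path through $v_k(j)$ with $j\ge 2$ is forced to enter $v_{k+j-1}(1)$ through the maximal edge);
\item the $d$ maximal paths are indexed by their vertex at level $d-1$.
\end{itemize}

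\textbf{Where Step 3 fails.} Your claim that distinct towers meet $\omega_f(c)$ in disjoint sets is false for every $d\ge 3$. So assigning to each $x$ its floor addresses does not define a map $\rho\colon\omega_f(c)\to X_{B_d}$, and no map of this kind can exist. A point coded by $x_{\min}$ lies in $\bigcap_k J_{1,k}=\{c\}$. Hence $\rho\circ f=V_{B_d}\circ\rho$ forces each point coded by one of the $d$ maximal paths to be a preimage of $c$, and $c$ has only two. Concretely, for large $k$ the top floor of the $i$th tower of $M_{d,k}$ is $[c_{S_d(k+i)-1},c_{S_d(k+i-d)-1}]$, which $f$ maps homeomorphically onto an interval containing $c$. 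Iterating \cref{eq:cut_time} gives $\e_{S_d(n)-1}=1$ if and only if $n\equiv 1\pmod d$. So $d-1$ of the $d$ top floors lie to the left of $c$, and all of them contain the left preimage $y^-$ of $c$. This $y^-$ belongs to $\omega_f(c)$ as a limit of the points $c_{S_d(n)-1}$ with $n\not\equiv 1\pmod d$. For $d=3$ one even has $[c_{40},c_{12}]\subset[c_{27},c_8]$: the top floor of the third tower of $M_{3,6}$ lies inside that of the second. The property $c\notin M_{d,k}\setminus J_{1,k}$ controls $c$, not its preimages. Separately, item (iii) only says that the $f$-image of each top floor contains $c$. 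Your last bullet needs the stronger fact that these images shrink to $c$.

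\textbf{The fix.} Reverse the direction, as the paper does: $(\omega_f(c),f)$ is a factor of $(X_{B_d},V_{B_d})$.
\begin{itemize}
\item Rank the paths into each vertex lexicographically, $\eta(e_1\ldots e_k)=\sum_m\theta_m S_d(m-d)$ as in \cref{eq:eta_k_formula}, and set $\pi(e_1e_2\ldots)=\bigcap_k J_{i(e_k),k}^{\eta(e_1\ldots e_k)}$ as in \cref{eq:Projection_definition}.
\item The floors are nested by your refinement rule and shrink to points (no wandering intervals). So $\pi$ is well defined, continuous, and onto by item (i). Overlaps between towers are now harmless, since a point may simply have several codings.
\item Let $j_0$ be the first level where a path is not maximal. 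Under $V_{B_d}$ its rank at level $j_0$ passes from $S_d(j_0-d)-1$ to $S_d(j_0-d)$, i.e.\ one floor up, which gives $\pi\circ V_{B_d}=f\circ\pi$.
\item For the maximal paths, \cref{lem:maximal_paths_in_B_d} puts $\pi(x^{\max,\ell})$ on top floors whose images $[c_{S_d(\ell+(k-1)d-1)},c_{S_d(\ell+kd-1)}]$ shrink to $c=\pi(x_{\min})$.
\end{itemize}
For $d\ge 3$ this $\pi$ sends every $x^{\max,\ell}$ with $\ell\ne 2$ to $y^-$. That non-injectivity is exactly what rules out a coding map in your direction. For $d=2$ this particular obstruction disappears, since the two maximal paths go to $y^-$ and $y^+$. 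Even then, the disjointness across towers would still have to be proved.
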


The Bratteli diagram $B_d$ from the theorem above coincides with 
the ordered Bratteli diagram associated with the kneading map $Q_d$ that was introduced by Bruin \cite{Bruin2003_Minimal_Cantor_systems_and_unimodal_maps} after telescoping the first $d-1$ levels.


\subsection{Applications of the tower system}
One source of motivation to construct the collection of covers described above was to understand the topology of the Cantor set $\omega_f(c)$. In this regard, our constructions allow us to compute the Hausdorff dimension of $\omega_f(c)$ when $f$ is piecewise linear, in a direct and simple way. More precisely, for $a \in (1,2]$,
let $T_a(x) = a(1-|x|)-1$ be the piecewise linear unimodal map from $[-1,1]$ to itself. For this map, the turning point is $c = 0$. Let $a_d \in (1,2]$ be the 
unique parameter for which $T_{a_d}$ has Fibonacci-like combinatorics with cutting times $\{S_d(k)\}_{k\ge 0}$. 

\begin{theorem}
    \label{theo:HD}
    The Hausdorff dimension of the Cantor set $\omega_{T_{a_d}}(c)$ is zero.
\end{theorem}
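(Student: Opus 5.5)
Since $\omega_{T_{a_d}}(c)=\bigcap_{k}M_{d,k}$ by \cref{theo:covers_of_the_omega_set}(i), each $M_{d,k}$ is a cover of $\omega_{T_{a_d}}(c)$. I will bound the $s$-dimensional Hausdorff sums of these covers and show that they tend to zero for every $s>0$. By \cref{theo:covers_of_the_omega_set}(iii), for $k\ge d-1$ the cover $M_{d,k}$ consists of the intervals $f^n(J_{i,k})$, $0\le n<h_{i,k}$, $1\le i\le d$. Their total number is $N_k=\sum_i h_{i,k}$, which grows like $C\beta_d^{k}$, because $S_d(k)\asymp\beta_d^k$ by the recursion \eqref{eq:cut_times_formula}. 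It therefore suffices to show that the maximal length of these intervals decays \emph{superexponentially} in $k$. Then $N_k\,(\max|f^n(J_{i,k})|)^s\to 0$ for every $s>0$, and hence $\HD=0$.

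The key steps, in order, are as follows.

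First, control the lengths along each tower. For $k\ge 2d-1$ the floors $f^n(J_{i,k})$ are pairwise disjoint and, except for the base $J_{1,k}$, avoid $c$ (\cref{theo:covers_of_the_omega_set}(iii),(iv)). So $T_{a_d}$ acts linearly on each floor with slope $\pm a_d$, and $|f^n(J_{i,k})|=a_d^n|J_{i,k}|$ for $n<h_{i,k}$. The same holds for $J_{1,k}$ if we view it as two halves on either side of $c$. In particular, the largest interval in each tower is the top floor $f^{h_{i,k}-1}(J_{i,k})$. This floor has length at most $a_d^{-1}|f^{h_{i,k}}(J_{i,k})|$, and $f^{h_{i,k}}(J_{i,k})$ is a neighbourhood of $c$ contained in the central interval of a coarser level (by nestedness). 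Thus every floor at level $k$ has length at most $a_d^{-1}$ times the length of a central-type interval.

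Second, obtain a recursion for the central intervals. Let $\ell_k=|J_{1,k}|$. Using the splitting of the central tower described after \cref{theo:covers_of_the_omega_set}, one iteration of $f^{h_{1,k}}$ maps (half of) $J_{1,k+1}$ into $J_{1,k'}$ with $k'\approx k-d$, with expansion $a_d^{h_{1,k}}$. This suggests $\ell_{k+1}\lesssim a_d^{-S_d(k-d)}\ell_{k-d}$. Since $a_d>1$ and $S_d(k)\asymp\beta_d^{k}$ grows exponentially, iterating gives $\ell_k\le \exp(-c\,\beta_d^{k})$ for some $c>0$, which is superexponential decay. The top floors are then bounded by $a_d^{-1}\ell_{k-j}$ for a bounded lag $j\le d$, which is still $\exp(-c'\beta_d^{k})$. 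Plugging into $N_k\,\delta_k^s\le C\beta_d^k e^{-sc'\beta_d^k}\to0$ finishes the proof. Since $\delta_k\to0$, this also gives the diameters needed for the Hausdorff measure definition.

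The main obstacle is the second step: making rigorous that $J_{1,k+1}$ is mapped by a long linear branch $f^{h}$ \emph{into} an earlier central interval without loss. This requires identifying exactly which $J_{1,k'}$ contains $f^{h_{1,k}}(J_{1,k+1})$ and checking that no fold occurs along the way. The fold issue is exactly what disjointness in (iv) and $c\notin M_{d,k}\setminus J_{1,k}$ provide. To identify $k'$, I would first try to use the tower-stacking description (central tower at level $k+1$ is the old central tower with the last tower stacked on top). If that proves delicate, a fallback is to compare lengths directly via the invariant measure. \cref{theo:measure_of_the_covers} gives equal measure $\beta_d^{-k}$ on floors, and one can try to relate Lebesgue length to the known expansion along orbits of $c$, namely $|c_{S_d(k)}-c|$. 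The latter already decays like $a_d^{-S_d(\cdot)}$ because $c_{S_d(k)}$ are closest returns.
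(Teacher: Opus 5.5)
Your proposal is correct and follows essentially the paper's route. Both arguments cover $\omega_{T_{a_d}}(c)$ by the $S_d(k)\asymp\beta_d^k$ tower floors of $M_{d,k}$, use linearity of $T_{a_d}$ along each tower to reduce to the top floors, and show these have length of order $a_d^{-S_d(k+1-d)}$, which is superexponentially small. The paper does this via $\delta_k=a_d^{S_d(k+1-d)-1}|D_k|$ and the identity $a_d^{S_d(k+1-d)}|D_k|=|D_{k+1}|+|D_{k+1-d}|$.

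The step you flag as the main obstacle is in fact immediate. One has $T_{a_d}^{h_{1,k}}(J_{1,k+1})=J_{2,k}=J_{d,k-d+2}\subset J_{1,k+1-d}$, using the shift relation $J_{j,k}=J_{j-1,k+1}$ and the nesting $I_{m+d}^{S_d(m)}\subset I_m$, so $k'=k+1-d$. Moreover, the crude bound $a_d^{S_d(k+1-d)}|D_k|\le 2$ already gives the superexponential decay without any iteration.
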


This result reflects the extreme sparsity of the Cantor attractor in the Fibonacci-like regime.

\begin{remark}
    Using the same estimates as in \cite{Lyubich-Milnor1993_Fib_unimodal}, it is possible to prove that if $f$
    is a $C^2$-unimodal Fibonacci-like map with a non-degenerate critical point (and thus turning point) ($f''(c) \neq 0$), then the Cantor 
    set $\omega_f(c)$ has Hausdorff dimension zero.
\end{remark}


Another source of motivation for our construction was to study ergodic properties of maps with "cusps". We obtain, as a corollary of \cref{theo:covers_of_the_omega_set} and \cref{theo:measure_of_the_covers}, an extension of Theorem 1 in \cite{Olivares-Vinales2023_Inv_meas_without_Lyap_exp} for Fibonacci-like maps.

For every $A \subset [-1,1]$ and every $x \in [-1,1]$, we denote the 
\emph{distance from $x$ to $A$} by \[ \textup{dist}(x,A) := \inf \{ |x-y| \colon y \in A \}. \] 
For a smooth unimodal map $f$, we write $f'$ for its derivative. We say that the turning point $c$ of $f$ is a 
\emph{Lorenz-like singularity} if there exists $\ell^+$ and $\ell^-$ in $ (0,1)$, $L > 0$, and $\delta > 0$ such that the
following holds: For every $x \in (c, c + \delta)$,
\begin{equation}
  \label{eq:right_lo-like}
  \frac{1}{L |x - c|^{\ell^+}} \leq |f'(x)| \leq  \frac{L}{ |x - c|^{\ell^+}},
\end{equation}
and for every $x \in (c - \delta,c)$,
\begin{equation}
  \label{eq:left_lo-like}
  \frac{1}{L |x - c|^{\ell^-}} \leq |f'(x)| \leq \frac{L}{ |x - c|^{\ell^-}}.
\end{equation}
We call $\ell^+$ and $\ell^-$ the \emph{right and left order of} $c$, respectively.
For an interval map $f$, 
a point $\hat{c} \in [-1,1]$ is called a \emph{critical point} if $f'(\hat{c}) = 0$. We say that a critical point $\hat{c}$ is \emph{non-flat} if there
exist $\alpha^+ > 0$, $\alpha^- >0$, $M >0$, and $\delta >0$ such that the following holds: \newline \noindent
For every $x \in (\hat{c}, \hat{c} + \delta)$,
\begin{equation}
  \label{eq:right_crit_point}
  \left| \log \frac{|f'(x)|}{|x - \hat{c}|^{\alpha^+}} \right| \leq M,
\end{equation}
and for every $x \in (\hat{c} - \delta, \hat{c})$,
\begin{equation}
  \label{eq:left_crit_point}
  \left| \log \frac{|f'(x)|}{|x - \hat{c}|^{\alpha^-}} \right| \leq M.
\end{equation}
We call $\alpha^+$ and $\alpha^-$ the \emph{right and left order of} $\hat{c}$, respectively.
Let us denote by $\textup{Crit}(f)$ the set of critical points of $f$. If $f$ is a unimodal map with turning point~$c$, we will use the notation
$\mathcal{S}(f) := \textup{Crit}(f) \cup \{ c \}$.
Let us denote by $C^{\omega}$ the class of analytic maps. Here
we say that $f$ is a \emph{$C^{\omega}$-unimodal map} if it is of class $C^{\omega}$ outside $\mathcal{S}(f)$. 

For a probability measure $\mu$ on $[-1,1]$ that is invariant by $f$, we define the 
\emph{pushforward of $\mu$ by $f$} as \[ f_*\mu := \mu \circ f^{-1}.\] Denote by 
\[ \chi_{\mu}(f) := \int \log |f'| d\mu,\] its \emph{Lyapunov exponent} if the integral exists. Similarly, for every $x \in [-1,1]$ such that $\mathcal{O}_f(x) \cap \mathcal{S}(f) = \emptyset$, denote by
\[ \chi_f(x) := \lim_{n \to \infty} \frac{1}{n} \log|(f^n)'(x)|, \] the \emph{pointwise Lyapunov exponent of $f$ at $x$} if the limit exists.

Let $\mu_d$ be the unique measure that is ergodic, 
invariant by $T_{a_d}$, and supported on $\omega_{T_{a_d}}(c)$.

\begin{corollary}\label{theo:maps_with_cusps}
Let $h \colon [-1,1] \to [-1,1]$ be a homeomorphism of class $C^\omega$ on $[-1,1]~\setminus\{0\}$ with a unique non-flat critical point at $0$, and put $\tilde{\mu}_d := h_*\mu_d$.
Then, the $C^\omega$-unimodal map $f~:=~h~\circ~T_{a_d}~\circ~h^{-1} $ has a Lorenz-like singularity at $\tilde{c} := h(0)$ and satisfies the following properties:
\begin{enumerate}
\item $\chi_{\tilde{\mu}_d}(f) $ is not defined.
\vspace{0.3cm}
\item For $x \in \omega_f(\tilde{c})$, the pointwise Lyapunov exponent of $f$ at $x$ does not exist if $\mathcal{O}_f(x) \cap \mathcal{S}(f) = \emptyset$, and it is not defined if $\mathcal{O}_f(x) \cap \mathcal{S}(f) \neq \emptyset$.
\vspace{0.3cm}
\item $\log(\textup{dist}(\cdot,\mathcal{S}(f))) \notin L^1(\tilde{\mu}_d).$
\vspace{0.3cm}
\item $f$ has \emph{exponential recurrence} of the Lorenz-like singularity orbit, thus
    \begin{equation*}
        \limsup_{n \to \infty} \frac{ -\log |f^n(\tilde{c}) - \tilde{c}|}{n} \in (0, +\infty).
    \end{equation*}
  \end{enumerate}
\end{corollary}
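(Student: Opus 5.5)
Write $f = h\circ T_{a_d}\circ h^{-1}$ and transport everything through the conjugacy $h$. We have $f'(y) = h'(T_{a_d}(x))\cdot (\pm a_d)/h'(x)$ with $x=h^{-1}(y)$. Since $h$ has a non-flat critical point only at $0$, $|h'(x)|\asymp |x|^{\alpha^\pm}$ near $0$, and $h^{-1}$ satisfies $|h^{-1}(y)-0|\asymp |y-\tilde c|^{1/(1+\alpha^\pm)}$. Near $\tilde c$ the numerator $h'(T_{a_d}(x))$ is bounded away from $0$, since $T_{a_d}(0)=1\neq 0$. So $|f'(y)|\asymp |y-\tilde c|^{-\alpha^\pm/(1+\alpha^\pm)}$, which gives the Lorenz-like singularity with $\ell^\pm=\alpha^\pm/(1+\alpha^\pm)\in(0,1)$. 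The other points of $\mathcal S(f)$ are the preimages $h(x)$ with $T_{a_d}(x)=0$; there $f'=0$ with non-flat order $\alpha$. Away from these finitely many points, $\log|f'|$ is bounded.

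The central estimate concerns how $\mu_d$ weights neighbourhoods of $0$ and of $T_{a_d}^{-1}(0)$. Use \cref{theo:covers_of_the_omega_set} and \cref{theo:measure_of_the_covers}: the floor $J_{1,k}$ contains $c=0$ and has $\mu_d(J_{1,k})=\beta_d^{-k}$. For the tent map, derivatives are $a_d^n$ and $a_d=\beta_d$ (the entropy of the Fibonacci-like kneading), so the lengths of $J_{1,k}$ and of the annuli $J_{1,k}\setminus J_{1,k+1}$ should scale like $\beta_d^{-k}$ up to constants. Points of the annulus have $|x|\asymp\beta_d^{-k}$ in the worst case, which needs a lower bound on the gap between $J_{1,k+1}$ and $0$. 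I would instead bound $-\log|x|\ge k\log\beta_d - C$ on $J_{1,k}$ and use $\mu_d(J_{1,k}\setminus J_{1,k+1})\ge c\beta_d^{-k}$, which follows from $\beta_d^{-k}-\beta_d^{-(k+1)}$.

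This gives
\[
\int -\log \mathrm{dist}(x,\mathcal S(f))\,d\tilde\mu_d \gtrsim \sum_k k\,\beta_d^{-k}.
\]
That sum converges, so this naive bound is insufficient. This is the main obstacle: one must exploit that the measure near $0$ comes from lengths much smaller than the measure. \cref{theo:HD} (dimension zero) suggests intervals of measure $\beta_d^{-k}$ have length super-exponentially small; the closest returns satisfy $|c_{S_d(k)}|\approx a_d^{-S_d(k)}$ with $S_d(k)\sim\beta_d^{k}$. So on a set of measure $\asymp\beta_d^{-k}$ we get $-\log|x|\gtrsim S_d(k-C)\log\beta_d\asymp\beta_d^{k}$, and the sum $\sum_k \beta_d^{-k}\beta_d^{k}$ diverges. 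I would prove the length bound by following $J_{1,k}$ through its tower. By item (iv) the floors are disjoint and the branch $T^{h_{1,k}}_{a_d}$ is linear on $J_{1,k}$, expanding by $a_d^{h_{1,k}}$ onto an interval inside $[-1,1]$. Hence $|J_{1,k}|\le 2a_d^{-S_d(Q_d(k+1))}$.

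This yields (3), and equally divergence of $\int\log^-|f'|$ near the critical preimages and $\int\log^+|f'|$ near $\tilde c$. Both are infinite with opposite signs, so $\chi_{\tilde\mu_d}(f)$ is undefined, giving (1). For (2), take $x\in\omega_f(\tilde c)$ off $\mathcal S(f)$. By minimality its orbit visits $J_{1,k}$ and the preimage floors at rates governed by the tower heights. Along the subsequence of times just after a deep visit to $\tilde c$, the Birkhoff sum of $\log|f'|$ jumps by roughly $+\ell\, S_d(\cdot)\log\beta_d$. Just after a visit near a critical preimage it jumps by a comparable negative amount. These jumps are of the same order as $n$, so $\limsup\neq\liminf$. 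If the orbit hits $\mathcal S(f)$, $f'$ vanishes or is infinite and the exponent is undefined. Finally (4) follows from $|T^{S_d(k)}_{a_d}(0)|\asymp a_d^{-S_d(k-d)}$ and $S_d(k-d)/S_d(k)\to\beta_d^{-d}$, transported by the Hölder bounds on $h$.
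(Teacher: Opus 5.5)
Your route is the one the paper intends: it omits the proof and presents the corollary as a consequence of \cref{theo:covers_of_the_omega_set} and \cref{theo:measure_of_the_covers}. Your argument for (3) is sound. Note that $T_{a_d}^{h_{1,k}}$ is affine only on each half of $J_{1,k}$, not on $J_{1,k}$ itself, which contains $0$; this still gives $|x|\le 2a_d^{-S_d(k+1-d)}$ on $J_{1,k}$. The annuli $J_{1,k}\setminus J_{1,k+1}$ carry mass $(1-\beta_d^{-1})\beta_d^{-k}$, and $\sum_k\beta_d^{-k}S_d(k+1-d)=\infty$. For the $\log^-$ part of (1), make the reduction explicit. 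With $\psi=\log|h'|$ one has $\log|f'\circ h|=\psi\circ T_{a_d}-\psi+\log a_d$, and invariance gives $\int_{T_{a_d}^{-1}U}\psi\circ T_{a_d}\,d\mu_d=\int_U\psi\,d\mu_d=-\infty$. Your identification $a_d=\beta_d$ is false. For $d=2$, $T_{\beta_2}$ has $c_1=\beta_2-1$, $c_2=\beta_2-2$, $c_3=\beta_2^2-\beta_2-1=0$, so its turning point is periodic. The claim is not load-bearing: replace $\log\beta_d$ by $\log a_d$, since divergence uses only $S_d(k)\asymp\beta_d^k$. Also, $T_{a_d}(0)=a_d-1$, not $1$.

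The genuine gaps are in (2) and (4).

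For (2), the same identity gives $\frac1n\log|(f^n)'(h(x))|=\log a_d+\frac1n\psi(T_{a_d}^n x)+o(1)$. So (2) amounts to $\liminf_n\frac1n\log|T_{a_d}^n x|<0$ for \emph{every} $x\in\omega_{T_{a_d}}(0)$ whose orbit avoids $0$. Minimality gives no rate, and ``the jumps are of the same order as $n$'' is precisely what needs proof. It follows from the towers:
\begin{itemize}
\item each top floor satisfies $T_{a_d}^{h_{i,k}}(J_{i,k})=[c_{S_d(Q_d(k+i))},c_{S_d(k+i)}]\subset I_{k+1-d}$, by \cref{lem: injectivity} and the computation after \cref{eq:J_i_k_to_next_level};
\item so every such $x$ lands within $|I_{k+1-d}|\le 4a_d^{-S_d(k+2-2d)}$ of $0$ after at most $h_{1,k}=S_d(k+1-d)$ steps;
\item these times tend to infinity because the orbit avoids $0$, and $S_d(k+2-2d)/S_d(k+1-d)\to\beta_d^{-(d-1)}$.
\end{itemize}

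For (4), your asymptotic $|c_{S_d(k)}|\asymp a_d^{-S_d(k-d)}$ is wrong. The relation $a_d^{S_d(k+1-d)}|c_{S_d(k)}|=|c_{S_d(k+1)}|+|c_{S_d(k+1-d)}|$ from the proof of \cref{lem: diameter} gives $-\log_{a_d}|c_{S_d(k)}|=S_d(k+1-d)-\log_{a_d}|c_{S_d(k+1-d)}|+O(1)$. This is a larger multiple of $S_d(k)$; for $d=2$ it is about $S_2(k+1)$. The one-sided bound $|c_{S_d(k)}|\le 2a_d^{-S_d(k+1-d)}$ does give $\limsup>0$. Finiteness needs two things you omit:
\begin{itemize}
\item a lower bound $-\log|c_{S_d(k)}|\le CS_d(k)$, obtained by iterating the relation, or from \cref{lem: diameter} using $|c_{S_d(k+1)}|\ge\prod_{j=1}^{d-1}|c_{S_d(k+j)}|$;
\item control at non-cutting times, which is \cref{lem: distance with c}: for $S_d(k-1)<n<S_d(k)$ one has $|c_n|>|c_{S_d(k)}|$, so $-\log|c_n|/n\le CS_d(k)/S_d(k-1)$ stays bounded.
\end{itemize}
Evaluating only along $n=S_d(k)$ cannot give the upper bound on the $\limsup$.
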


The proof follows \cite{Olivares-Vinales2023_Inv_meas_without_Lyap_exp} with only minor modifications, and is omitted.


\subsection{Strategy and organization}

In \cref{sec:preliminaries}, we recall the definition of the kneading map associated with a unimodal map and its relation with the sequence of 
cutting times, as well as other related concepts. We present some properties of Fibonacci-like combinatorics.

In \cref{sec: closure of the orbit}, we make a detailed combinatorial description of the $\omega$-limit set
of the turning point for Fibonacci-like maps. We use this information to construct the sequence of covers described
in \cref{theo:covers_of_the_omega_set}, and prove that it satisfies all the properties mentioned there.

In \cref{sec:B-V_system}, after giving the definition of Bratteli-Vershik systems, we associate a graph to the 
collection of compact covers constructed in \cref{sec: closure of the orbit} and the containment of connected components relation,
leading to the proof of \cref{theo:B-V_system_from_the_cover}. Finally, we study the transition matrices associated with the Bratteli diagram, leading to the proof of \cref{theo:measure_of_the_covers}.

Finally, in \cref{sec:HD_proof}, we prove \cref{theo:HD}.

\subsection*{Acknowledgements}
The authors are grateful to Henk Bruin for helpful comments.
The first author was supported by the New Cornerstone Science Foundation through the New Cornerstone Investigator Program.
The second author was supported by the Institute for Basic Science (IBS-R029-C1).

\section{Preliminaries} \label{sec:preliminaries}

Throughout the paper, $\mathbb{N}$ denotes the set of positive integers, and $\mathbb{N}_{0}:=\mathbb{N} \cup \{0\}$. We also denote by $I$ the closed interval $\left[-1,1 \right] \subset \mathbb{R}$.

The interval $I$ is equipped with the distance induced by the absolute value $|\cdot|$ on $\mathbb{R}$. 
For an interval $J \subset I$, we denote its length by $|J|$.

For real numbers $a,b$, we put 
\[\left[ a,b \right]:=\left[ \min\{a,b\},\max\{a,b\}\right] \quad \text{ and } \quad (a,b):=(\min\{a,b\},\max\{a,b\}).\]

\subsection{Kneading theory of unimodal maps} \label{sec:kneading_theory}

Let $f:I \rightarrow I$ be a unimodal map with turning point $c \in (-1,1)$ and 
$f(-1)=f(1)=-1$. Put $c_i:=f^{i}(c)$ for $i \ge 1$ with $c_2 < c< c_1$. 
We are interested in the case when the orbit of the turning point is infinite, so we assume that $c$ is neither periodic nor eventually periodic.

In this section, we will describe the combinatorics of the turning point orbit.  
For a detailed exposition of the constructions presented below, 
we refer the reader to \cite[Chapter 3]{MevS11}, \cite[Chapter 6]{Brucks-Bruin2004_Topics_one_dimension}, and \cite[Section 3.6]{Bruin2022_Top_and_erg_symb_dyn}.

We define the \emph{itinerary} of a point $x \in I$ as the sequence $K(x) \= e_1(x)e_2(x)e_3(x)\ldots$,
where

\[e_n(x):= \begin{cases}
0 & \text{ if } f^n(x) < c \\
C & \text{ if } f^n(x) = c \\
1 & \text{ if } f^n(x) > c 
\end{cases}.\]
The itinerary of the turning point $c$ is called the \emph{kneading invariant} or 
\emph{kneading sequence} of $f$, and we denote it by $\e_f = (\e_k(f))_{k \ge 1}$. When the map is clear from the context, we omit the dependence on $f$ from the notation for the kneading sequence.
Since we assumed $c_2 < c < c_1$, we have 
$\e_1(f)\e_2(f) = 10$, and since we assumed the orbit of $c$ is not finite (in particular, $c$ is not periodic), we have $\e_i(f) \in \{ 0,1 \}$ for every $i \in \N$.

We can use the kneading sequences to define the cutting times introduced in \cref{subsec:B-v_thms}. Set $S(0) =1$, and for every $k \ge 1$ set
\[S(k):= \min \{ n > S(k-1) \mid c \in (c_n, c_{n-S(k-1)}) \} = \min\{n > S(k-1) \mid \e_n \ne \e_{n-S(k-1)}\}. \]
Thus, the cutting times allow us to split the kneading sequence into blocks
\begin{equation}
    \label{eq:cut_time}
    \Delta_k \= \e_{S(k-1)+1}\e_{S(k-1)+2} \ldots \e_{S(k)-1}\e_{S(k)} = \e_1\e_2 \ldots \e_{S(Q(k))-1}(1 - \e_{S(Q(k))}),
\end{equation}
for every $k \in \N$, where $Q(k) \in \N_0$ satisfies
\begin{equation}
    \label{eq:cut_time_kneading_map_relation}
    S(Q(k)) = S(k) - S(k-1).
\end{equation}

It can be proved that for every $k \in \N_0$ and every $S(k) < n \le S(k+1)$, we 
have $H_n \subset H_{n-S(k)}$, and they have a common boundary point, $c_{n-S(k)}$. Then
$ c \in H_{S(k+1)} \subset H_{S(k+1)-S(k)}$, and thus $S(Q(k+1)) = S(k+1)-S(k)$ is again a cutting time. 
Thus, \cref{eq:cut_time_kneading_map_relation} defines a function $Q:\mathbb{N}\to\mathbb{N}_0$ and
we call $Q$ the \textit{kneading map}.
Note that if either the kneading sequence, the cutting time, or the kneading map is given, the other two can be determined directly. 

We now specialize to the class of maps that will be studied throughout the paper.
\subsection{The Fibonacci-like maps}
We are interested in maps whose kneading map is given by 
\[Q_d(k)=\max\{0,k-d\},\] for a fixed $d \geq 2$. 
In this case, the cutting times are given by \cref{eq:cut_times_formula}.
For example, when $d=2$, $Q_2(k)=\max \{0,k-2\}$, and $S_2(k)$ follows the Fibonacci
recursion (with $S_2(0) = 1$, $S_2(1)=2$). Maps with this combinatorics are known as \emph{Fibonacci maps}. For $d \geq 2$,
we call these maps \emph{Fibonacci-like}. 

It is known that if $f$ is a Fibonacci-like unimodal map, then its turning point is recurrent, 
$\omega_{f}(c)$ is a Cantor set, and the restriction $f|_{\omega_{f}(c)}$ is minimal and uniquely ergodic; 
see \cite[Lemma 7, Lemma 8, Proposition 2]{Bruin2003_Minimal_Cantor_systems_and_unimodal_maps},
and \cite[Corollary 6.41]{Bruin2022_Top_and_erg_symb_dyn}.

Let $K_d = \e_1\e_2 \ldots$ be the kneading sequence determined by the kneading map $Q_d(k) = \max \{0,k-d \}$. 
Then the first $d+1$ symbols consist of the initial "$1$" followed by $d$ zeros,  
\begin{equation}
    \label{eq:prefix_kneading_invariant}
    \e_1\e_2\e_3 \ldots \e_{d+1} = 100 \ldots 0. 
\end{equation} 
For example, we have the following kneading sequences
\begin{align*}
    K_2 &= 100111011001010011100 \ldots \\
    K_3 &= 100011101100110001010 \ldots \\
    K_4 &= 100001110110011000110 \ldots 
\end{align*}

The following identity will be repeatedly used to compute tower heights and measure estimates.
\begin{lemma}
    \label{lem:sum_cut_times}
    For every $j \in \N_0$, we have 
    \[ \sum_{k=0}^{j} S_d(k) = S_d(j+d) - S_d(d-1). \]
\end{lemma}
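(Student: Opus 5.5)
We argue by induction on $j$, using only the recursion \cref{eq:cut_times_formula}, namely $S_d(k) = S_d(k-1) + S_d(k-d)$ for $k > d$. Rewritten with $k = m+d$, this gives, for every $m \ge 1$,
\[ S_d(m) = S_d(m+d) - S_d(m+d-1). \]
The identity is therefore a telescoping sum as soon as $S_d(0)$ is also written as a difference of consecutive terms, and this is where the base case enters.

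\textbf{Base case $j=0$.} The claim reads $S_d(0) = S_d(d) - S_d(d-1)$. Since $d \le d$ and $d-1 \le d$, the initial segment of \cref{eq:cut_times_formula} gives $S_d(d) = d+1$ and $S_d(d-1) = d$. Hence the right-hand side equals $1 = S_d(0)$.

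\textbf{Inductive step.} Assume the identity holds for some $j \ge 0$. Then
\[ \sum_{k=0}^{j+1} S_d(k) = S_d(j+d) - S_d(d-1) + S_d(j+1). \]
Since $j+1+d > d$, the recursion gives $S_d(j+1+d) = S_d(j+d) + S_d(j+1)$. The right-hand side therefore equals $S_d(j+1+d) - S_d(d-1)$, which completes the induction.

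The only point needing care is that $m = 0$ is excluded from the displayed relation: applying the recursion at $k = d$ would give $S_d(d-1) + S_d(0) = d+1 = S_d(d)$, which happens to hold, but $k = d$ lies in the initial segment of the definition, not the recursive one. So the base case must be checked directly from the explicit initial values, as done above, rather than by quoting the recursion. No other obstacle is expected.
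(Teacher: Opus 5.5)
Your proof is correct and is essentially the same as the paper's. Both argue by induction on $j$, check the base case from the explicit values $S_d(d)=d+1$ and $S_d(d-1)=d$, and close the inductive step with the recursion $S_d(j+1+d)=S_d(j+d)+S_d(j+1)$, which is valid since $j+1+d>d$.
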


\begin{proof}
    We prove the lemma by induction on $j$. For $j=0$, we get 
    \[ S_d(0) = S_d(d) - S_d(d-1) = d+1-d = 1. \] Thus, the result holds. Suppose that the 
    result holds for $j \in \N_0$. Then
    \begin{align*}
        \sum_{k=0}^{j+1}S_d(k) &= S_d(j+1) + \sum_{k=0}^{j}S_d(k) \\
            &= S_d(j+1) + S_d(j+d) -S_d(d-1) \\
            &= S_d(j+1+d) - S_d(d-1),
    \end{align*}
    where the second equality follows from the induction hypothesis, and the third equality 
    follows from \cref{eq:cut_times_formula}.
\end{proof}

The following lemma is a known result, see for example \cite[Exercise 6.1.6]{Brucks-Bruin2004_Topics_one_dimension}. 
We include a proof for the reader's convenience.

\begin{lemma}
    \label{lem:close_return_to_crit_value}
    For every $k \in \N$, we have $H_{S_d(k+1)+1} \subset H_{S_d(k)+1}$.
\end{lemma}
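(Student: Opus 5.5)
The plan is to rewrite the statement as a comparison of how close the points $c_{S_d(k)}$ and $c_{S_d(k+1)}$ come to $c$, and then prove that comparison by strong induction on $k$ using the kneading map $Q_d$. Since $c \in H_{S_d(k)}$ for every $k$, the definition \cref{def:Hofbauer_tower} gives $H_{S_d(k)+1} = [c_{S_d(k)+1}, c_1]$, and likewise $H_{S_d(k+1)+1} = [c_{S_d(k+1)+1}, c_1]$. Both intervals have the endpoint $c_1$. So the claim is equivalent to
\[
c_{S_d(k+1)+1} \in [c_{S_d(k)+1}, c_1].
\]
Because $f$ is unimodal with maximum $c_1$ at $c$, this holds if and only if $c_{S_d(k+1)}$ lies in the closed interval $U_k := f^{-1}([c_{S_d(k)+1},c_1])$. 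This is the symmetric neighbourhood of $c$ whose endpoints are $c_{S_d(k)}$ and its other preimage $\hat c_{S_d(k)}$. Equivalently, $c_{S_d(k+1)}$ must be a closer return to $c$ than $c_{S_d(k)}$, measured by the value of $f$. I will prove this statement $P(k)$, namely $c_{S_d(k+1)} \in U_k$, by strong induction on $k$.

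\emph{Base cases ($k < d$).} By \cref{eq:prefix_kneading_invariant}, the orbit satisfies $c_2 < c_3 < \cdots < c_{d+1} < c$. For $k<d$ we have $S_d(k)=k+1$, so the points $c_{S_d(k)}$ increase monotonically towards $c$ from the left. Then $f(c_{S_d(k)}) = c_{S_d(k)+1}$ also increases with $k$, which is exactly $P(k)$ there. The case $k=d-1$, where $S_d(d)=d+1$, is covered in the same way.

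\emph{Inductive step ($k \ge d$).} I will use the fact recalled in \cref{sec:kneading_theory}: for $S_d(k) < n \le S_d(k+1)$ we have $H_n \subset H_{n-S_d(k)}$ with common boundary point $c_{n-S_d(k)}$. This yields the following.
\begin{itemize}
\item Taking $n = S_d(k+1)$ and using $S_d(k+1)-S_d(k) = S_d(Q_d(k+1)) = S_d(k+1-d)$, we get $H_{S_d(k+1)} \subset H_{S_d(k+1-d)}$, sharing the endpoint $c_{S_d(k+1-d)}$.
\item The map $f^{S_d(k)-1}$ is monotone on $H_2 \supset [c_{S_d(k+1-d)+1}, c_1]$, because no $H_m$ with $m<S_d(k)$ along that branch contains $c$ in its interior. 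This map sends $c_1$ to $c_{S_d(k)}$ and $c_{S_d(k+1-d)+1}$ to $c_{S_d(k+1)}$.
\end{itemize}
By the induction hypothesis applied at indices $k-d < k-1$, the point $c_{S_d(k+1-d)+1}$ lies in $[c_{S_d(k-d)+1}, c_1]$, and in fact closer to $c_1$ than $c_{S_d(k-d)+1}$. Transporting this nesting through the monotone branch $f^{S_d(k)-1}$, and comparing with the analogous description of $c_{S_d(k)}$ as the image of $c_{S_d(k-d)+1}$ under $f^{S_d(k-1)-1}$, should place $c_{S_d(k+1)}$ strictly between $c$ and the symmetric bounds of $U_k$.

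The main obstacle is this last transport step. The two points $c_{S_d(k)}$ and $c_{S_d(k+1)}$ are reached by different iterates ($S_d(k-1)$ versus $S_d(k)$), so they cannot be compared directly. I would first try to use the Fibonacci-like identity $Q_d(k+1)=Q_d(k)+1$ for $k\ge d$, together with \cref{lem:sum_cut_times}, to factor
\[
f^{S_d(k)} = f^{S_d(k-1)} \circ f^{S_d(k-d)},
\]
so that both points become images of a single monotone branch applied to nested intervals around $c_{S_d(k-d)}$. I would also track orientation through the parity of the number of $1$'s in the blocks $\Delta_j$ from \cref{eq:cut_time}, to make sure the nesting lands on the correct side of $c$.
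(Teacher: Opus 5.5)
\textbf{There is a genuine gap in your inductive step.} Your reformulation (both intervals end at $c_1$, so the claim is $c_{S_d(k+1)+1}\in[c_{S_d(k)+1},c_1]$) is fine, and so are your base cases $k\le d-1$. The step for $k \ge d$ rests on a false claim. You assert that $f^{S_d(k)-1}$ is monotone on $H_2=[c_2,c_1]$, or on $[c_{S_d(k+1-d)+1},c_1]=H_{S_d(k+1-d)+1}$. But $f$ itself already folds $H_2$, since $c\in H_2$. For the smaller interval, the Hofbauer recursion shows that $f^{S_d(Q_d(k+2-d))-1}$ maps $H_{S_d(k+1-d)+1}$ homeomorphically onto $H_{S_d(k+2-d)}$, which contains $c$ in its interior. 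So the branch does pass through a level containing $c$, and every iterate of length at least $S_d(Q_d(k+2-d))$ fails to be injective there. For $k\ge d$ this includes $S_d(k)-1$.

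You have swapped the roles of the two orbit segments. The monotone branch is the short one, $f^{S_d(k+1-d)-1}\colon H_{S_d(k)+1}\to H_{S_d(k+1)}$, not the long one. Beyond that, the actual comparison of $c_{S_d(k)}$ and $c_{S_d(k+1)}$ is only announced (``should place'', ``I would first try''). So $P(k)$ is not established for $k\ge d$ even if monotonicity were granted.

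\textbf{The missing idea} is a short contradiction that needs neither induction nor orientation bookkeeping.
\begin{enumerate}
\item If the inclusion fails then, since both intervals end at $c_1$, you have $H_{S_d(k)+1}\subsetneq H_{S_d(k+1)+1}$.
\item Put $m=S_d(Q_d(k+1))-1$. No index strictly between $S_d(k)$ and $S_d(k+1)$, nor strictly between $S_d(k+1)$ and $S_d(k+2)$, is a cutting time. Hence \cref{def:Hofbauer_tower} gives $f^m(H_{S_d(k)+1})=H_{S_d(k+1)}\ni c$ and $f^m(H_{S_d(k+1)+1})=H_{S_d(k+1)+S_d(Q_d(k+1))}$.
\item Images preserve inclusion, so $H_{S_d(k+1)}\subset H_{S_d(k+1)+S_d(Q_d(k+1))}$.
\item For $k\ge d-1$ you have $Q_d(k+1)<Q_d(k+2)$, hence $S_d(k+1)<S_d(k+1)+S_d(Q_d(k+1))<S_d(k+2)$. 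So this index is not a cutting time, and $c\notin H_{S_d(k+1)+S_d(Q_d(k+1))}$, which is a contradiction.
\end{enumerate}
This is essentially the paper's argument for large $k$, and no monotonicity is needed at all. It works for every $k\ge d-1$, so together with your base cases it covers all $k$.
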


\begin{proof}
    For every $1 \le k \le d$,
    by \cref{eq:prefix_kneading_invariant}, $c_{S_d(k)} \in (-1,c)$, and by 
    \cref{eq:cut_times_formula} and since $f|_{(-1,c)}$ is increasing, 
    \[c_{S_d(k)} < c_{S_d(k+1)} = c_{S_d(k)+1}.\] By \cref{def:Hofbauer_tower}, 
    $H_{S_d(k)+1} = [c_{S_d(k)+1},c_1] = [c_{S_d(k+1)},c_1]$, and thus $H_{S_d(k+1)+1} \subset H_{S_d(k)+1}$.

    Now, for $k > d$, $Q_d(k) = k-d > 0$. So by \cref{eq:cut_time}, 
    $c_{S_d(k)+1} \in (c,c_1)$. Suppose, by contradiction, that the lemma does not hold; then there exists 
    $k > d$ such that $H_{S_d(k)+1} \subset H_{S_d(k+1)+1}$. Then, 
    \[ c \in f^{S_d(Q_d(k+1))}(H_{S_d(k)+1}) \subset f^{S_d(Q_d(k+1))}(H_{S_d(k+1)+1}) = 
    [c_{S_d(Q_d(k+1))}, c_{S_d(k+1)+S_d(Q_d(k+1))}],\] 
    by the definition of the Hofbauer tower and monotonicity on the corresponding branch.
    Since \[Q_d(k+1) = k+1-d < k+2-d = Q_d(k+2),\] and thus, $c_{S_d(Q_d(k+1))}$, and $c_{S_d(k+1)+S_d(Q_d(k+1))}$ are on the same side of $c$, contradicting \cref{eq:cut_time}. 
\end{proof}

We will use the notation $x \mapsto \tau(x)$ for the order-reversing involution
which satisfies $\tau(x) \neq x$, and  $f(\tau(x)) = f(x)$ on $I \setminus \{c\}$. 
For $x \in I \setminus \{c\}$, we put 
\[ x^+ \= \max \{ x, \tau(x) \} \hspace{0.5cm} \text{ and } \hspace{0.5cm}
x^- \= \min \{x, \tau(x) \}.\] Thus we have $x^- < c < x^+$, and $f(x^-) = f(x^+)$. 
Let $\|x-c\|$ denote the maximum of $|x-c|$ and $|\tau(x) -c|$.
Then, as a consequence of \cref{lem:close_return_to_crit_value}, we have 
\begin{equation}
    \label{eq:right_side_returns_to_c}
    |c_{S_d(k)}^+ - c| > |c_{S_d(k+1)}^+ - c|,
\end{equation}

\begin{equation}
    \label{eq:left_side_returns_to_c}
    |c_{S_d(k)}^- - c| > |c_{S_d(k+1)}^- - c|,
\end{equation}
and
\begin{equation}
    \label{eq:max_returns_to_c}
    \|c_{S_d(k)} - c\| > \|c_{S_d(k+1)} - c\|.
\end{equation}

\begin{remark}
    When the map under consideration is symmetric, for example, in the family 
    \[ f_{\lambda, \alpha}(x) = \lambda(1 - |x|^{\alpha}) - 1, \]
    inequalities \cref{eq:right_side_returns_to_c},
\cref{eq:left_side_returns_to_c}, and \cref{eq:max_returns_to_c} coincide. Thus
\begin{equation}
    \label{eq:close_retur_symm_tent_map}
    |c_{S_d(k)} - c| > |c_{S_d(k+1)} - c|
\end{equation}
for every $k \in \N$.
\end{remark}

\section{The set $\omega_f(c)$} \label{sec: closure of the orbit}

The purpose of this section is to prove \cref{theo:covers_of_the_omega_set}. Throughout this section, let
$f \colon I \longrightarrow I$ be a Fibonacci-like unimodal map with no wandering intervals, with turning point $c$, kneading map $Q_d$, and sequence of
cutting times $\{S_d(k)\}_{k \ge 0}$ for some integer $d \ge 2$. 
After an affine change of coordinates, we may
assume that $c = 0$. As before, for every $i \in \N$ put $c_i \= f^i(c)$. For any interval $J \subset I$, and any integer $n \ge 0$, write $J^n \= f^n(J)$.

For $k \geq 0$, let $I_{k} = I_k(d)$ be the smallest closed interval containing all of the points
$c_{S_d(j)}$ for every $j \ge k$. By iterating the block substitution rule \cref{eq:cut_time}, for every $m \in \N$ and $1 \le j \le d$,
we have
\[ \e_{S_d(md + j)} = 
\begin{cases}
    1- \e_{S_d(j)} & \text{ if } m \equiv 1  \pmod{2} \\
    \e_{S_d(j)} & \text{ if } m \equiv 0  \pmod{2}.
\end{cases}\]
Since $c_{S_d(j)}\to c$ 
and the parity rule for the relative position of $c_{S_d(md+j)}$, the extremal points of $I_k$ occur at indices $k$ and $k+ \ell$ with $\ell$ minimal such that $c_{S_d(k)}$ and $c_{S_d(k +\ell)}$ lie on opposite sides of $c$.
Then, by \cref{eq:prefix_kneading_invariant} and \cref{eq:max_returns_to_c}, if $k \equiv r \pmod{d}$,
\begin{equation}
    \label{eq:I_k_interval}
    I_k = 
    \begin{cases}
        [c_{S_d(k)}, c_{S_d(k+1)}] & \text{ if } r=0 \\
        [c_{S_d(k)},c_{S_d(k+d-r+1)}] & \text{ if } r \neq 0.
    \end{cases}    
\end{equation}

Alternatively, if we can write $k = j + nd$ with $j = 1, \ldots, d$ and $n \in \N_0$, then 
\begin{equation}
    \label{eq:I_k_short_def}
    I_k = [c_{S_d(k)}, c_{S_d(k-j+d+1)}].
\end{equation}

As an example, for $d = 2$ we get 
\[ I_k = 
\begin{cases}
    [c_{S_2(k)}, c_{S_2(k+1)}] & \text{ if } k \equiv 0 \pmod{2} \\
    [c_{S_2(k)}, c_{S_2(k+2)}] & \text{ if } k \equiv 1 \pmod{2}.
\end{cases} \]
And for $d = 3$ we get 
\[I_k=\begin{cases}
~[c_{S_3(k)},c_{S_3(k+1)}] & \text{ if } k\equiv 0 \pmod{3} \\
~[c_{S_3(k)},c_{S_3(k+2)}] & \text{ if } k\equiv 2 \pmod{3} \\
~[c_{S_3(k)},c_{S_3(k+3)}] & \text{ if } k\equiv 1 \pmod{3} 
\end{cases}.
\]

By the definition of $I_k$, we have that
for every $k \in \N$, $I_k^1 = H_{S_d(k)+1}$, where $H_n$ denotes the Hofbauer tower intervals defined in \cref{def:Hofbauer_tower}.
In particular, for every $k \geq d$ and $1 \le j < S_d(Q_d(k+1))$,  we have \[I_k^j = H_{S_d(k)+j}.\] 
By \cref{eq:cut_time} and \cref{eq:cut_times_formula}, we have $c \in [c_{S_d(k)+1},c_1]$ for every $0 \le k < d$.

\begin{lemma}\label{lem: injectivity}
    For every $k \ge d$ and every $j \in \{1,2,\ldots, S_d(Q_d(k+1))-1\}$, we have
    that $f^j$ is injective on $ \left [ c_{S_d(k)+1}, c_1 \right ]$.
    In particular, we have $ c \notin I_k^{j}= \left[ c_{S_d(k)+j}, c_{j} \right]$, and 
    \[ c \in I_{k}^{S_d(Q_d(k+1))}= \left[ c_{S_d(Q_d(k+1))}, c_{S_d(k+1)}\right].\]
\end{lemma}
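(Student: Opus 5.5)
Since $k \ge d$, we have $I_k^1 = H_{S_d(k)+1} = [c_{S_d(k)+1}, c_1]$. The plan is to read off injectivity directly from the Hofbauer tower \cref{def:Hofbauer_tower}. The key input is that the next cutting time after $S_d(k)$ is
\[
S_d(k+1) = S_d(k) + S_d(Q_d(k+1)),
\]
so no index $n$ with $S_d(k) < n < S_d(k+1)$ is a cutting time. For such $n$, by definition $c \notin H_n$ and $H_{n+1} = f(H_n)$.

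First we show by induction on $j$ that
\[
f^{j-1}\bigl([c_{S_d(k)+1},c_1]\bigr) = H_{S_d(k)+j}
\quad\text{for } 1 \le j \le S_d(Q_d(k+1)),
\]
and that $f^{j-1}$ is injective on $[c_{S_d(k)+1},c_1]$. The base case $j=1$ uses that $S_d(k)$ is a cutting time, so $H_{S_d(k)+1} = [c_{S_d(k)+1}, c_1]$. For the inductive step with $j \le S_d(Q_d(k+1))-1$, the index $n = S_d(k)+j$ lies strictly between $S_d(k)$ and $S_d(k+1)$. Hence $c \notin H_n$, so $f$ is monotone, hence injective, on $H_n$, and $f(H_n) = H_{n+1}$. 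Composing gives injectivity of $f^{j}$ for every $j \le S_d(Q_d(k+1))-1$, which is the first claim. Continuity and monotonicity of $f^j$ then identify the image interval by its endpoints:
\[
I_k^{j} = f^{j-1}(H_{S_d(k)+1}) = [c_{S_d(k)+j}, c_j],
\]
and $c \notin I_k^j = H_{S_d(k)+j}$ for $1 \le j < S_d(Q_d(k+1))$.

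Finally, at $j = S_d(Q_d(k+1))$, the same identification gives
\[
I_k^{j} = H_{S_d(k+1)} = [c_{S_d(k+1)}, c_{S_d(Q_d(k+1))}],
\]
using the endpoint computation $f^{j-1}(c_1) = c_j$ together with $S_d(k)+j = S_d(k+1)$. This interval contains $c$ because $S_d(k+1)$ is a cutting time.

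The one point to check carefully is the identification $I_k^1 = H_{S_d(k)+1}$. Here $I_k$ is the hull of $\{c_{S_d(j)} : j \ge k\}$, whose extreme points are $c_{S_d(k)}$ and $c_{S_d(k')}$ for the $k'$ given in \cref{eq:I_k_short_def}. Since these points lie on opposite sides of $c$, the image $f(I_k)$ is $[\min(f(\text{endpoints})), c_1]$. The paper has already asserted this identification, and by \cref{lem:close_return_to_crit_value} the lower endpoint is $c_{S_d(k)+1}$, so I would simply invoke it. Beyond this, the main obstacle is bookkeeping: making sure no cutting time occurs strictly between $S_d(k)$ and $S_d(k+1)$, which holds by the definition of consecutive cutting times. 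I expect no genuine difficulty.
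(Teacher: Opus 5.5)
Your argument is correct and is essentially the paper's proof: both push $[c_{S_d(k)+1},c_1]$ forward one step at a time up to time $S_d(Q_d(k+1))$, using that nothing "cuts" strictly between the consecutive cutting times $S_d(k)$ and $S_d(k+1)$. The difference is cosmetic. You certify $c\notin H_{S_d(k)+j}$ directly from the Hofbauer-tower definition of cutting times, whereas the paper reads it off the kneading block rule \cref{eq:cut_time} ($c_{S_d(k)+j}$ and $c_j$ lie on the same side of $c$). Your explicit induction also spells out the monotonicity of $f^{j-1}$ on $[c_{S_d(k)+1},c_1]$, which the paper's one-line justification leaves implicit.
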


\begin{proof}
Since $k \ge d$, $Q_d(k+1) = k+1-d$.
For $1 \le j < S_d(Q_d(k+1))$, \cref{eq:cut_time} implies that $c_{S_d(k)+j}$ and $c_j$ lie on the same side with respect to $c$. Hence $c \notin [c_{S_d(k)+j}, c_j]$, so $f^j$ is monotone (and therefore injective) on $[c_{S_d(k)+j}, c_j]$.
In particular, for $1 \le j < S_d(Q_d(k+1))$, 
\[I_k^j= f^{j-1}\left[ c_{S_d(k)+1},c_1\right]=\left[ c_{S_d(k)+j}, c_j\right],\]
as required.

Finally, from the above with $j = S_d(Q_d(k+1))$, we obtain 
\[ I_{k}^{S_d(Q_d(k+1))}=[ c_{S_d(k)+S_d(Q_d(k+1))}, c_{S_d(Q_d(k+1))}  ] = [ c_{S_d(k+1)}, c_{S_d(Q_d(k+1))} ]. \]
Then, it follows from \cref{eq:cut_time} that $c \in I_k^{S_d(Q_d(k+1))}$.
\end{proof}

\begin{lemma}\label{lem: distance with c}
For every $k \ge 1$, we have

\begin{equation}\label{eq: distance with c}
\|c_i - c\| > \|c_{S_d(k)}-c\|
\end{equation}
for all $0 < i < S_d(k)$.
\end{lemma}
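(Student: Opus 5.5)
We argue by induction on $k$, using the Hofbauer tower structure. The guiding principle: if $0<i<S_d(k)$ and $\|c_i-c\|\le\|c_{S_d(k)}-c\|$, then the interval $[c_i^-,c_i^+]$ is contained in $[c_{S_d(k)}^-,c_{S_d(k)}^+]$, and we aim to show that $c_i$ would then force $i$ to be a cutting time with index at least $k$, a contradiction. The key tool is the following standard fact about cutting times. For $S_d(j-1)<n\le S_d(j)$, the interval $H_n$ has endpoints $c_n$ and $c_{n-S_d(j-1)}$; also $c\notin H_n$ for $n<S_d(j)$, while $c\in H_{S_d(j)}$. By \cref{eq:max_returns_to_c}, the closest approaches to $c$ along the cutting times are strictly decreasing in $j$. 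It therefore suffices to treat non-cutting times $i$, and cutting times $i=S_d(j)$ with $j<k$.

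For cutting times, \cref{eq:max_returns_to_c} iterated from $j$ to $k$ gives $\|c_{S_d(j)}-c\|>\|c_{S_d(k)}-c\|$ directly. Now take a non-cutting time $i$ with $S_d(j-1)<i<S_d(j)\le S_d(k)$. Since $c\notin H_i$ and $H_i=[c_i,c_{i-S_d(j-1)}]$, the points $c_i$ and $c_{i-S_d(j-1)}$ lie on the same side of $c$. We want the stronger statement that $c_i$ is farther from $c$ than $c_{S_d(j)}$, in the $\|\cdot\|$ sense. My plan is to argue by contradiction. Suppose $\|c_i-c\|\le\|c_{S_d(j)}-c\|$, and set $m:=S_d(j)-i$, so $0<m<S_d(Q_d(j))$. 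Look at the symmetric interval $U=[c_{S_d(j)}^-,c_{S_d(j)}^+]$. Then $c_i\in U$, and $f(U)=[c_{S_d(j)+1},c_1]=H_{S_d(j)+1}$. By \cref{lem: injectivity} (or the case $j\le d$ by the explicit prefix \cref{eq:prefix_kneading_invariant}), $f^{t}$ is monotone on $H_{S_d(j)+1}$ for $t<S_d(Q_d(j+1))$. Hence $f^{t+1}$ is monotone on each half of $U$, and its image does not contain $c$ for those $t$.

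The contradiction is obtained by comparing orbits. The point $c_i$ lies in $U$, and after $m$ further iterates it lands at $c_{S_d(j)}$, which is very close to $c$. Meanwhile $f^m(U)\supset[c_{S_d(j)+m},c_m]$, and $c_m$ is the image under $f^m$ of the endpoint's partner. Here we use $m<S_d(Q_d(j))\le S_d(Q_d(j+1))$, so $f^m(U)$ omits $c$ and is the interval $H_{S_d(j)+m}$ (or a subinterval of $H_m$ with endpoint $c_m$). But $c_{i+m}=c_{S_d(j)}$ would then satisfy $\|c_{S_d(j)}-c\|\ge$ the distance of $H_{S_d(j)+m}$ from $c$. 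Combining this with the inductive hypothesis applied to $m<S_d(j)$, which gives $\|c_m-c\|>\|c_{S_d(Q_d(j))}-c\|$, we would get an inequality contradicting the nesting $H_{S_d(j)+1}\subset H_{S_d(j-1)+1}$ from \cref{lem:close_return_to_crit_value}. Once the claim holds for all $i<S_d(j)$, chaining \cref{eq:max_returns_to_c} from $j$ up to $k$ yields \cref{eq: distance with c}.

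The main obstacle is making this last contradiction precise. The issue is orientation: points can sit on either side of $c$, which is why the statement uses $\|\cdot\|$. What I would try first is a cleaner formulation. Namely, show that $U_j:=\{x:\|x-c\|<\|c_{S_d(j)}-c\|\}$ has first entry time of $c_1$ exactly $S_d(j)-1$, in the sense that $f^t(c_1)\notin U_j$ for $0\le t<S_d(j)-1$. This follows if the pullback of $U_j$ along the critical orbit, namely $f^{-n}$ restricted to the branch containing $c$, is contained in $H_n$ for $S_d(j-1)<n<S_d(j)$, and the containment is proved by induction on $j$ via the block structure \cref{eq:cut_time}. The small cases $k\le d$ are checked directly from \cref{eq:prefix_kneading_invariant} and the monotonicity of $f$ on $(-1,c)$.
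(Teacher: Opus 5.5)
There is a genuine gap, and it is exactly at the step you flag as the main obstacle. Your reduction to non-cutting times $i\in(S_d(j-1),S_d(j))$ is fine, and so is the push-forward. With $m=S_d(j)-i$, $f^m$ maps the half of $U$ containing $c_i$ homeomorphically onto $H_{S_d(j)+m}=[c_{S_d(j)+m},c_m]$, which does not contain $c$. So $c_{S_d(j)}$ lies between $c_{S_d(j)+m}$ and $c_m$, and one of these two endpoints is strictly closer to $c$ than $c_{S_d(j)}$.

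The inductive hypothesis (at level $Q_d(j)$, together with \cref{eq:max_returns_to_c}) rules out $c_m$. Nothing in your argument rules out the other endpoint, i.e.\ the possibility that $c_{S_d(j)+m}$ lies strictly between $c$ and $c_{S_d(j)}$. This is a point of the \emph{next} block $(S_d(j),S_d(j+1))$, so no induction running forward in time controls it. Even the lemma at level $j+1$ only gives $\|c_{S_d(j)+m}-c\|>\|c_{S_d(j+1)}-c\|$, which is compatible with $\|c_{S_d(j)+m}-c\|<\|c_{S_d(j)}-c\|$.

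The appeal to \cref{lem:close_return_to_crit_value} does not close this. Pushing $H_{S_d(j)+1}\subset H_{S_d(j-1)+1}$ forward by $f^{m-1}$ only yields $c_{S_d(j)}\in[c_{S_d(j-1)+m},c_m]$. So $i_2=S_d(j-1)+m$ is another bad index in the same block, with $S_d(j)-i_2=S_d(Q_d(j))-m$. Running the argument on $i_2$ returns $i$, so the reasoning cycles without producing a contradiction. Your ``cleaner formulation'' (first entry of $c_1$ into $U_j$, containment of pull-backs in $H_n$) restates the claim; that containment is precisely what has to be proved.

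What is missing is a way to exploit the orbit \emph{beyond} the cutting time. The paper does this as follows:
\begin{enumerate}
\item It takes an extremal counterexample: the largest $k$ for which the lemma holds at all levels $\le k$, and the smallest bad time $S_d(k)+j$ in $(S_d(k),S_d(k+1))$.
\item It locates $c_{S_d(k+1)+j}$ inside $(c_{S_d(k)+j},c_j)$.
\item It splits into cases according to the ordering of $\|c_{S_d(k)+j}-c\|$, $\|c_{S_d(k+1)+j}-c\|$, $\|c_j-c\|$ relative to $\|c_{S_d(k+1)}-c\|$ and $\|c_{S_d(k)}-c\|$.
\item In each case it pushes the relevant intervals forward by monotone iterates such as $f^{S_d(Q_d(k+1))-j}$ or $f^j$ (\cref{lem: injectivity}). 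It then uses the block rule \cref{eq:cut_time} at times like $S_d(k+1)+S_d(Q_d(k+1))$ to force $c$ into an interval that cannot contain it.
\end{enumerate}
Some argument of this kind, reaching past $S_d(j)$, is needed to dispose of the $c_{S_d(j)+m}$ endpoint in your set-up.
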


\begin{proof} 
For $1 \le k \le d$, $S_d(k)=k+1$, and thus \cref{eq: distance with c} holds by 
\cref{eq:max_returns_to_c}. 

In the remaining cases, we argue by contradiction, using the nesting of the Hofbauer tower and monotonicity to pull back an assumed closer return to a contradiction with the kneading block rule.
Suppose the result is not true, and let $k \in \N$ be maximal so that \cref{eq: distance with c}
holds for every $n \leq k$. Thus $k \ge d$, and for every $1 \le i < S_d(k)$,
\[ \|c_i-c\| > \|c_{S_d(k)}-c\| > \|c_{S_d(k+1)}-c\|. \] In addition, there exists $S_d(k) < i' < S_d(k+1)$ such that 
\[ \|c_{i'} -c\| \le \|c_{S_d(k+1)}-c\|. \] Let $i'$ be the minimal satisfying the above. 
We can write $i' = S_d(k) + j$ with $1 \le j < S_d(Q_d(k+1))$, and thus
\begin{equation}
    \label{eq:contradiction}
    \|c_{S_d(k)+j} -c \| \le \|c_{S_d(k+1)} -c \|.
\end{equation}
In this case, by \cref{eq:max_returns_to_c} and \cref{lem: injectivity}, we must have 
\[ c \notin (c_{S_d(k)+j},c_j) \qquad \text{ and } \qquad c_{S_d(k+1)+j} \in (c_{S_d(k)+j},c_j). \]
Thus, 
\begin{equation}
    \label{eq:case_1_cls_rtrn_lem}
    \|c_{S_d(k)+j}-c\| > \|c_{S_d(k+1)+j}-c\| > \|c_j-c\|,
\end{equation}
or
\begin{equation}
    \label{eq:case_2_cls_rtrn_lem}
    \|c_j-c\| > \|c_{S_d(k+1)+j}-c\| > \|c_{S_d(k)+j}-c\|.
\end{equation}
If \cref{eq:case_1_cls_rtrn_lem} holds, then, by \cref{eq:contradiction}, and 
\cref{eq: distance with c}, we would have 
\[ \|c_{S_d(k+1)}-c\| \ge \|c_{S_d(k)+j}-c\| > \|c_{S_d(k+1)+j}-c\| > \|c_j-c\| > \|c_{S_d(k)}-c\|, \]
contradicting \cref{eq:max_returns_to_c}.

Now, if \cref{eq:case_2_cls_rtrn_lem} holds, we must have either 
\begin{equation}
    \label{eq:case_2.1_cls_rtrn_lem}
    \|c_j-c\| \ge \|c_{S_d(k+1)} - c\| > \|c_{S_d(k+1)+j} -c\| > \|c_{S_d(k)+j}-c\|,
\end{equation}
or
\begin{equation}
    \label{eq:case_2.2_cls_rtrn_lem}
    \|c_j-c\| > \|c_{S_d(k+1)+j} -c\| > \|c_{S_d(k+1)} - c\| > \|c_{S_d(k)+j}-c\|.
\end{equation}
If \cref{eq:case_2.1_cls_rtrn_lem} holds, we have 
\[ c_{j+1} < c_{S_d(k+1)+1} < c_{S_d(k+1)+j+1} < c_{S_d(k)+j+1} < c_1. \] Applying 
$f^{S_d(Q_d(k+1))-j-1}$ to the interval $(c_{j+1},c_1)$, we obtain
\begin{align*}
    c_{S_d(k+1)+S_d(Q_d(k+1))-j}, c_{S_d(k+1)+S_d(Q_d(k+1))}, c_{S_d(k+1)} &\in (c_{S_d(Q_d(k+1))}, c_{S_d(Q_d(k+1))-j}), \\
    c_{S_d(k+1)+S_d(Q_d(k+1))-j}, c_{S_d(k+1)+S_d(Q_d(k+1))} &\in (c_{S_d(Q_d(k+1))}, c_{S_d(k+1)}), \text{ and } \\
    c_{S_d(k+1)+S_d(Q_d(k+1))-j} &\in (c_{S_d(Q_d(k+1))},c_{S_d(k+1)+S_d(Q_d(k+1))}).
\end{align*}
By \cref{eq:cut_time} and \cref{lem: injectivity}, we have 
\begin{align*}
    c &\notin (c_{S_d(Q_d(k+1))}, c_{S_d(k+1)+S_d(Q_d(k+1))}), \text{ and }\\
    c &\notin (c_{S_d(k+1)+S_d(Q_d(k+1))-j}, c_{S_d(Q_d(k+1))-j}).
\end{align*}
Thus, $c \notin (c_{S_d(Q_d(k+1))}, c_{S_d(Q_d(k+1))-j})$. On the other hand, by \cref{eq:cut_time},
$c_{S_d(Q_d(k+1))}$ and $c_{S_d(k+1)}$ are on opposite sides with respect to $c$, and thus 
\[ c \in (c_{S_d(Q_d(k+1))}, c_{S_d(k+1)}) \subset (c_{S_d(Q_d(k+1))}, c_{S_d(Q_d(k+1))-j}), \] a contradiction.

Finally, suppose that \cref{eq:case_2.2_cls_rtrn_lem} holds. By 
\cref{eq:max_returns_to_c}, and since \cref{eq: distance with c} holds for $k$,
we have that 
\[ |c_j-c| > |c_{S_d(k)} -c| > |c_{S_d(k+1)+j} - c| \qquad \text{ or } \qquad 
|c_{S_d(k+1)+j} - c| > |c_{S_d(k)} -c| > |c_{S_d(k+1)}-c|. \]
In the former case, we have 
\[ c_{j+1} < c_{S_d(k)+1} < c_{S_d(k+1)+j+1} < c_{S_d(k+1)+1} < c_{S_d(k)+j+1} < c_1. \] 
By \cref{lem: injectivity}, $f^{S_d(Q_d(k+1))-j}$ is monotone on $(c_{j+1},c_{S_d(k)+j+1})$ and 
on $(c_{S_d(k)+1},c_1)$, so it maps the interval $(c_{j+1},c_1)$ homeomorphically onto the 
interval $(c_{S_d(Q_d(k+1))-j}, c_{S_d(Q_d(k+1))})$ with 
\[ \{c_{S_d(k+1)}, c_{S_d(k+1)+S_d(Q_d(k+1))-j,} c_{S_d(k+1)+S_d(Q_d(k+1))}, c_{S_d(k+1)-j} \} \subset  
(c_{S_d(Q_d(k+1))-j}, c_{S_d(Q_d(k+1))}),\] and 
\begin{equation*}
    \begin{split}
        |c_{S_d(k)+S_d(Q_d(k+1))}-c_{S_d(Q_d(k+1))-j}| &< |c_{S_d(k+1)+S_d(Q_d(k+1))-j} -c_{S_d(Q_d(k+1))-j}| \\ 
    &< |c_{S_d(k+1)+S_d(Q_d(k+1))} - c_{S_d(Q_d(k+1))-j}| \\
    &< |c_{S_d(k+1)-j} - c_{S_d(Q_d(k+1))-j}|.
    \end{split}
\end{equation*}
By \cref{eq:cut_time}, $c_{S_d(Q_d(k+1))-j}$ and $c_{S_d(k)+S_d(Q_d(k+1))-j} = c_{S_d(k+1)-j}$ are on the
same side with respect to $c$, and $c_{S_d(k+1)+S_d(Q_d(k+1))}$ and $c_{S_d(Q_d(k+1))}$ are on the same
side with respect to $c$. Since  \[c_{S_d(k+1)+S_d(Q_d(k+1))} \in (c_{S_d(Q_d(k+1))-j}, c_{S_d(k+1)-j}),\] we 
conclude that $c \notin (c_{S_d(Q_d(k+1))-j}, c_{S_d(Q_d(k+1))})$. On the other hand, 
$c_{S_d(k+1)}$ and $c_{S_d(Q_d(k+1))}$ are on opposite sides with respect to $c$, thus
\[ c  \in (c_{S_d(k+1)}, c_{S_d(Q_d(k+1))}) \subset (c_{S_d(Q_d(k+1))-j}, c_{S_d(Q_d(k+1))}), \] a contradiction.

In the latter case, we have 
\[ c_{j+1} < c_{S_d(k+1)+j+1} < c_{S_d(k)+1} < c_{S_d(k+1)+1} < c_{S_d(k)+j+1} < c_1. \]
By \cref{lem: injectivity}, $f^{S_d(Q_d(k+1))-j}$ maps the interval $(c_j^-,c)$ (or the interval $(c,c_j^+)$) 
homeomorphically onto $(c_{S_d(Q_d(k+1))}, c_{S_d(Q_d(k+1))-j})$, with 
\[ c_{S_d(k+1)}, c_{S_d(k+1)+S_d(Q_d(k+1))-j}, c_{S_d(k+1)+S_d(Q_d(k+1))}, c_{S_d(k)+S_d(Q_d(k+1))-j} \in  
(c_{S_d(Q_d(k+1))-j}, c_{S_d(Q_d(k+1))}),\] and 
\begin{align}
    \label{eq:later_case_order}
    \begin{split}
        |c_{S_d(k)+S_d(Q_d(k+1))}-c_{S_d(Q_d(k+1))}| &< |c_{S_d(k)+S_d(Q_d(k+1))-j} -c_{S_d(Q_d(k+1))}| \\ 
    &< |c_{S_d(k+1)+S_d(Q_d(k+1))-j} - c_{S_d(Q_d(k+1))}| \\
    &< |c_{S_d(k+1)} - c_{S_d(Q_d(k+1))}|.
    \end{split}
\end{align}
By \cref{eq:cut_time}, 
\[c \notin (c_{S_d(Q_d(k+1))}, c_{S_d(k+1)+S_d(Q_d(k+1))}) \cup (c_{S_d(k)+S_d(Q_d(k+1))-j}, c_{S_d(Q_d(k+1))-j}),\] 
and 
\[c \in (c_{S_d(Q_d(k+1))},c_{S_d(k+1)}).\] Thus, 
\begin{equation}
    \label{eq:c_contention}
    c \in (c_{S_d(k+1)+S_d(Q_d(k+1))},c_{S_d(k)+S_d(Q_d(k+1))-j}).
\end{equation}
By \cref{lem: injectivity}, $f^j$ maps the interval $(c_{S_d(k+1)+S_d(Q_d(k+1))-j},c_{S_d(Q_d(k+1))-j})$ homeomorphically onto
the interval $(c_{S_d(k+1)+S_d(Q_d(k+1))},c_{S_d(k)+S_d(Q_d(k+1))})$, and by our initial assumption, we have 
\[ |c_{S_d(k+1)}-c| < |c_{S_d(k)}-c| < |c_{S_d(Q_d(k+1))-j}-c|,\] then we must have 
\[ c_{S_d(k)+j} \in (c_{S_d(k+1)+j},c_{S_d(Q_d(k+1))}) \subset (c_{S_d(k+1)+S_d(Q_d(k+1))},c_{S_d(Q_d(k+1))}). \] 
By \cref{eq:later_case_order} and \cref{eq:c_contention}, this implies that 
\[ |c_{S_d(k+1)+j} -c| < |c_{S_d(k)+j} -c|, \] contradicting \cref{eq:case_2_cls_rtrn_lem}.
\end{proof}

\begin{lemma}
    \label{lem:I_k_disjoint}
    For every $k \ge 2d-1$, the intervals \[ I_k, \ldots, I_k^{S_d(Q_d(k+1))-1} \] are pairwise disjoint.
\end{lemma}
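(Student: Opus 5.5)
We argue by contradiction, in the standard "first collision" style, using \cref{lem: injectivity} and \cref{lem: distance with c}. Fix $k \ge 2d-1$ and write $q \= S_d(Q_d(k+1)) = S_d(k+1-d)$. Suppose two of the intervals $I_k^{a}$, $I_k^{b}$ with $0 \le a < b \le q-1$ intersect.

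\textbf{Reduction to $a=0$.} By \cref{lem: injectivity}, $f^{j}$ is a homeomorphism from $I_k^1=[c_{S_d(k)+1},c_1]$ onto $I_k^{j+1}$ for $j < q-1$, and none of $I_k^1,\dots,I_k^{q-1}$ contains $c$. We would like to push an intersection $I_k^a\cap I_k^b \neq \emptyset$ backwards to an intersection $I_k^{0}\cap I_k^{b-a}\ne\emptyset$. Since $f$ is not injective globally, we do this instead by a forward argument. Take the pair with $b-a=m$ minimal among all intersecting pairs, and among those take $a$ maximal. If $a \ge 1$, apply $f$. Both intervals avoid $c$ when $a\ge1$, so $f$ is monotone on each, and $I_k^{a+1}\cap I_k^{b+1}\ne\emptyset$ provided $b+1\le q-1$. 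Iterating, we reach either a pair whose top index is $q-1$, or the case $a=0$.

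In the first case we apply $f$ once more. This gives $I_k^{q}\ni c$ intersecting $I_k^{q-m}$. Since $I_k^{q}=[c_{S_d(Q_d(k+1))},c_{S_d(k+1)}]$ is a small neighbourhood of $c$, this forces $I_k^{q-m}$ to come within $\|c_{S_d(q')}-c\|$ of $c$. Both $I_k^{q-m}$ and $I_k^q$ lie on the orbit of the interval $[c_{S_d(k)+1},c_1]$, so everything reduces to showing that some iterate $c_i$ with $0<i<S_d(Q_d(k+1))$ is closer to $c$ than allowed. More precisely, an endpoint $c_{q-m}$ or $c_{S_d(k)+q-m}$ of $I_k^{q-m}$ must lie in $I_k^{q}$ or on the far side of $c$.

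\textbf{Closest-return contradiction.} The endpoints of $I_k^j$ are $c_j$ and $c_{S_d(k)+j}$. If $c\in I_k^j\cup I_k^q$ is reached with the wrong index, we compare with \cref{lem: distance with c}. For $0<j<S_d(k')$ that lemma gives $\|c_j-c\|>\|c_{S_d(k')}-c\|$. We apply it with $k'=Q_d(k+1)$ for the endpoint $c_j$, and with $k'=k+1$ for the endpoint $c_{S_d(k)+j}$, since $S_d(k)+j<S_d(k+1)$. This shows both endpoints of $I_k^{j}$, for $0<j<q$, are strictly farther from $c$ (in the $\|\cdot\|$ sense) than the corresponding endpoints of $I_k^{q}$ and of $I_k=[c_{S_d(k)},c_{S_d(k-r+d+1)}]$.

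For the base case $a=0$ we need $I_k$ and $I_k^{m}$ disjoint. Here $I_k$ contains $c$ and has endpoints among the returns $c_{S_d(j)}$ with $j\ge k$. On the other hand, $I_k^m$ is an interval not containing $c$ whose endpoints are farther from $c$ than $\|c_{S_d(Q_d(k+1))}-c\|$. The hypothesis $k\ge 2d-1$ gives $Q_d(k+1)\ge d$, and we use it to ensure that $\|c_{S_d(Q_d(k+1))}-c\|$ dominates the half-width of $I_k$, which is $\max\{\|c_{S_d(k)}-c\|,\|c_{S_d(k-r+d+1)}-c\|\}$, using \cref{eq:max_returns_to_c}. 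Then $I_k^m$ lies on one side of $c$, outside $\tau$-symmetrised $I_k$, hence is disjoint from $I_k$.

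\textbf{Main obstacle.} The delicate step is this last comparison, together with the handling of the asymmetric map, which makes $\|\cdot\|$ and actual positions differ. The issue is that an interval not containing $c$ whose endpoints have large $\|\cdot\|$ could still overlap $I_k$ if $I_k$ is lopsided. I would first try to show $I_k\subset[c_{S_d(k)}^-,c_{S_d(k)}^+]$ directly from \cref{eq:I_k_interval} and \cref{eq:max_returns_to_c}. Failing that, I would fall back on a case analysis on the side of $c$ of each endpoint, imitating the pullback arguments in the proof of \cref{lem: distance with c}. In those arguments an overlap transported by $f^{q-m}$ produces $c$ between two points that \cref{eq:cut_time} forces to lie on the same side.
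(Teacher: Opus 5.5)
Your forward reduction is legitimate. Indeed $f(A\cap B)\subseteq f(A)\cap f(B)$, so any intersecting pair, even one with $a=0$, pushes forward to $I_k^{q-m}\cap I_k^{q}\neq\emptyset$ for some $1\le m\le q-1$, with no monotonicity needed. But neither of your two cases is actually closed.

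\textbf{The missing estimate.} The only endpoint that can cause trouble is $c_{S_d(k)+j}$. Its index lies in $(S_d(k),S_d(k+1))$, not below $S_d(Q_d(k+1))$, and \cref{lem: distance with c} gives for it only $\|c_{S_d(k)+j}-c\|>\|c_{S_d(k+1)}-c\|$. Comparing it with the ``corresponding'' endpoint proves nothing. On one side of $c$, $I_k^{q}=[c_{S_d(Q_d(k+1))},c_{S_d(k+1)}]$ reaches out to $c_{S_d(Q_d(k+1))}$ and $I_k$ reaches out to $c_{S_d(k)}$. Nothing you prove keeps $c_{S_d(k)+j}$ out of that range.

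In the base case you simply assert that both endpoints of $I_k^{m}$ are farther from $c$ than $\|c_{S_d(Q_d(k+1))}-c\|$. This cannot follow from \cref{lem: distance with c}, which holds for all $k$, because it fails for smaller $k$. Take $d\ge3$ and $d+1\le k\le 2d-2$, so $q=k+2-d\in[3,d]$. Then $c_{q-1},c_q,c_{q+1}$ and, by \cref{lem: injectivity}, $c_{S_d(k)+q-1}$ all lie left of $c$. Since $f(c_{S_d(k)+q-1})=c_{S_d(k+1)}>c>c_{q+1}=f(c_q)$ and $f$ is increasing on the left, we get $c_q<c_{S_d(k)+q-1}<c$. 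So that endpoint lies inside $I_k^{q}$, and your Case A target $I_k^{q-1}\cap I_k^{q}=\emptyset$ is false there.

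Nothing in your argument distinguishes these $k$ from $k\ge 2d-1$. The one place you invoke the hypothesis, $\|c_{S_d(Q_d(k+1))}-c\|>\|c_{S_d(k)}-c\|$, holds for every $k$ by \cref{eq:max_returns_to_c}. Since the lemma itself fails for $d+2\le k\le 2d-2$ (see the Remark following it), an argument in which $k\ge 2d-1$ does no work cannot be complete.

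\textbf{What the paper does instead.} The paper controls the troublesome endpoint by a push-forward rather than a distance comparison. Suppose $c_{S_d(k)+m}\in(c,c_{S_d(k)})$ for some $1\le m<q$. Then $f^{q-m}$ is monotone on the interval $[c,c_{S_d(k)}]\cup I_k^{m}$, maps $[c,c_{S_d(k)}]$ onto $I_k^{q-m}$, and sends $c_{S_d(k)+m}$ to $c_{S_d(k+1)}$. Since $c\notin I_k^{q-m}$, one of its endpoints $c_{q-m}$, $c_{S_d(k+1)-m}$ is at least as close to $c$ as $c_{S_d(k+1)}$, contradicting \cref{lem: distance with c}. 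Running this on both sides of $c$ gives $I_k^{m}\cap[c^-_{S_d(k)},c^+_{S_d(k)}]=\emptyset$.

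That settles your base case, because $I_k\subseteq[c^-_{S_d(k)},c^+_{S_d(k)}]$ by \cref{eq:I_k_interval} and \cref{eq:max_returns_to_c}. So the lopsidedness of $I_k$ you worry about is harmless. It does not settle Case A, because $I_k^{q}$ extends far beyond $[c^-_{S_d(k)},c^+_{S_d(k)}]$.

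The paper never meets Case A. It pulls an intersection $I_k^{i}\cap I_k^{j}$ back to $[c,c_{S_d(k)}]\cap I_k^{j-i}$ and then applies the device. That pull-back has to get past the fold of $f$, since a point has two preimages on opposite sides of $c$. The device itself works for every $k\ge d$, so it is in the pull-back that $k\ge 2d-1$ has to enter.

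You therefore need either a careful pull-back of that kind, or an independent proof that $I_k^{p}\cap I_k^{q}=\emptyset$ for $1\le p<q$ when $k\ge 2d-1$. One possible route is induction on $k$ through the tower decomposition coming from \cref{lem: interval}. As written, no step of your proof excludes $c_{S_d(k)+j}\in I_k^{q}$.
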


\begin{proof}
    We prove this lemma by contradiction. Suppose that there are $k \ge 2d-1$, and 
    integers $0 \le i < j < S_d(Q_d(k+1))$ so that \[ I_k^i \cap I_k^j \neq \emptyset. \] Since $j < S_d(Q_d(k+1))$, and $f(I_k) = f([c_{S_d(k)},c])$, \cref{lem: injectivity} implies that $f^j$ is monotone on $[c_{S_d(k)},c]$ and hence it maps $[c,c_{S_d(k)}]$ homeomorphically onto $I_k^j$. Then, we must have $[c,c_{S_d(k)}] \cap I_k^{j-i} \neq \emptyset$. Since $1 \le j-i < S_d(Q_d(k+1))$, \cref{lem: injectivity} implies that $c \notin I_k^{j-i}$ and thus one of the boundary points $c_{j-i}$, $c_{S_d(k) + j-i}$ belongs to $(c, c_{S_d(k)})$, and \cref{lem:close_return_to_crit_value} implies that $c_{j-i} \notin (c, c_{S_d(k)})$. Thus, we must have $c_{S_d(k) + j-i} \in (c,c_{S_d(k)})$. Using again that  $1 \le j-i < S_d(Q_d(k+1))$, the map $f^{S_d(Q_d(k+1))- (j-i)}$ maps $[c,c_{S_d(k)}] \cup I_k^{j-i}$ homeomorphically onto $I_k^{S_d(Q_d(k+1))-(j-i)} \cup I_k^{S_d(Q_d(k+1))}$, with \[c_{S_d(k+1)} \in I_k^{S_d(Q_d(k+1))-(j-i)} = [c_{S_d(Q_d(k+1))-(j-i)}, c_{S_d(k+1)-(j-i)}]. \] By \cref{lem: injectivity}, $c \in I_k^{S_d(Q_d(k+1))}$ and $c \notin I_k^{S_d(Q_d(k+1))-(j-i)}$, thus \[ |c_{S_d(Q_d(k+1))-(j-i)} - c| < |c_{S_d(k+1)} -c| \qquad \text{ or } \qquad |c_{S_d(k+1)-(j-i)} - c| < |c_{S_d(k+1)} -c|, \] contradicting \cref{lem:close_return_to_crit_value}. 
\end{proof}

\begin{remark}
    In the case $d=2,3$, the intervals are disjoint for every $k \ge 1$. For $d \ge 4$, if $k < 2d-1$, then $k+1-d < d$
    and thus $S_d(Q_d(k+1)) = Q_d(k+1)+1 < d+1$. Since $\e_2=\e_3 = \ldots = \e_{d+1} = 0$, this implies that 
    the intervals $I_k^2 , \ldots, I_k^{S_d(Q_d(k+1))-1}$ are to the left with respect to $c$, and by 
    \cref{lem: injectivity} $c \in I_k^{S_d(Q_d(k+1))}$. Now, if the intervals $I_k^2 , \ldots, I_k^{S_d(Q_d(k+1))-1}$ 
    are pairwise disjoint, then as $f$ is increasing to the left of $c$, we should have $c_i < c_{S_d(k)+i} < c_{i+1}$
    for every $i = 2, 3, \ldots, S_d(Q_d(k+1))-1$, but this implies that 
    \[ c \notin I_k^{S_d(Q_d(k+1))} = [c_{S_d(Q_d(k+1))}, c_{S_d(k+1)} ], \] 
    since $Q_d(k+1) < d$. 
\end{remark}

Now, we will construct the intervals $J_{1,k}, \ldots, J_{d,k}$ from \cref{theo:covers_of_the_omega_set}.  
Recall that, by \cref{eq:I_k_interval}, for $k \geq 0$, we have \[ I_k = [c_{S_d(k)},c_{S_d(k+j)}] ,\]
where $j \in \{ 1, \ldots, d \}$ is so that $k+j \equiv 1 \pmod{d}$ and $j$ is the smallest index such that 
$c_{S_d(k)}$ and $c_{S_d(k+j)}$ are on opposite sides with respect to $c$, see \cref{eq:I_k_short_def}.

Given $k \ge 1$, we have that the set \[I_{k+d}^{S_d(k)}=\left[c_{S_d(k)},c_{S_d(k)+S_d(k+d)} \right]\]
has a common boundary point with $I_k$. 

\begin{lemma}\label{lem: interval} For $k \ge 0$, we have
\begin{equation}
    \label{lem: interval eq 1}
    I_{k+d}^{S_d(k)} \subset I_{k},
\end{equation}
and for $k \ge d-1$,
\begin{equation}
    \label{lem: interval eq 2}
    I_{k+d}^{S_d(k)} \cap I_{k+1}= \emptyset
\end{equation}
\end{lemma}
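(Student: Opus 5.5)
The plan is to identify $I_{k+d}^{S_d(k)}$ explicitly as an interval with endpoints on the critical orbit. Then \cref{lem: interval eq 1} follows from the definition of $I_k$ as the convex hull of $\{c_{S_d(j)}\}_{j\ge k}$, and \cref{lem: interval eq 2} is a side-and-distance comparison.

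\emph{Step 1 (explicit form).} Let $k\ge1$. Since $Q_d(k+d+1)=k+1$ and $S_d(k)<S_d(k+1)$, \cref{lem: injectivity}, applied with index $k+d\ge d$, gives that $f^{S_d(k)-1}$ is monotone on $[c_{S_d(k+d)+1},c_1]=I_{k+d}^1$. Hence
\[
I_{k+d}^{S_d(k)}=[c_{S_d(k)},\,c_{S_d(k)+S_d(k+d)}].
\]

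\emph{Step 2 (containment, $k\ge1$).} By \cref{lem:close_return_to_crit_value},
\[
I_{k+d}^1=H_{S_d(k+d)+1}\subset H_{S_d(k+d-1)+1}=I_{k+d-1}^1.
\]
Since $k+d-1\ge d$ and $Q_d(k+d)=k$, \cref{lem: injectivity} (now with index $k+d-1$) shows that $f^{S_d(k)-1}$ is monotone on $I_{k+d-1}^1$. Applying it to the inclusion above gives
\[
I_{k+d}^{S_d(k)}\subset I_{k+d-1}^{S_d(k)}=[c_{S_d(k+d-1)+S_d(k)},\,c_{S_d(k)}]=[c_{S_d(k+d)},\,c_{S_d(k)}],
\]
where we used $S_d(k+d)=S_d(k+d-1)+S_d(k)$. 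Both endpoints $c_{S_d(k)}$ and $c_{S_d(k+d)}$ lie in $I_k$, which is convex, so \cref{lem: interval eq 1} follows. The case $k=0$ is a direct check: $S_d(0)=1$, and $I_d^1=[c_{S_d(d)+1},c_1]\subset[c_2,c_1]$ by \cref{eq:prefix_kneading_invariant} and monotonicity on the left branch. One compares this with $I_0$ from \cref{eq:I_k_interval}.

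\emph{Step 3 (disjointness, $k\ge d-1$).} By the block rule \cref{eq:cut_time} with $n=S_d(k+d)+S_d(k)<S_d(k+d+1)$, the point $c_n$ lies on the same side of $c$ as $c_{S_d(k)}$. So $I_{k+d}^{S_d(k)}$ is a one-sided interval with outer endpoint $c_{S_d(k)}$. On that side, $I_{k+1}$ extends up to the farthest $c_{S_d(j)}$ with $j\ge k+1$, and by the parity rule this is one of its endpoints in \cref{eq:I_k_short_def}. It therefore suffices to show that $\|c_n-c\|$ is strictly larger than the distance from $c$ to that endpoint.

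I would argue by contradiction, in the style of the proof of \cref{lem:I_k_disjoint}. Suppose $c_n\in I_{k+1}$. Pull back by the monotone branch $f^{S_d(k)-1}$ used in Step 2, and push forward by $f^{S_d(k+1)-S_d(k)}$, which is monotone by \cref{lem: injectivity} applied to the appropriate tower level. This should produce a return to $c$ closer than $c_{S_d(k+d+1)}$ at a time before $S_d(k+d+1)$, contradicting \cref{lem: distance with c} or \cref{eq:max_returns_to_c}. The hypothesis $k\ge d-1$ enters in guaranteeing that these iterates are still monotone, i.e.\ that $Q_d$ is non-zero at the relevant indices.

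I expect Step 3 to be the main obstacle. The delicate part is tracking which endpoint of $I_{k+1}$ is on the side of $c_{S_d(k)}$, and this depends on $k \bmod d$ through the parity rule. Rather than splitting into all $d$ residues, I would first try to handle this uniformly by comparing with the single nearest cutting-time point of the same side.
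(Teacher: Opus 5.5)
Your Steps 1 and 2 are correct, and they prove \eqref{lem: interval eq 1} by a different and shorter route than the paper's. The paper uses that $c\notin I_{k+d}^{S_d(k)}$ and that $I_{k+d}^{S_d(k)}$ shares the endpoint $c_{S_d(k)}$ with $I_k$. So either the two intervals meet only at $c_{S_d(k)}$, or $I_{k+d}^{S_d(k)}\subset I_k$. The paper excludes the first alternative by pushing both intervals forward $S_d(Q_d(k+1))$ more steps: they would become adjacent intervals meeting only at $c_{S_d(k+1)}$, yet both would contain $c$. You instead push the Hofbauer nesting $H_{S_d(k+d)+1}\subset H_{S_d(k+d-1)+1}$ through the monotone branch $f^{S_d(k)-1}$, and land in $[c_{S_d(k+d)},c_{S_d(k)}]$, whose endpoints lie in $I_k$ by definition. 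This avoids the dichotomy, at the price of invoking \cref{lem:close_return_to_crit_value} and checking $k=0$ by hand.

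Step 3, however, does not prove \eqref{lem: interval eq 2}. The sentence ``this should produce a return closer than $c_{S_d(k+d+1)}$'' is precisely what needs an argument, and the pull-back by $f^{S_d(k)-1}$ is not needed. There are only two cases, not $d$.

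If $k\equiv 0 \pmod d$, the endpoint of $I_{k+1}$ on the side of $c_{S_d(k)}$ is $c_{S_d(k+d+1)}$. Since $n=S_d(k+d)+S_d(k)<S_d(k+d+1)$, \cref{lem: distance with c} gives the claim at once.

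If $k\not\equiv 0$, that endpoint is $c_{S_d(k+1)}$, and non-disjointness means $c_{S_d(k+1)}\in I_{k+d}^{S_d(k)}$. By \cref{lem: injectivity} at index $k+d$ (valid for every $k\ge 1$), $f^{S_d(Q_d(k+1))}$ is monotone on $I_{k+d}^{S_d(k)}$ and maps it onto $[c_{S_d(k+1)},c_{S_d(k+d+1)}]\ni c$. Hence $m:=S_d(k+1)+S_d(Q_d(k+1))$ satisfies $c_m\in[c_{S_d(k+1)},c_{S_d(k+d+1)}]$.

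The missing ingredient is that $c_m$ lies on the opposite side of $c$ from $c_{S_d(k+1)}$. The block $\Delta_{k+2}$ in \eqref{eq:cut_time} gives $\varepsilon_m=\varepsilon_{S_d(Q_d(k+1))}$, and $\Delta_{k+1}$ gives $\varepsilon_{S_d(Q_d(k+1))}\neq\varepsilon_{S_d(k+1)}$. The first of these requires $S_d(Q_d(k+1))<S_d(Q_d(k+2))$, i.e.\ $Q_d(k+2)\ge 1$, i.e.\ $k\ge d-1$. This is where the hypothesis enters, not in the monotonicity, which holds for all $k\ge 1$. It is indispensable: for $1\le k\le d-2$ the two intervals really do intersect. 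With the side information, $c_m$ lies between $c$ and $c_{S_d(k+d+1)}$ while $m<S_d(k+2)\le S_d(k+d+1)$, contradicting \cref{lem: distance with c}.

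Do not model Step 3 on the paper's own argument for this case. It passes through the inclusion $I_{k+d}^{S_d(k)+1}\subset I_{k+1}^1$ in \eqref{eq:cont_contradiction}, which cannot hold: the endpoint $c_{S_d(k)+1}$ lies below $c_{S_d(k+1)+1}$ by \eqref{eq:max_returns_to_c}. The push-forward to time $S_d(k+1)$ described above reaches the contradiction you were aiming for without that inclusion.
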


\begin{proof}
First, we prove \cref{lem: interval eq 1}.
By \cref{lem: injectivity}, for every $j \in \{1,2,\ldots,S_d(k+1)-1\}$, we have
\[c \notin I_{k+d}^j=\left[c_j,c_{S_d(k+d)+j} \right].\]
Since $1 \le S_d(k) < S_d(k+1)$, taking $j=S_d(k)$ gives
\[c \notin I_{k+d}^{S_d(k)}=\left[c_{S_d(k)},c_{S_d(k+d)+S_d(k)} \right].\]
Then, since $I_{k+d}^{S_d(k)}$ and $I_k$ have $c_{S_d(k)}$ as a common boundary point, we have either 
\[ I_{k+d}^{S_d(k)} \cap I_k=\{c_{S_d(k)}\} \hspace{0.5cm} \text{ or } \hspace{0.5cm} I_{k+d}^{S_d(k)} \subset I_k.\]
If $I_{k+d}^{S_d(k)} \cap I_k=\{c_{S_d(k)}\}$, then
\[I_{k+d}^{S_d(k)+1} \cap I_k^1= \left[c_{S_d(k+d)+S_d(k)+1},c_{S_d(k)+1} \right] \cap 
\left[ c_{S_d(k)+1},c_1\right]=\{c_{S_d(k)+1}\}\]
and
\[c_{S_d(k+d)+S_d(k)+1} < c_{S_d(k)+1} < c_1.\]
By \cref{lem: injectivity}, 
\begin{align}\label{cin}
c \in I_{k+d}^{S_d(k+1)} \quad \text{and} \quad c \in I_{k}^{S_d(Q_d(k+1))},
\end{align}
and, since $S_d(k+1)=S_d(k)+S_d(Q_d(k+1))$,
\[c \notin I_{k+d}^{S_d(k)+j} \cup I_k^j \quad \text{for} \; j \in \{1,2,\ldots,S_d(Q_d(k+1))-1\}.\]
In particular, 
\[I_{k+d}^{S_d(k+1)} \cap I_k^{S_d(Q_d(k+1))}=\{c_{S_d(k+1)}\}\neq \{c\},\]
contradicting \cref{cin}. Thus we must have $I_{k+d}^{S_d(k)} \subset I_k$.

Finally, we prove \cref{lem: interval eq 2}. Suppose the statement of \cref{lem: interval eq 2} is not true. By \cref{eq:I_k_interval}, for every $k \ge d-1$, 
$I_k$ and $I_{k+1}$ have a common boundary point. If $k \equiv\ell \pmod{d}$, then
\[\partial I_k \cap \partial I_{k+1}=
\begin{cases}
    c_{S_d(k+1+d-\ell)}  & \text{ if } \ell \neq 0 \\
    c_{S_d(k+1)} & \text{ if } \ell = 0
\end{cases}.\]
If $k \equiv\ell \pmod{d}$ with $\ell \neq 0$, then
\[ \partial I_k \cap \partial I_{k+1}= \{ c_{S_d(k+1+d-\ell)} \} \hspace{0.5cm} \text{ and } \hspace{0.5cm}
\partial I_k \cap \partial I_{k+d}^{S_d(k)} = \{ c_{S_d(k)} \}, \] so we must have 
\[ |c_{S_d(k+1)} - c| > | c_{S_d(k+d)+S_d(k)} -c|. \] This implies $c_{S_d(k+1)+1} < c_{S_d(k+d)+S_d(k)+1} < c_1$, 
and thus
\begin{equation}
    \label{eq:cont_contradiction}
    I_{k+d}^{S_d(k)+1} \subset I_{k+1}^1.
\end{equation}
Now, by Lemma \ref{lem: injectivity}, for $1 \le j < S_d(Q_d(k+1))$, \[ c \not\in I_{k+d}^{S_d(k)+j} \quad 
\text{ and } \quad c \in I_{k+d}^{S_d(k+1)} = I_{k+d}^{S_d(k)+S_d(Q_d(k+1))}. \]  On the other hand, for 
$1 \le j < S_d(k+2-d)$, \[c \not\in I_{k+1}^j.\] By \cref{eq:cont_contradiction}, we have 
\[ c \in I_{k+d}^{S_d(k)+S_d(Q_d(k+1))} \subset I_{k+1}^{S_d(Q_d(k+1))}, \] so this leads to a contradiction since 
$S_d(Q_d(k+1)) < S_d(Q_d(k+2))$.

If $k\equiv 0 \pmod{d}$, by the same argument as above, then we must have 
\[ |c_{S_d(k+1+d)} - c | > |c_{S_d(k+d)+S_d(k)} - c|. \] Since $S_d(k+d)+S_d(k) < S_d(k+1+d)$, this contradicts 
\cref{lem: distance with c}.
\end{proof}

For $ 1 \le k < d$, the intervals $I_{k+d}^{S_d(k)}$ and $I_{k+1}$ have non-empty intersection. Indeed, since $I_{k+d}^{S_d(k)}$ is to the left of $c$, and since $f(I_{k+d}^{S_d(k)}) = I_{k+d}^{S_d(k+1)} \ni c$, we must have that the right boundary point of $I_{k+d}^{S_d(k)}$, that is $c_{S_d(k) + S_d(k+d)}$, must be located to the right of $c_{S_d(k+1)}$ that is the left boundary point of $I_{k+1}$.

Our next task is to construct $d$ intervals that will serve as the basis of our tower system, with only one of them containing the turning point $c$. 
If we start with $I_0 = [c_2,c_1]$, by \cref{lem: interval}, we have that the intervals $I_1$ and $I_d^{S_d(0)}$ are contained in $I_0$. In the next step, we have that $I_2$ and $I_{d+1}^{S_d(1)}$ are contained in $I_1$, the interval $I_2$ contains the turning point $c$. We can keep this procedure, and at each step we obtain two new intervals contained in the interval in the previous step containing $c$. 

Also, observe that for the first $d-1$ steps of this construction, the image of each interval we have contains the turning point $c$. When we reach the step $d-1$, we are left with $d$ intervals (excluding the initial interval $I_0$) \[ I_{d-1}, I_d^{S_d(0)}, I_{d+1}^{S_d(1)}, \ldots, I_{2d-2}^{S_d(d-2)}. \]  
At the next step, we generate two new intervals $I_d$ containing the turning point $c$, and the interval $I_{2d-1}^{S_d(d-1)}$, and we can observe that one of the intervals generated at the first step, $I_d^{S_d(0)}$, belongs to the orbit of $I_d$. Thus, at this step, we can always generate two new intervals, and one of our previous intervals gets "absorbed" in the orbit of the new interval containing the turning point.

This construction can be carried out inductively, leading to the definition of the following "base" 
intervals: For $k \ge 1$, put 
\begin{equation}
    \label{eq:def_central_tower}
    J_{1,k} \= I_k,
\end{equation}
and for $\max\{2,d-k+1\} \leq j \le d$, put
\begin{equation}
    \label{eq:def_J_ik}
    J_{j,k}=I_{k-1+j}^{S_d(Q_d(k-1+j))}=\left[c_{S_d(k-1+j)+S_d(Q_d(k-1+j))},c_{S_d(Q_d(k-1+j))} \right]
\end{equation}
whenever $k>d-j$.
Observe that, by definition, for every $k \in \N$ and every $3 \le j \le d$ for which $k > d-j$,
\begin{equation}
    \label{eq:J_i_k_to_next_level}
    J_{j,k} = J_{j-1,k+1}.
\end{equation}
Also, since $S_d(k+j+1)=S_d(k+j)+S_d(Q_d(k+j+1))=S_d(k+j)+S_d(Q_d(k+j))+S_d(Q_d(k+j+1-d))$,
\begin{align*}
f^{S_d(Q_d(k+j+1-d))}(J_{j,k+1})&=I_{k+j}^{S_d(Q_d(k+j))+S_d(Q_d(k+j+1-d))}\\
&=\left[ c_{S_d(k+j)+S_d(Q_d(k+j))+S_d(Q_d(k+j+1-d))},c_{S_d(Q_d(k+j))+S_d(Q_d(k+j+1-d))}\right]\\
&=\left[c_{S_d(k+j+1)},c_{S_d(Q_d(k+j+1))} \right].
\end{align*}
Thus, $c \notin J_{j,k+1}^i$ for $1 \le i < S_d(Q_d(k+1+j-d))$ whenever $k \ge 2d-1$,
and by \cref{lem:I_k_disjoint}, for every $k \ge 2d-1$, the intervals 
\[ J_{i,k}, J_{i,k}^1, \ldots, J_{i,k}^{h_{i,k}-1} \] 
are pairwise disjoint. Here, $h_{i,k}$ is defined in \cref{eq:tower_height_def}.

We use these intervals to define a sequence of compact sets. Let $M_{d,0} \= I_0$, for $k \ge 1$ and $\max \{d-k+1,2\} \le i \le d$ let $h_{i,k}$ be as in \cref{eq:tower_height_def}, and put 

\begin{equation}\label{eq: union}
        M_{d,k} \= \bigcup_{n=0}^{h_{1,k}-1}J_{1,k}^n \quad \cup  \bigcup_{i=\max \{d-k+1,2\}}^{d} \bigcup_{n = 0}^{h_{i,k}-1}J_{i,k}^n.
    \end{equation}

The following are the first $d +1$ sets $M_{d,k}$. 
\begin{align}
    \label{eq:d_levels_of_M_d,k}
    \begin{split}
        M_{d,0}&:=I_0 \\
        M_{d,1}&=J_{1,1} \cup J_{d,1} \\
        M_{d,2}&=J_{1,2} \cup J_{d,2} \cup J_{d-1,2} \\
        M_{d,3}&=J_{1,3} \cup J_{d,3} \cup J_{d-1,3} \cup J_{d-2,3} \\
        & \vdots \\
        M_{d,d-1}&= J_{1,d-1} \cup J_{d,d-1} \cup J_{d-1,d-1} \cup \ldots \cup J_{2,d-1} \\
        M_{d,d}&= J_{1,d} \cup J_{1,d}^1 \cup J_{d,d} \cup J_{d-1,d} \cup \ldots \cup J_{2,d}.
    \end{split}
\end{align}

For each $k \in \N$, we call the \emph{central tower} of the set $M_{d,k}$ the collection
\[ \cT_{d,k}(1) \=  \{ J_{1,k}, \ldots ,  J_{1,k}^{h_{1,k}-1} \}, \] 
and the interval $J_{1,k}$ will be called the \emph{base} of the central tower. For $\max \{d-k+1,2\} \leq i \le d$, 
we call the \emph{$i$th tower} of $M_{d,k}$ the collection 
\[ \cT_{d,k}(i) \= \{ J_{i,k}, \ldots ,J_{i,k}^{h_{i,k}-1} \}, \] 
and the interval $J_{i,k}$ will be called the \emph{base} of the $i$th tower. The \emph{height} of each tower will 
be the number of sets in it.

Thus, for $1 \le k \le d-1$, the set $M_{d,k}$ consists of $k + 1$ towers, each of them of height one. For $k \ge d$, the 
set $M_{d,k}$ consists of $d$ towers, its central tower has 
height $h_{1,k} = S_d(Q_d(k+1))$, and for $2 \le i \le d$, the $i$th tower has height $h_{i,k} = S_d(Q_d(k+i-d))$.

By \cref{lem:I_k_disjoint}, for $k \ge 2d-1$ and every $1 \le i \le d$, each tower $\cT_{d,k}(i)$ is a collection of 
pairwise disjoint intervals. Intervals from different towers need not be disjoint.
Indeed, each top level is mapped homeomorphically by $f$ onto a neighborhood of $c$, hence each must contain a point $x \in f^{-1}(c)$. Since $c$ has only two preimages, at least two top levels contain the same preimage, and therefore intersect. 

\begin{proof}[Proof of \cref{theo:covers_of_the_omega_set}]
Let $M_{d,k}$ be given by \cref{eq: union}. We start proving item $(i)$.
For every $k \in \N_0$, each $M_{d,k}$ is compact by construction, also
by \cref{lem: interval} and \cref{eq:J_i_k_to_next_level}, we have that $M_{d,k+1} \subset M_{d,k}$. 
Now we prove that $\omega_f(c) \subset M_{d,k}$ for every $k \in \N_0$.
By construction, for every $k \in \N_0$, each boundary point of $M_{d,k}$ is also a boundary point of $M_{d,k+1}$. 
Suppose that there exists a $k \in \N_0$ and $n \in \N$ so that $c_n \notin M_{d,k}$. 
Let $i \in \N$ be the largest for
which $S_d(i) \le n < S_d(i+1)$, and write $n = S_d(i) + m$. If $1 \le i < d$, then $n = S_d(i)$ and $c_n$ is a boundary point
of $I_n$. If $i \ge d$, by definition of the intervals $I_k$ and since 
$\partial M_{d,k} \subset \partial M_{d,k+1}$, we have that $\{ c_{S_d(j)} \}_{j \in \N_0} \subset M_{d,k} $ for every 
$k \in \N_0$. So $1 \le m < S_d(Q_d(i+1))$. If $1 \le m < S_d(Q_d(i))$, then $c_{S_d(i)+m} \in I_{i-1}^m \supset I_i^m$, and thus
is contained in $M_{d,k}$ for every $k \in \N_0$. Now, if $S_d(Q_d(i)) \le m < S_d(Q_d(i+1))$, we can write $ i = k'-1+j$ for 
some $2 \le j \le d$. Then \[ c_{S_d(i)+m} = c_{S_d(k'-1+j)+m} \in I_{k'-1+j}^{S_d(Q_d(k'-1+j))+m'}, \] where 
$m' = m - S_d(Q_d(k'-1+j)) = m - S_d(Q_d(i))$. Thus, $c_{S_d(i)+m} \in J_{j,k'}^{m'} \subset M_{d,k'}$. Since 
$M_{d,k'} \subseteq M_{d,k}$, we get $c_{S_d(i)+m} \in M_{d,k}$, contradicting our assumption.

Thus, we obtain \[ M_{d,0} \supset M_{d,1} \supset M_{d,2} \supset \ldots \supset \cO_f(c). \] 
Conversely, if $x \notin \ov{\cO_f(c)}$, then $x$ lies in the interior of a complementary interval to $\ov{\cO_f(c)}$, which would be  wandering unless it is eventually excluded from the nested covers, hence $x \notin \cap_{k \ge 1} M_{d,k}$. 
We conclude that 
\[ \bigcap_{k \in \N_0} M_{d,k} = \ov{\cO_f(c)} = \omega_{f}(c), \]
where the last equality is given by the fact that $c$ is recurrent and non-periodic. This concludes the proof of item $(i)$.

The proof of items $(ii)$ and $(iii)$ is a direct consequence of \cref{eq:def_J_ik}, \cref{eq: union}, and \cref{lem:I_k_disjoint}.

Finally, the proof of item $(iv)$ is a direct consequence of \cref{lem:I_k_disjoint}.
\end{proof}

\section{Bratteli-Vershik system associated with the collection $\{M_{d,k}\}_{k \geq 0}$} \label{sec:B-V_system}

In this section, we will associate a Bratteli diagram with the collection 
$\{ M_{d,k} \}_{k \ge 0}$. We will use the "tower" structure of each of the sets $M_{d,k}$ and how 
it changes from one level to the next 
(compare with \cite[Section 4]{Bruin2003_Minimal_Cantor_systems_and_unimodal_maps}, 
and \cite[Section 5.4.5]{Bruin2022_Top_and_erg_symb_dyn}).
We start by recalling the concepts of Bratteli diagrams and Bratteli-Vershik systems.

\subsection{Bratteli diagrams.}\label{subsec: Bratteli diagrams} 

A \emph{Bratteli diagram} is an infinite graph $B=(V,E)$, such that the vertex set $V$ and the edge set 
$E$ can be partitioned into a sequence of disjoint, finite, and  non-empty sets 
\[ V = V_0 \cup V_1 \cup \cdots \quad \text{ and } \quad E = E_1 \cup E_2 \cup \cdots \] satisfying the following:

\begin{itemize}
    \item[(i)] $V_0 = \{v_0\}$ consists of a single vertex;
    \item[(ii)] there exist functions $\phi \colon E \rightarrow V \times V$ (incidence map),
    $t \colon E \rightarrow V$ (target map), and $s \colon E \rightarrow V$ (source map)
    such that for every $i \in \N$, $\phi(E_i) \subset V_{i-1}\times V_i$, $t(E_i) = V_{i}$ and $s(E_i) = V_{i-1}$, 
    and $\phi(e) = (s(e),t(e))$ for every $e \in E$; 
    \item[(iii)] $t^{-1}(v) \neq \emptyset$ and $s^{-1}(v') \neq \emptyset$ for every $v \in V \setminus V_0$ and every 
    $v' \in V$.
\end{itemize}
Condition (ii) says that for every $i \in \N$, each edge $e \in E_i$ connects a vertex in $V_{i-1}$ with a vertex
in $V_i$, so we can associate each edge of $B$ with an ordered pair in $V \times V$. Condition (iii) says that for every 
$i \in \N_0$, each vertex in $V_i$ is connected to at least one vertex
in $V_{i+1}$ and each vertex in $V_{i+1}$ is connected to at least one vertex in $V_i$.

An \emph{infinite path} in $(V,E)$ is an infinite sequence $(e_j)_{j \ge 1} = e_1 e_2 e_3 \ldots$ such that 
$e_j \in E$ and $t(e_j) = s(e_{j+1})$ for every $j \in \N$. A \emph{finite path} is a string $e_1 e_2 \ldots e_k$ 
such that each $e_j \in E$ and $t(e_j) = s(e_{j+1})$ for $1 \le j < k$. By condition (iii), for every vertex $v \in V$
different from $v_0$, there exists at least one finite path starting from $v_0$ and arriving at $v$.

An \emph{ordered Bratteli diagram} $(V,E, \le)$  is a Bratteli diagram $(V,E)$ together with a linear order $\le$
defined on every set $t^{-1}(v)$ with $v \in V\setminus V_0$.  For every $j \in \N$, the partial order $\le$ induces an order in the set of finite paths from $v_0$ 
to $V_j$ in the following way: \[ e_1e_2 \ldots e_j < f_1f_2 \ldots f_j \] if and only if there exists a 
$j_0 \in \{ 1,2, \ldots j\}$ such that $e_{j_0} < f_{j_0}$ and for every $ j_0 < k \le j$, we have $e_k = f_k$. 
We use the same notation $\le$ to denote this partial order in the set of finite paths.

We say that an edge $e \in E$ is \emph{maximal} (respectively \emph{minimal}) if it is maximal (respectively minimal)
with respect to the order $\le$ on the set $t^{-1}(t(e))$. We call a finite path $e_1 \ldots e_k$ \emph{maximal}
(respectively \emph{minimal}) if $e_i$ is maximal (respectively minimal) for every $1 \le i \le k$. Note that, for 
$v \in V \setminus V_0$, there is a unique maximal (respectively minimal) path from $v_0$ to $v$.

\subsection{Bratteli-Vershik systems.} 
Fix an ordered Bratteli diagram $B = (V,E, \le)$. Denote by $X_B$ the set of infinite paths starting at $v_0$, thus 
\[X_B \= \{ (e_i)_{i \ge1} \in E^{\N} \colon s(e_1) = v_0 \text{ and } s(e_{i+1}) = t(e_{i}) \text{ for every } i \in \N \}.  \]
Given a 
finite path $e_1 \ldots e_k$ starting at $v_0$, we denote by $[e_1 \ldots e_k]$ the set of infinite paths 
$(f_n)_{n \geq 1}$ in $X_B$ such that for all $1 \le i \le k$, we have $f_i = e_i$. We endow $X_B$ with the topology
generated by the sets $[e_1 \ldots e_k]$. Then each of these sets is clopen, so $X_B$ becomes a compact Hausdorff 
space with a countable basis of clopen sets. 

We denote by $X_B^{\max}$ (respectively $X_B^{\min}$) the subset of $X_B$ consisting of all infinite paths 
$(e_i)_{i \ge 1}$ for which each of the $e_i$ is a maximal (respectively minimal) edge. We see that each of 
these sets is non-empty. We only consider the case when the set $X_B^{\min}$ is 
reduced to a single sequence that we denote by $x_{\min}$, as is the case in our setting.

The \emph{Vershik adic transformation} (also called the \emph{Vershik map}) $V_B \colon X_B \longrightarrow X_B$ is defined as follows:
\begin{itemize}
    \item $V_B^{-1}(x_{\min}) = X_B^{\max}$.
    \item Given $x = (x_i)_{i\geq 1} \in X_B \setminus X_B^{\max}$, let $j \geq 1$ be the smallest integer such that
    $x_j$ is not maximal. Let $e_j$ be the successor of $x_j$, and $e_1 \ldots e_{j-1}$ be the unique minimal path from
    $v_0$ to $s(e_j)$. Then we put \[ V_B(x) = e_1 \ldots e_{j-1} e_j x_{j+1} x_{j+2} \ldots. \]
\end{itemize}
This map is continuous, onto, and invertible except at $ x_{ \min }$, which may have several preimages.


\subsection{Incidence matrices} \label{subsec:Matrix_BV_syst}
We fix an ordered Bratteli diagram $B = (V,E,\le)$ having a unique minimal infinite path, and consider the Vershik map 
$V_B \colon X_B \longrightarrow X_B$ defined in the previous subsection.

Given non-empty finite sets $V, V'$, denote by $\cM_{V,V'}$ the set of matrices whose entries are real and indexed by $V \times V'$. For a matrix $A \in \cM_{V,V'}$, we 
denote by $A^t$ the transpose of $A$, and for $(v,v') \in V \times V'$, we denote by $A(v,v')$ the corresponding entry of $A$. 

For $j \geq 1$ and $v \in V_j$, we denote by $N_j(v)$ the number of paths starting at $v_0$ and arriving to $v$ and put
$\vec{N_j} = (N_j(v))_{v \in V_j} \in \R^{V_j}$. To each $E_j$, we assign an incidence matrix 
$F_j \in \cM_{V_{j-1}\times V_j}$ such that for each $v \in V_{j-1}$ and $v' \in V_j$ the entry $F_j(v,v')$ is equal to
the number of edges from $v$ to $v'$. It follows by induction that $\vec{N}_j = \vec{N}_{j-1}F_j$.


\subsection{The Bratteli-Vershik system associated with the collection $\{M_{d,k}\}_{k \ge 0}$}\label{subsection:BV_system_associated_to_Mdk}

We start constructing a Bratteli diagram associated with the collection $\{ M_{d,k} \}_{k \ge 0}$. The idea is as 
follows: each vertex in the set $V_k$ corresponds to one of the towers of $M_{d,k}$.
Then, given two vertices $a$ and $b$, we will have an edge $e$ with $s(e)=a$ and
$t(e)=b$ if $a$ and $b$ correspond to towers in consecutive sets of $\{M_{d,k}\}_{k \ge 0}$ and
the tower associated with the vertex $b$ is "contained" in the tower associated with the vertex $a$.

Now, we present a precise construction.
For every integer $k \ge 1$, put $v_k(1) \= \cT_{d,k}(1)$, and for $\max\{d-k+1,2\} \le i \le d$, put 
$v_k(i) \= \cT_{d,k}(i)$.
Put $V_0= \{v_0\} \= \{I_0\}$, for $ 1 \le k < d-1$, \[V_k:=\{v_k(1),v_k(d-k+1),\ldots,v_k(d)\},\] and for $k \ge d-1$, \[V_k:=\{v_k(1),v_k(2),\ldots,v_k(d)\}.\] 
And thus, for $0 \le i \le d-1$, each set $V_i$ consists of $i+1$ vertices, and for $i \ge d-1$, each set $V_i$ consists
of $d$ vertices. So $V = \cup_{k \geq 0} V_k$ is our set of vertices.

Now, for each integer $k \geq 1$, we will define a collection of edges $E_k$ connecting vertices from $V_{k-1}$ with vertices in $V_{k}$. The edges represent how each tower in $M_{d,k-1}$ changes with respect to the towers in $M_{d,k}$. 

By \cref{lem: interval}, for every integer $k \ge 1$, the interval $J_{1,k-1}$ splits into two intervals $J_{1,k}$ and $J_{d,k}$. This means that for each $k \in \N$, we have two edges $e_k(1,1),e_k(1,d) \in E_k$ with $s(e_k(1,1)) = s(e_k(1,d)) =v_{k-1}(1)$, $t(e_k(1,1)) =v_k(1)$, and  $t(e_k(1,d))=v_k(d)$. Also, by \cref{eq:J_i_k_to_next_level}, for every $k \in \N$, and each $3 \le j <d$ for which $k>d-j$, there is a unique edge $e_k(j,j-1)$ with $s(e_k(j,j-1))=v_{k-1}(j)$ and $t(e_k(j,j-1))=v_k(j-1)$. For each of these edges, we set $\phi(e_k(i,j)) = (v_{k-1}(i),v_k(j))$. 

Thus, we have that for $k \in \N$, $v_k(1)$ has two "outgoing" edges, any other vertex $v_k(i)$ has a single outgoing edge. For $k \le d-1$, each vertex has a unique incoming edge,
and for $k \ge d$
the vertex $v_k(1)$ has two "incoming" edges, any other vertex in $V \setminus V_0$ has one "outgoing" and one
"incoming" edge, see \cref{fig:Bratelli_diagram_first_d_levels}.

Thus, for every $i \in \N$, we can associate, without ambiguity, to each $e \in E_k$ the uniquely defined pair 
$(s(e), t(e)) \in V_{k-1} \times V_k$. It is not hard to check that the graph (V,E) constructed above is 
a Bratteli diagram.
We will denote the ordered Bratteli diagram associated with the collection $\{M_{d,k}\}_{k \ge 0}$ constructed above by $B_d$.

We now have the Bratteli diagram, and the next step is to understand the transition matrices and their product.
The following lemma describes the incidence matrices of the Bratteli diagram $B_d$. Its proof is direct from the definition 
of the Bratteli diagram, since for $k \ge d$, this one becomes stationary. We will denote by $F_{k,d}$ the $k$th incidence matrix of the Bratteli diagram $B_d$.
\begin{lemma}
    \label{lem:inc_mtx}
    For every $1 \le k < d$, the $k$th incidence matrix of $B_d$ has size $k \times (k+1),$ and is given by 
    \[ F_{k,d} = 
    \begin{bmatrix}
        1 & 0 & 0 & \ldots & 0 & 1 \\
        0 & 1 & 0 & \ldots & 0 & 0 \\
        \vdots & \vdots & \vdots & \ddots & \vdots & \vdots \\
        0 & 0 & 0 & \ldots & 1 & 0
    \end{bmatrix}
    = 
    \begin{bmatrix}
        1 & 0^*  & 1 \\
        0^* &  I_{k-1}  & 0^* \\
        
    \end{bmatrix}
    ,
    \]
    and for $k \ge d$, the incidence matrix $F_{k,d}$ has size $d \times d$, and is given by 
    \[ F_{k,d} = 
    \begin{bmatrix}
        1 & 0 & 0 & \ldots & 0 & 1 \\
        1 & 0 & 0 & \ldots & 0 & 0 \\
        \vdots & \vdots & \vdots & \ddots & \vdots & \vdots \\
        0 & 0 & 0 & \ldots & 1 & 0
    \end{bmatrix}
    =
    \begin{bmatrix}
        1&   0^*  & 1 \\
        &I_{d-1}   & 0^* \\
        
    \end{bmatrix}
    ,
    \]
    where $I_n$ represents the identity matrix of size $n$, and $0^*$ denotes a column or row consisting entirely of zeros.
\end{lemma}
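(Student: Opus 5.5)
The plan is to read off the edge sets $E_k$ constructed just above and record them as matrices. Throughout, the rows of $F_{k,d}$ are indexed by $V_{k-1}$ and the columns by $V_k$. I order both so that $v_{k-1}(1)$, respectively $v_k(1)$, comes first and the remaining towers follow in increasing index; $v_k(d)$ is the last column.

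By the construction in \cref{subsection:BV_system_associated_to_Mdk}, which rests on \cref{lem: interval}, the central vertex $v_{k-1}(1)$ emits exactly two edges, $e_k(1,1)$ and $e_k(1,d)$. This gives the first row $(1,0,\dots,0,1)$ at every level $k$. By \cref{eq:J_i_k_to_next_level}, every other vertex $v_{k-1}(j)$ emits a single edge, to $v_k(j-1)$. The only open point is what happens to $v_{k-1}(2)$, which exists once $k-1\ge d-1$.

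\textbf{Case $1\le k<d$.} Here $V_{k-1}$ has $k$ vertices, namely $v_{k-1}(1)$ and $v_{k-1}(j)$ for $d-k+2\le j\le d$, and $V_k$ has $k+1$ vertices. Since $j\ge d-k+2\ge 3$, each $v_{k-1}(j)$ is sent to $v_k(j-1)$, and $j-1$ ranges over $d-k+1,\dots,d-1$. In my ordering this is the block $I_{k-1}$ placed in columns $2,\dots,k$, with the last column zero in those rows. The last column receives only the edge from $v_{k-1}(1)$. Every vertex of $V_k$ therefore has exactly one incoming edge, and the matrix has the stated form.

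\textbf{Case $k\ge d$.} Here both levels have $d$ vertices. The vertex $v_{k-1}(2)$ has base $J_{2,k-1}=I_k^{S_d(Q_d(k))}$, which by \cref{eq:def_J_ik} is the image $f^{S_d(Q_d(k))}(J_{1,k})$, the top part of the new central tower. So $v_{k-1}(2)$ connects to $v_k(1)$. Its floors are the floors of $\cT_{d,k}(1)$ at heights $S_d(Q_d(k)),\dots,S_d(Q_d(k+1))-1$, using $h_{1,k}=S_d(Q_d(k))+S_d(Q_d(k+1-d))$. This identification is the step needing care: I would check that $h_{2,k-1}=S_d(Q_d(k+1-d))$ exactly matches that top segment. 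This is the "absorption" described before \cref{eq:def_central_tower}, and it gives row 2 equal to $(1,0,\dots,0)$. Rows $j=3,\dots,d$ put a $1$ in column $j-1$, forming an $I_{d-1}$ block below row 1 in columns $1,\dots,d-1$. The last column again receives only the edge from $v_{k-1}(1)$.

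The main obstacle is the containment for $v_{k-1}(2)$: one must verify that some floor of $v_k(1)$ lies inside a floor of $v_{k-1}(2)$, and that no other containments create extra edges. Extra edges could arise from overlapping top levels, so I would use the disjointness in \cref{lem:I_k_disjoint} and the nesting in \cref{lem: interval} to rule them out. Once that is done, the matrices are constant for $k\ge d$, which gives the stationarity claimed.
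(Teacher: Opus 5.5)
Your read-off of the edge sets is the same argument the paper gives; the paper simply says the lemma is direct from the definition of $B_d$. Your check that $\cT_{d,k-1}(2)$ is exactly the top block of $\cT_{d,k}(1)$ is also correct. It uses $J_{2,k-1}=f^{S_d(Q_d(k))}(J_{1,k})$, $h_{2,k-1}=S_d(Q_d(k+1-d))$ and $h_{1,k}=S_d(Q_d(k))+S_d(Q_d(k+1-d))$, and it supplies the edge $v_{k-1}(2)\to v_k(1)$, which the construction only asserts implicitly.

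The step that fails is the one you call the main obstacle: showing that no further containments between floors of consecutive levels occur. That statement is false, so no combination of \cref{lem:I_k_disjoint} and \cref{lem: interval} can give it. \cref{lem:I_k_disjoint} only separates floors of a single tower, and floors of different towers do nest.

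Take $d=3$, where $S_3(0),\dots,S_3(9)=1,2,3,4,6,9,13,19,28,41$. We have $J_{3,5}=I_7^{6}$ with $h_{3,5}=3$, and $J_{3,6}=I_8^{9}$ with $h_{3,6}=4$. So the top floor of $\cT_{3,5}(3)$ is $I_7^{8}=[c_{27},c_8]$, and the top floor of $\cT_{3,6}(3)$ is $I_8^{12}=[c_{40},c_{12}]$. By \cref{lem: injectivity} neither contains $c$. Since $\e_8=\e_{12}=0$, both lie in $[-1,c)$, where $f$ is increasing. Their images are $I_7^{9}=[c_{28},c_9]$ and $I_8^{13}=[c_{41},c_{13}]$. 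From $\e_9=\e_{13}=1$, $\e_{28}=\e_{41}=0$, \cref{eq:right_side_returns_to_c} and \cref{eq:left_side_returns_to_c} we get $c_{28}<c_{41}<c<c_{13}<c_9$, hence $I_8^{12}\subset I_7^{8}$.

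Under the literal containment rule this is an edge $v_5(3)\to v_6(3)$, i.e.\ a $1$ in entry $(3,3)$ of $F_{6,3}$, which the stated matrix does not have. The same nesting of the top floors of the $d$th towers recurs for every $d\ge 3$ and infinitely many $k$, so it is not an accident of one example.

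The lemma is therefore true only when $E_k$ is read as the designated parent--child edges of the refinement $M_{d,k-1}\to M_{d,k}$, as listed in \cref{subsection:BV_system_associated_to_Mdk}. These are: $J_{1,k}$ and $J_{d,k}$ inside $J_{1,k-1}$; $\cT_{d,k}(j-1)=\cT_{d,k-1}(j)$ for $3\le j\le d$; and $\cT_{d,k-1}(2)$ absorbed as the top of $\cT_{d,k}(1)$. That is what the construction there actually specifies, even though the introduction phrases edges via arbitrary containments. With that reading your case analysis already proves the lemma completely, and the exclusion argument should be dropped rather than attempted.
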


\begin{lemma}
    \label{lem:first_level_telescop}
    For every $d \ge 2$, we have \[ F_{1,d} F_{2,d} \ldots F_{d-1,d} = 
    \begin{bmatrix}
        1 & 1  & \ldots & 1
    \end{bmatrix} 
    \in M_{1 \times d}.
    \]
\end{lemma}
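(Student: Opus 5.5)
We prove the identity by induction on the number of factors, using the explicit form of the matrices $F_{k,d}$ for $1 \le k < d$ from \cref{lem:inc_mtx}. The key observation is that each such $F_{k,d}$ is a $k \times (k+1)$ matrix whose columns are standard basis vectors: the first and last columns equal $e_1$, and the middle columns are $e_2, \ldots, e_k$. Consequently, for any row vector $w = (w_1, \ldots, w_k)$ we have
\[ w F_{k,d} = (w_1, w_2, \ldots, w_k, w_1). \]
Thus right multiplication by $F_{k,d}$ copies the first entry of $w$ and appends it as a new last coordinate.

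The claim to prove by induction is that, for every $1 \le m \le d-1$,
\[ F_{1,d} F_{2,d} \cdots F_{m,d} = \begin{bmatrix} 1 & 1 & \cdots & 1 \end{bmatrix} \in M_{1 \times (m+1)}. \]
For the base case $m = 1$, $F_{1,d}$ is the $1 \times 2$ matrix $[1 \ 1]$. Its first row is $1$ followed by the zero block $0^*$ (empty here) and then $1$, and there is no identity block since $I_0$ is empty. For the inductive step, suppose the product up to $m-1$ is the all-ones row vector of length $m$. Applying the column observation with $k = m$ shows that multiplying by $F_{m,d}$ gives $(1, \ldots, 1, 1)$ of length $m+1$. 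Taking $m = d-1$ yields the stated $1 \times d$ all-ones vector.

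An equivalent path-counting argument is also available. Since $\vec N_j = \vec N_{j-1} F_j$ and $\vec N_0 = (1)$, the product is the vector $\vec N_{d-1}$ of path counts from $v_0$ to the vertices of $V_{d-1}$. For $k \le d-1$, each vertex in $V_k$ has exactly one incoming edge, so every $N_{d-1}(v)$ equals $1$.

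The only point requiring care is the indexing in \cref{lem:inc_mtx}. One must check that the block form really has size $k \times (k+1)$, and that its columns correspond to $v_k(1), v_k(d-k+1), \ldots, v_k(d)$, with the last column, for the new vertex $v_k(d)$, fed by $v_{k-1}(1)$. This is consistent with the edges $e_k(1,1)$ and $e_k(1,d)$ and with the single edge into each other vertex. Beyond this bookkeeping there is no real obstacle.
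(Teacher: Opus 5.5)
Your proof is correct and is essentially the paper's argument: the paper inducts on $d$, using $F_{k,d'}=F_{k,d'+1}$ for $k\le d'-1$, rather than on the number of factors with $d$ fixed. Since these matrices do not depend on $d$ this difference is cosmetic, and the paper's single inductive step is exactly your observation that the all-ones row times $F_{k,d}$ is the all-ones row of length $k+1$ (your version also yields the intermediate products $F_{1,d}\cdots F_{m,d}$, which the paper later needs for $\vec N_j$ with $1\le j<d$).
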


\begin{proof}
    The proof is obtained by induction on $d$. For $d =2$, the result is true by the definition of the diagram $B_d$. Suppose that the 
    result holds for some $d' \ge 2$. By the definition of $B_d$, for every $1 \le k \le d'-1$, \[ F_{k,d'} = F_{k,d'+1}. \] Then,
    \begin{equation*}
        F_{1,d'+1}F_{2,d'+1} \ldots F_{d'-1,d'+1} F_{d',d'+1} = F_{1,d'}F_{2,d'} \ldots F_{d'-1,d'} F_{d',d'+1}.
    \end{equation*}
    By the induction hypothesis, 
    \[ F_{1,d'+1}F_{2,d'+1} \ldots F_{d'-1,d'+1} F_{d',d'+1} = 
    \begin{bmatrix}
        1 & 1  & \ldots & 1
    \end{bmatrix}
    F_{d',d'+1} =
    \begin{bmatrix}
        1 & 1  & \ldots & 1
    \end{bmatrix}
    \in M_{1,d'+1}.
    \]
    This completes the proof.
\end{proof}

Now, we study the product of the matrices $F_{k,d}$ for $k \geq d$. Put 
\[ F_d \=  
\begin{bmatrix}
    1 & 0^* & 1 \\
    & I_{d-1} & 0^*
\end{bmatrix}
.\] 
Then $\prod_{i=0}^{n-1} F_{d+i,d} = (F_d)^n$. In the following lemma, we write the $n$th power of the matrix $F_d$ in 
terms of the cutting time sequence associated with the kneading map $Q_d(k) = \max \{0, k-d\}$. Throughout the rest of this section, we work with a fixed value of $d$ and omit explicitly indicating dependence on it when it is clear from context.

Let us start by extending the sequence of cutting times to some negative values. We do this so that the recurrence $S_d(k) = S_d(k-1) + S_d(k-d)$ holds uniformly for all indices appearing in the matrix formula below. Define

\begin{enumerate}
    \item[(i)] $S_d(-2d+2)=0$,
    \item[(ii)] $S_d(-2d+1)=1$,
    \item[(iii)] $S_d(-3d+3)=S_d(-3d+4)=S_d(-3d+5)=\cdots=S_d(-2d)=0$.
\end{enumerate}

With this extension, the linear recurrence equation \cref{eq:cut_times_formula} can be written as 
\begin{equation}
    \label{eq:exte_recur}
    S_d(k) = S_d(k-1) + S_d(k-d),
\end{equation}
for every $k \ge -2d + 3$.

\begin{lemma}
    \label{lem:powers_of_F_d}
    For every $j \ge 1$, we have
     \[  F_d^j = 
     \begin{bmatrix}
        S_d(j-d+1)      & S_d(j-2d+2)   & S_d(j-2d+3)   & \cdots    & S_d(j-d)      \\
        S_d(j-d)        & S_d(j-2d+1)   & S_d(j-2d+2)   & \cdots    & S_d(j-d-1)    \\
        S_d(j-d-1)      & S_d(j-2d)     & S_d(j-2d+1)   & \cdots    & S_d(j-d-2)    \\
        \vdots          & \vdots        & \vdots        & \ddots    & \vdots &      \\
        S_d(j-2d+2)     & S_d(j-3d+3)   & S_d(j-3d+4)   & \cdots    & S_d(j-2d+1)
    \end{bmatrix}.\]
\end{lemma}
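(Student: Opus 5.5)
We argue by induction on $j$, viewing the claimed formula as the statement that the entry in row $r$ and column $s$ (for $1 \le r,s \le d$) of $F_d^j$ equals
\[ \bigl(F_d^j\bigr)_{r,s} = \begin{cases} S_d(j-d+2-r) & \text{if } s=1,\\ S_d(j-2d+s-r+1) & \text{if } 2 \le s \le d.\end{cases} \]
One checks this indexing against the displayed matrix: for $s=d$ we get $S_d(j-d+1-r)$, which gives $S_d(j-d)$ in the first row. Before starting, we record the values of the extended sequence that occur. From the definitions (i)--(iii), $S_d(-3d+3)=\cdots=S_d(-2d)=0$, $S_d(-2d+1)=1$, $S_d(-2d+2)=0$. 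Using \cref{eq:exte_recur} backwards, together with $S_d(0)=1,\dots,S_d(d)=d+1$, we also get $S_d(-1)=\cdots=S_d(-d+1)=1$ and $S_d(-d)=\cdots=S_d(-2d+3)=0$. (For instance $S_d(d)=S_d(d-1)+S_d(0)$ is consistent, and $S_d(k)=S_d(k-1)+S_d(k-d)$ for $1\le k\le d$ forces $S_d(k-d)=1$.) We will verify these carefully, since a wrong boundary value would break the whole formula.

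\textbf{Base case $j=1$.} The first column should be $S_d(0),S_d(-1),\dots,S_d(-d+1)$, which is $1,1,\dots,1$. However, $F_d$ has first column $(1,1,0,\dots,0)^t$. This forces us to reread the matrix $F_d=\begin{bmatrix}1&0^*&1\\ &I_{d-1}&0^*\end{bmatrix}$: its rows $2,\dots,d$ are $[I_{d-1}\mid 0]$, i.e. row $r$ has a $1$ in column $r-1$. So the first column is $(1,1,0,\dots,0)^t$, and this should match $S_d(2-r-d+1)$ with $S_d(0)=1$ and $S_d(-1)=1$. That suggests the correct first-column formula is $S_d(j-d+1-(r-1)\cdot\text{?})$. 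The main step, and the main obstacle, is therefore to pin down the exact index pattern, i.e. which entries of the displayed matrix shift by which amount, by comparing with $j=1$ and $j=2$ explicitly. Once the pattern is fixed, the base case is a finite check against the tabulated small values, including the zeros $S_d(-2d+2)=0$ and $S_d(-3d+3)=\cdots=0$ coming from (i)--(iii).

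\textbf{Inductive step.} We compute $F_d^{j+1}=F_d^j F_d$, i.e. multiply on the right by $F_d$. Right multiplication by $F_d$ acts on columns as follows. The new column $1$ equals old column $1$ plus old column $2$. The new column $s$ equals old column $s+1$ for $2\le s\le d-1$. The new column $d$ equals old column $1$. So the induction reduces to three identities.
\begin{enumerate}[(a)]
\item Shifting $j\mapsto j+1$ in the columns $s\ge 2$ is the same as moving one column to the right. This is immediate, because the index depends on $j+s$.
\item The new last column is the old first column. This is again an index identity.
\item The first-column identity $S_d(j-d+2-r)+S_d(j-2d+3-r)=S_d(j-d+3-r)$. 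This is exactly \cref{eq:exte_recur} at $k=j-d+3-r$.
\end{enumerate}
The recurrence is valid for $k\ge -2d+3$. Since $j\ge1$ and $r\le d$, we have $k\ge -2d+4$, so \cref{eq:exte_recur} applies; this is precisely why the extension to negative indices was introduced. Alternatively, one could multiply on the left, $F_d^{j+1}=F_dF_d^j$, which shifts rows: row $r$ of the product is row $r-1$ of $F_d^j$, and row $1$ is row $1$ plus row $d$. We will choose whichever side makes the index bookkeeping in the displayed matrix match, and then the proof closes by induction.
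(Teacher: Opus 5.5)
\textbf{Genuine gap: the base case is mis-evaluated and left open.} Your entrywise reading of the displayed matrix is accurate: $S_d(j-d+2-r)$ in column $1$ and $S_d(j-2d+s-r+1)$ in columns $s\ge 2$. Your inductive step is also correct and is the same induction the paper uses. The paper's displayed product has the row form (row $1\mapsto$ row $1$ $+$ row $d$, the other rows shifted down), so the recurrence is needed only in row $1$. Your column version needs it only in column $1$, and identities (a)--(c) together with the range check $j-d+3-r\ge 4-2d$ are fine.

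The problem is the $j=1$ case. You wrote the first column at $j=1$ as $S_d(0),S_d(-1),\dots,S_d(1-d)$, but that is its value at $j=d-1$. With your own formula, the entries at $j=1$ are $S_d(3-d-r)$ for $r=1,\dots,d$, that is, $S_d(2-d),S_d(1-d),S_d(-d),\dots,S_d(3-2d)$. By the values you correctly tabulated ($S_d=1$ on $[1-d,0]$ and at $1-2d$, and $0$ elsewhere on $[3-3d,-1]$), these equal $1,1,0,\dots,0$. That is exactly the first column of $F_d$.

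The same check works for $s\ge 2$. The entry $S_d(2-2d+s-r)$ has index in $[4-3d,\,1-d]$. There $S_d$ equals $1$ only at $1-d$, which means $r=1$ and $s=d$, and at $1-2d$, which means $s=r-1$ with $r\ge 3$. These are precisely the remaining ones of $F_d$.

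So there is no mismatch and no index pattern left to ``pin down''. As written, however, your proposal declares the base case an unresolved main obstacle and contemplates altering the formula, which would amount to proving a different statement. Replace that paragraph with the finite check above and drop the hedging about which side to multiply on (either works); the proof is then complete.
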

\begin{proof}
    We prove the lemma by induction. For $j=1$, this is true by the definition of the extended cutting times. Suppose that
    the claim is true for $j=k$. Note that 
    \begin{align*}
        F_d^{k+1}& =F_d^k \cdot F_d \\ 
        &= \begin{bmatrix}
            S_d(k-d+1)+S_d(k-2d+2)  & \cdots & S_d(k-d)+S_d(k-2d+1) \\
            S_d(k-d+1)  & \cdots & S_d(k-d) \\
            \vdots   & \ddots & \vdots \\
            S_d(k-2d+3)  & \cdots & S_d(k-2d+2)
        \end{bmatrix} .
    \end{align*}
    The lemma follows from \cref{eq:exte_recur}.
\end{proof}

Knowing the incidence matrices of the Bratteli diagram $B_d$, we can obtain explicitly the number of finite paths in $B_d$
from $v_0$ arriving at any other vertex in the diagram. For $ 1 \le \ell \le d$, put $N_j(\ell) \= N_j(v_j(\ell))$.

\begin{lemma}
    \label{lem:num_path_in_X_d}
    For every $1 \le j < d$, we have 
    \[ \vec{N}_j =
    \begin{bmatrix}
        1 & 1 & \cdots &1
    \end{bmatrix}
    \in \cM_{1 \times (j+1)}.
    \]
    For $j \geq d$, 
    \[ \vec{N}_j = 
    \begin{bmatrix}
        S_d(Q_d(j)+1) & S_d(Q_d(j)-d+2) & S_d(Q_d(j)-d+3) & \cdots & S_d(Q_d(j)) 
    \end{bmatrix}
    \in \cM_{1 \times d}.
    \]
    Thus, $N_j(1) = S_d(j-d+1)$, and for every $2 \le \ell \le d$, $N_j(\ell) =S_d(j+\ell-2d)$.
\end{lemma}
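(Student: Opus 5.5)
The plan is to use the recursion $\vec{N}_j=\vec{N}_{j-1}F_{j,d}$ from \cref{subsec:Matrix_BV_syst}, with $\vec N_0=[1]$ because $V_0$ is a single vertex.

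For $1\le j<d$, I would induct on $j$. By \cref{lem:inc_mtx}, the matrix $F_{j,d}$ has exactly one $1$ in each column, in rows $1,1,2,\dots,j$. Hence an all-ones row vector of length $j$ is sent to an all-ones row vector of length $j+1$. Equivalently, one can read this off from \cref{lem:first_level_telescop} applied to the partial products $F_{1,d}\cdots F_{j,d}$, since these are the same matrices.

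For $j\ge d$, I would first set $j=d-1$, where $\vec{N}_{d-1}=[1\ 1\ \cdots\ 1]\in\cM_{1\times d}$ by \cref{lem:first_level_telescop}. Then $\vec{N}_j=\vec{N}_{d-1}F_d^{\,j-d+1}$. By \cref{lem:powers_of_F_d} with exponent $m=j-d+1\ge1$, the $\ell$th entry of $\vec N_j$ is the $\ell$th column sum of $F_d^m$.

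The first column of $F_d^m$ is $S_d(m-d+1),S_d(m-d),\dots,S_d(m-2d+2)$. Its sum is $\sum_{i=m-2d+2}^{m-d+1}S_d(i)$. Telescoping \cref{eq:exte_recur} backwards gives $S_d(m)=S_d(m-1)+S_d(m-d)$, and iterating on the first term $d$ times gives
\[S_d(m)=S_d(m-d)+\sum_{i=m-2d+1}^{m-d}S_d(i).\]
I would rather use the cleaner identity obtained by summing the recurrence over $d$ consecutive indices:
\[\sum_{i=a-d+1}^{a}S_d(i)=S_d(a+d)-S_d(a).\]
This follows by summing $S_d(i+d)-S_d(i+d-1)=S_d(i)$ for $i=a-d+1,\dots,a$, using \cref{eq:exte_recur} throughout the extended range. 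This is the same idea as \cref{lem:sum_cut_times}.

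With $a=m-d+1$, the first column sum becomes $S_d(m+1)-S_d(m-d+1)=S_d(m-d+2)$, again by \cref{eq:exte_recur}. Since $m-d+2=j-2d+3$, I would need to reconcile this with the claimed value $S_d(Q_d(j)+1)=S_d(j-d+1)$. That means tracking indices carefully, and I expect this bookkeeping to be the main obstacle. I would recheck at $j=d$: there $\vec N_d=[1\cdots1]F_d=[2,1,\dots,1]$, and the claimed $S_d(1)=2$ and $S_d(0)=1$ match.

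So I would verify the general formula by a direct induction on $j\ge d$ rather than by column sums. Assume $\vec N_{j}=[S_d(j-d+1),S_d(j-2d+2),\dots,S_d(j-d)]$. Multiplying by $F_d$ gives:
\begin{itemize}
\item first entry $N_j(1)+N_j(2)=S_d(j-d+1)+S_d(j-2d+2)=S_d(j-d+2)$;
\item for $2\le\ell<d$, entry $\ell$ equal to $N_j(\ell+1)=S_d(j+\ell+1-2d)$;
\item last entry $N_j(1)=S_d(j-d+1)$.
\end{itemize}
This is exactly the claimed formula at $j+1$, using \cref{eq:exte_recur} and $Q_d(j)=j-d$. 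The base case $j=d$ was checked above.

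The final sentence of the lemma is then the restatement $Q_d(j)+1=j-d+1$ and $Q_d(j)-2d+\ell+\dots=j+\ell-2d$. To do this I would read the $\ell$th entry of the displayed vector as $S_d(Q_d(j)-d+\ell)$ for $\ell\ge2$.
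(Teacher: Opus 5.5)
Your final argument, the one-step induction for $j\ge d$, is correct, but it takes a different route from the paper. The paper writes $\vec N_j=\vec N_{d-1}F_d^{\,j-d+1}$ and reads the entries off the closed form of $F_d^{m}$ in \cref{lem:powers_of_F_d}. Each $N_j(\ell)$ is then a column sum of $d$ consecutive extended cutting times. You never compute $F_d^m$. Instead you propagate the row vector through $\vec N_{j+1}=\vec N_jF_d$, using only that $F_d$ adds the first two entries into the first slot and otherwise shifts, together with the single recurrence \cref{eq:exte_recur}. Your version is more self-contained and avoids any $d$-term summation identity. The paper's version yields the whole matrix $F_d^m$, which it also uses to identify $\beta_d$.

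There are two things to fix.

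First, the ``discrepancy'' that made you drop the column-sum computation is an arithmetic slip. Applying \cref{eq:exte_recur} at index $m+1$ gives $S_d(m+1)-S_d(m+1-d)=S_d(m)$, not $S_d(m-d+2)$. So the first column sum of $F_d^m$ is $S_d(m)=S_d(j-d+1)$, which is exactly the claimed $N_j(1)$. In the same way, taking $a=m-2d+\ell$ in your identity, column $\ell\ge 2$ sums to $S_d(m-d+\ell)-S_d(m-2d+\ell)=S_d(m-d+\ell-1)=S_d(j+\ell-2d)$. There is no index mismatch to reconcile, and the column-sum route (the paper's) goes through as is.

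Second, your base case $j=d$ only checks the first and last entries. For $d\ge 3$ the middle entries of the claimed vector are $S_d(2-d),\dots,S_d(-1)$, so you must invoke the extension of the cutting times. From \cref{eq:exte_recur} and items (i)--(iii):
\begin{itemize}
\item $S_d(k)=0$ for $-2d+2\le k\le -d$;
\item $S_d(-d+1)=S_d(-d)+S_d(-2d+1)=1$;
\item hence $S_d(k)=1$ for $-d+1\le k\le 0$.
\end{itemize}
With these values the claimed vector at $j=d$ is $[2,1,\dots,1]$, matching your computation. Alternatively, start the induction at $j=d-1$, where the formula reads $[S_d(0),S_d(1-d),\dots,S_d(-1)]=[1,\dots,1]$.
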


\begin{proof}
    For $1 \le j < d$, the lemma follows from \cref{lem:first_level_telescop}. For $j \ge d$, writing $j = d + j'$ 
    we have $\vec{N}_j = \vec{N}_{d-1}F_d^{j'+1}$. Then the lemma follows from \cref{lem:inc_mtx}, and
    \cref{lem:powers_of_F_d}.
\end{proof}

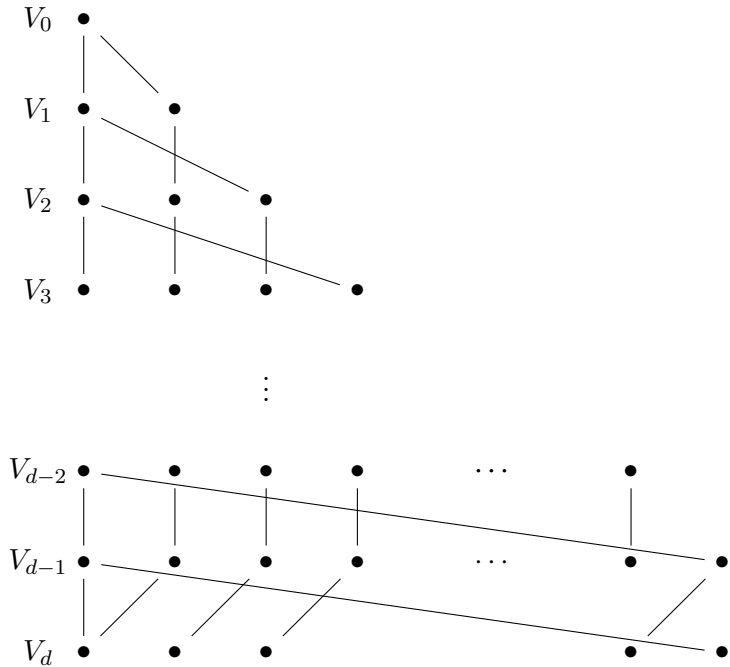
\begin{figure}[b]
    \begin{tikzpicture}[scale=1.2]
        \node at (-0.5,16) {$V_0$};
        \node at (0,16) (a1) {$\bullet$};
        \node at (-0.5,15) {$V_1$};
        \node at (0,15) (a2) {$\bullet$}; \node at (1,15) (b2) {$\bullet$};
        \draw (a1) -- (a2); \draw (a1) -- (b2); 
        \node at (-0.5,14) {$V_2$};
        \node at (0,14) (a3) {$\bullet$}; \node at (1,14) (b3) {$\bullet$}; \node at (2,14) (c3) {$\bullet$};
        \draw (a2) -- (a3); \draw (a2) -- (c3); \draw (b2) -- (b3); 
        \node at (-0.5,13) {$V_3$};
        \node at (0,13) (a4) {$\bullet$}; \node at (1,13) (b4) {$\bullet$}; \node at (2,13) (c4) {$\bullet$}; 
        \node at (3,13) (d4) {$\bullet$};
        \draw (a3) -- (a4); \draw (a3) -- (d4); \draw (b3) -- (b4); \draw (c3) -- (c4); 
        
        \node at (2,12)  {$\vdots$};
        
        \node at (-0.5,11) {$V_{d-2}$};
        \node at (0,11) (ad-1) {$\bullet$}; \node at (1,11) (bd-1) {$\bullet$}; \node at (2,11) (cd-1) {$\bullet$}; 
        \node at (3,11) (dd-1) {$\bullet$}; \node at (4.5,11) {$\cdots$}; \node at (6,11) (ed-1) {$\bullet$};
        \node at (-0.5,10) {$V_{d-1}$};
        \node at (0,10) (a) {$\bullet$}; \node at (1,10) (b) {$\bullet$}; \node at (2,10) (c) {$\bullet$}; 
        \node at (3,10) (d) {$\bullet$}; \node at (4.5,10) {$\cdots$}; \node at (6,10) (e) {$\bullet$}; \node at (7,10) (f) {$\bullet$};
        \draw (ad-1) -- (a); \draw (ad-1) -- (f); \draw (bd-1) -- (b); \draw (cd-1) -- (c); \draw (ed-1) -- (e);
        \draw (dd-1) -- (d);
        \node at (-0.5,9) {$V_{d}$};
        \node at (0,9) (A) {$\bullet$}; \node at (1,9) (B) {$\bullet$}; \node at (2,9) (C) {$\bullet$}; 
        \node at (4.5,11) {$\cdots$}; \node at (6,9) (E) {$\bullet$}; \node at (7,9) (F) {$\bullet$};
	\draw (a) -- (A); \draw (a) -- (F); \draw (b) -- (A); \draw (c) -- (B); \draw (d) -- (C); \draw (f) -- (E);
    \end{tikzpicture}
    \caption{First $d$ levels of the Bratteli diagram associated with the collection $\{M_{d,k}\}_{k \geq 0}$ of covers 
    of $T_d|_{\omega(c)}$. Each collection of vertices is arranged in increasing order. For example, from left to right, 
    the vertices in $V_3$ are $v_3(1)$, $v_3(d-2)$, $v_3(d-1)$, and $v_3(d)$.}
    \label{fig:Bratelli_diagram_first_d_levels}
\end{figure}

Now, in order to define a partial order on $E$, it is enough to define a linear order for every $k \ge 1$ on the set
\[ t^{-1}(v_{k}(1))  = \{ (v_{k-1}(1),v_k(1)), (v_{k-1}(2),v_k(1)) \}.\] 
For $k \le d-1$, $t^{-1}(v_k(1))$ is a singleton, so ordering is trivial. For any $k \ge d$, we consider the order 
\begin{equation}
    \label{eq:part_oder_in_E}
    (v_{k-1}(1),v_k(1)) < (v_{k-1}(2),v_k(1)),
\end{equation}
see \cref{fig:Bratelli_diagram_first_d_levels}. With this partial order, the set $X_{B_d}$ has one minimal path 
$x_{min} = x_1x_2 \ldots$ with 
$x_1 = (v_0,v_1(1))$ and for $k \geq 2$, $x_k = (v_{k-1}(1),v_k(1))$, and $d$ distinct maximal paths. Thus, the Vershik 
map $V_{B_d}$ is well defined on $X_{B_d}$.


\subsection{Projecting $X_{B_d}$ onto $\omega_f(c)$.}\label{sebsection:projecting_the_Bratteli_diagram_to_the_interval}

We start by defining the projection of cylinders in $X_{B_d}$ to the domain of $f$. The idea of how to define the 
projection is the following. Recall that for every integer 
$k \ge 1$, each vertex $v_k(i) \in V_k$ corresponds to the tower $\cT_k(i)$ from $M_{d,k}$, where 
$i = 1, \max\{2, d-k+1\}, \ldots, d$. By construction, if $i \ne 1,2,$ we have $\cT_{d,k}(i) = \cT_{d,k+1}(i-1)$, for 
$i = 1$ we have $\cT_{d,k+1}(1) \subseteq \cT_{d,k}(1) \cup \cT_{d,k}(2)$, and $\cT_{d,k+1}(d) \subseteq \cT_{d,k}(1)$.
So, for each finite path $e_1 \ldots e_n$ on $B_d$, we can project the cylinder $[e_1 \ldots e_n]$ to the tower 
$t(e_n) \in V_n$.
Thus, each path from $v_0$ to $v_k(i)$ needs to be projected to an element of $\cT_{d,k}(i)$. Since we want this 
projection to conjugate the action of $V_{B_d}$ on $X_{B_d}$ with the action of $f$ on $\omega_f(c)$, each minimal 
path from $v_0$ to $v_k(i)$ needs to be projected to $J_{i,k}$, the "base" of the tower $\cT_{d,k}(i)$, and each 
successor should be projected to the corresponding image of the base set.

Now we define the projection explicitly. For every integer $k \ge d$ and every $1 \le i \le d$, denote by $\cN_k(i)$
the set of all finite paths on $B_d$ from $v_0$ to $v_k(i)$. For $e_1 \ldots e_k \in \cN_k(i)$, put 
\begin{equation}
    \label{eq:eta_k}
    \eta(e_1 \ldots e_k) \= \# \{ a_1 \ldots a_k \in \cN_k(i) \colon a_1 \ldots a_k < e_1 \ldots e_k \}.
\end{equation}
Thus, $\eta (e_1 \ldots e_k)$ is the number of finite paths smaller than $e_1 \ldots e_k$ with respect with the lexicographic order 
induced by the edge ordering \cref{eq:part_oder_in_E}. Since $v_n(1)$ for $n \ge d$ is the only vertex with more than 
one incoming edge, we only need to keep track on the edges $e_m$ for which $t(e_m) = v_m(1)$. For $m \ge d$, put 
\[ \theta_m(e_1, \ldots e_k) = 
\begin{cases}
    1 & \text{ if } e_m = (v_{m-1}(2),v_m(1)), \\
    0 & \text{ otherwise }.
\end{cases} \]
Then we can write
\begin{equation}
    \label{eq:eta_k_formula}
    \eta(e_1 \ldots e_k) = \sum_{m = d}^{k} \theta_m(e_1 \ldots e_k)N_{m-1}(1) =\sum_{m = d}^{k} \theta_m(e_1 \ldots e_k)S_d(m-d), 
\end{equation}
where the last equality comes from \cref{lem:num_path_in_X_d}. This is because, at level $m$, choosing the largest incoming edge into $v_m(1)$ contributes exactly the number of paths from $v_0$ to $v_{m-1}(1)$, as the tail after level $m$ is fixed in lexicographic order.
Thus, if we let $i(e_k) \in \{ 1, 2, \ldots, d \}$ be so
that $t(e_k) = v_k(i(e_k))$, we define the \emph{projection} of the cylinder $[e_1 \ldots e_k]$ to $I$ by 
\begin{equation}
    \label{eq:cylinder_projection}
    a([e_1 \ldots e_k]) \= J_{i(e_k),k}^{\eta(e_1 \ldots e_k)}.
\end{equation}
By the construction of the Bratteli diagram $B_d$, using the tower refinement from $M_{d,k}$ to $M_{d,k+1}$, we have that
if $e_1 \ldots e_k e_{k+1}$ is a finite path on $B_d$, then $a([e_1 \ldots e_ke_{k+1}]) \subseteq a([e_1 \ldots e_k])$.
Since our map $f$ does not have wandering intervals, we can define the projection 
\[\pi \colon X_{B_d} \longrightarrow \omega_f(c)\] by
\begin{equation}
    \label{eq:Projection_definition}
    \pi(e_1e_2 \ldots) \= \bigcap_{n=1}^{\infty} a([e_1 \ldots e_n]).
\end{equation}
By item $(i)$ in \cref{theo:covers_of_the_omega_set}, this projection map is well defined. For $(e_k)_{k \ge 1} \in X_{B_d}$, we put $\theta_m((e_k))_{k\ge 1} \= \theta_m(e_1 \ldots e_m)$, and $\eta_m((e_k)_{k \ge 1}) = \eta(e_1 \ldots e_m)$.

Before proving \cref{theo:B-V_system_from_the_cover}, we will describe the structure of maximal paths.
First we observe that, since for every $\ell \in \{ 1,2, \ldots ,d\}$ there is a unique path on $B_d$ from $v_0$ to $v_{d-1}(\ell)$, each maximal path is characterized by the incoming edge at the vertex set $V_{d-1}.$ For every $\ell \in \{1,2, \ldots,d\}$, we will denote by $x^{\max,\ell} = (x_k(\ell))_{k \ge 1} \in X_{B_d}$ the unique maximal path with $t(x_{d-1}(\ell)) = v_{d-1}(\ell)$.

\begin{lemma}
    \label{lem:maximal_paths_in_B_d}
    For every $\ell \in \{1,2, \ldots, d\}$ and every $k \ge 1$ for which $\ell + kd -2 \neq d-1$, we have \[ \theta_{\ell + kd-2}(x^{\max,\ell}) = 1. \] And thus,
    \[\eta_{\ell +kd -2}(x^{max,\ell}) = S_d(\ell +(k-1)d -1)-1.\] 
\end{lemma}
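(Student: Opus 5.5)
The plan is to first pin down the maximal path $x^{\max,\ell}$ completely, by reading off which edges of $B_d$ are maximal. Then the formula for $\eta$ follows from \cref{eq:eta_k_formula} by a telescoping induction using \cref{eq:cut_times_formula}.

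\emph{Step 1: locating the path.} For $k \ge d$ the edges of $E_k$ are the following.
\begin{itemize}
\item $(v_{k-1}(1),v_k(1))$, which is minimal, and $(v_{k-1}(2),v_k(1))$, which is maximal, by \cref{eq:part_oder_in_E}.
\item $(v_{k-1}(1),v_k(d))$, which is the only edge into $v_k(d)$.
\item $(v_{k-1}(j),v_k(j-1))$ for $3\le j\le d$, each the only edge into its target.
\end{itemize}
Since $t^{-1}(v_k(1))$ has two elements for $k\ge d$, a maximal path can never use the minimal edge $(v_{k-1}(1),v_k(1))$ when $k \ge d$. The forward continuation of a maximal path is therefore forced. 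From $v_{k-1}(1)$ it must go to $v_k(d)$, and from $v_{k-1}(i)$ with $i\ge2$ it must go to $v_k(i-1)$. Starting from $v_{d-1}(\ell)$, the vertex index along $x^{\max,\ell}$ therefore decreases by one at each level and wraps from $1$ to $d$. Hence the path visits $v_m(1)$ exactly when $m\equiv \ell-2 \pmod d$ with $m\ge d-1$, that is, at $m=\ell+kd-2$ for $k\ge1$. At each such level with $m\ge d$, which is exactly the condition $m\ne d-1$, the path enters $v_m(1)$ through $(v_{m-1}(2),v_m(1))$. So $\theta_m(x^{\max,\ell})=1$ there. At every other level $m \ge d$ the path is not at $v_m(1)$, so $\theta_m(x^{\max,\ell})=0$. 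This proves the first claim.

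\emph{Step 2: the value of $\eta$.} By \cref{eq:eta_k_formula}, $\eta_{\ell+kd-2}(x^{\max,\ell})$ is the sum of $S_d(m-d)$ over those levels $m=\ell+k'd-2$ with $k'\le k$ and $m\ge d$. Each such term equals $S_d(\ell+(k'-1)d-2)$. I will prove the closed form by induction on $k$, treating the two values of $\ell$ separately at the base.
\begin{itemize}
\item If $2\le\ell\le d$, the base case is $k=1$. The sum has the single term $S_d(\ell-2)=\ell-1$, and this equals $S_d(\ell-1)-1$ because $S_d(j)=j+1$ for $j\le d$.
\item If $\ell=1$, the level $m=d-1$ is excluded, so the base case is $k=2$. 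The single term is $S_d(d-1)=d$, which equals $S_d(d)-1$.
\end{itemize}
For the inductive step, put $T_k\= S_d(\ell+(k-1)d-1)$. Applying \cref{eq:cut_times_formula} at index $\ell+(k-1)d-1$ gives
\[ T_k - T_{k-1} = S_d(\ell+(k-1)d-2). \]
This is exactly the new summand added at level $\ell+kd-2$. For the indices where the induction is actually used, namely $k\ge2$ when $\ell \ge 2$ and $k\ge3$ when $\ell=1$, we have $\ell+(k-1)d-1>d$, so the recurrence is valid there. Therefore $\eta$ increases by $T_k-T_{k-1}$ at each step, and the identity $\eta=T_k-1$ propagates from the base case.

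\emph{Main obstacle.} The delicate point is Step 1. One has to check carefully from the edge list, including the early levels $k<d$ shown in \cref{fig:Bratelli_diagram_first_d_levels}, that forward continuation along a maximal path really is forced. One also has to handle the index bookkeeping for the exceptional case $\ell=1$, $k=1$, which is precisely why the level $d-1$ is excluded in the statement. Once the visiting pattern of $v_m(1)$ is established, Step 2 is a routine telescoping argument.
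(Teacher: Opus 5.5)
Your proposal is correct and follows essentially the paper's argument. You locate where the maximal path $x^{\max,\ell}$ is forced through the stationary part of $B_d$, which shows it enters $v_m(1)$ via $(v_{m-1}(2),v_m(1))$ exactly at $m=\ell+kd-2$; you then sum $S_d(m-d)$ over those levels and telescope with \cref{eq:cut_times_formula}. Your version is slightly more careful than the paper's: it correctly names $(v_{k-1}(1),v_k(d))$ as the maximal continuation, where the paper's text says the opposite, and it handles the $\ell=1$ base case separately at $k=2$.
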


\begin{proof}
    First, observe that for every $n \ge d-1$, there is a unique maximal path on $B_d$ from $v_n(1)$
    to $V_{n+d}$. We can write this path explicitly, starting from $v_k(1)$ there are two outgoing edges 
    $(v_k(1),v_{k+1}(1))$ and $(v_k(1),v_{k+1}(d))$ with only the former being a maximal path, and from
    $v_{k+1}(d)$ there is a unique path landing at the vertex $v_{k+d}(1)$. 

    Now, for each $ \ell \in \{1,2, \ldots, d\}$, the maximal path $x^{\max,\ell}$ enters the set $V_{d-1}$ at the 
    vertex $v_{d-1}(\ell)$ and thus, for $\ell \neq 1$ the smallest indexed edge in $x^{\max,\ell}$ with terminal vertex
    $v_k(1)$ for $k >d-1$ is $x_{\ell+d-2}(\ell)$. In particular, $\theta_{\ell + kd-2}(x^{\max,\ell}) = 1$. 
    For $\ell =1$, we have that the smallest indexed edge in $x^{\max,1}$ with terminal vertex $v_k(1)$ for $k >d-1$
    is $x_{2d-1}(1)$, in particular, $\theta_{kd-1}(x^{\max,1}) = 1$. This proves the first part of the lemma.

     We prove the second part using induction on $k$. By definition, for $k =1$, we obtain 
     \[  \eta_{\ell + d -2}(x^{\max,\ell}) =  S_d(\ell-2) = S_d(\ell-1) -1, \]
     so the result is true in this case. Now, suppose it is true for some $k \ge 1$. Then
     \begin{align*}
         \eta_{\ell + (k+1)d -2}(x^{\max,\ell}) &= \sum_{m=0}^{k} S_d(\ell +md -2) \\ 
         &= S_d(\ell +(k-1)d-1)-1 + S_d(\ell+kd-2) \\ 
         &=S_d(\ell +kd -1)-1,
     \end{align*}
     where in the second equality, we use the induction hypothesis, and in the third equality, we use 
     \cref{eq:cut_times_formula}. This concludes the proof of the lemma.
\end{proof}

Now we are ready to prove \cref{theo:B-V_system_from_the_cover}.

\begin{proof}[Proof of \cref{theo:B-V_system_from_the_cover}]
    We will prove that the map $\pi$ defined in \cref{eq:Projection_definition} 
    is a semi-conjugacy between the Bratteli-Vershik system $(X_{B_d}, V_{B_d})$ and $(\omega_f(c),f)$. 

    The continuity of the $\pi$ follows by its definition, the fact that the cylinders form a basis for the topology
    of $X_{B_d}$, and \cref{theo:covers_of_the_omega_set}.

    Now we prove that the map $\pi$ is onto. By \cref{eq:cylinder_projection}, we have that for every $k \in \N$
    and every $i = 1, \max\{2, d-k+1\}, \ldots, d$,
    \[ \pi ( \{ x \in X_{B_d} \colon x_1\ldots x_k \in \cN_k(i)\}) = \cT_{d,k}(i) \cap \omega_f(c). \]
    Taking the union over all $i = 1, \max\{2, d-k+1\}, \ldots, d$ above, we obtain
    \[ \pi(X_{B_d}) = M_{d,k} \cap \omega_f(c) = \omega_f(c), \] and thus the map $\pi$ is onto.
    
    So, we are left to prove that the map $\pi$ conjugates the actions of $V_{B_d}$ and $f$. Thus, we want to prove that
    for any $(e_j)_{j \ge 1} \in X_{B_d}$, we have 
    \begin{equation}
        \label{eq:conjugation_f_and_B_Bd}
        \pi(V_{B_d}(e_j)_{j \ge 1}) = f(\pi(e_j)_{j \ge 1}).
    \end{equation}
    
    We will consider two cases. 

    \noindent \textbf{Case 1.} Consider $x^{\max,\ell} = (x_k(\ell))_{k \ge 1}$ with $\ell \in \{1,2, \ldots, d\}$. Recall that every maximal path is mapped by $V_{B_d}$ to the unique minimal path on $B_d$, thus $V_{B_d}(x^{\max,\ell}) = x^{\min}$. Write $x^{\min} = (x_k^0)_{k \ge 1}$. First, we observe that for every $k \ge 1$
    \[ \pi([x_1^0 \ldots x_k^0]) = I_k \cap \omega_f(c), \]
    and since $\cap_{k \ge 1} I_k = \{ c\}$, we have that $ \pi(x^{\min}) = c$. Now, by \cref{lem:maximal_paths_in_B_d}, for every $\ell \in \{1 ,\ldots, d\}$ and every $k \ge 1$ for which $\ell + kd -2 \ne d-1$, we have
    \[ \pi([x_1(\ell) \ldots x_k(\ell)]) = I_{\ell +kd -2}^{S_d(\ell +(k-1)d-1)-1} \ \cap \ \omega_f(c), \]
    and thus
    \[f(\pi(x^{\max,\ell})) \subset \bigcap_{k \ge 1} [c_{S_d(\ell+ (k-1)d-1)}, c_{S_d(\ell +kd-1)}] \ \cap \ \omega_f(c) = \{c\}.\] This proves \cref{eq:conjugation_f_and_B_Bd} in this case.

    \noindent \textbf{Case 2.} Consider $(e_k)_{k \ge 1} \in X_{B_d} \setminus X^{\max}$. Let $j_0 \ge d$ be the 
    smallest integer for which $e_{j_0}$ is not a maximal edge, thus $e_{j_0} = (v_{j_0-1}(1),v_{j_0}(1))$ and 
    $e_1 \ldots e_{j_0-1}$ is a maximal path from $v_0$ to $v_{j_0-1}(1)$. By \cref{lem:maximal_paths_in_B_d}, with 
    $\ell + kd-2$ replaced by $j_0 -1$, we get
    \[\eta_{j_0}((e_j)_{j \ge 1}) = S_d(j_0-d)-1.\]
    Writing $x \= \pi(e_j)_{j \ge 1}$, the above implies that $x \in I_{j_0}^{S_d(j_0-d)-1}$, and
    \[f(x) \in I_{j_0}^{S_d(j_0-d)} = f(\pi([e_1 \ldots e_{j_0}])) \in \cT_{d,j_0}(1),\]
    and thus
    \begin{equation}
        \label{eq:conj_eq_1}
        f(x) \in f(\pi([e_1 \ldots e_{j_0 + k}]))
    \end{equation}
    for every $k \ge 0$.
     
    Put $V_{B_d}(e_j)_{j \ge 1} = g_1 \ldots g_{j_0}e_{j_0+1} \ldots$. Thus, $g_{j_0} = (v_{j_0-1}(2), v_{j_0}(1))$ and 
    $g_1 \ldots g_{j_0-1}$ is the unique minimal path from $v_0$ to $v_{j_0-1}(2)$. Then, 
    $\theta_n(g_1 \ldots g_{j_0}) = 0$ for $n = 1, \ldots j_0-1$, and $\theta_{j_0}(g_1 \ldots g_{j_0}) = 1$, so 
    \[ \eta_{j_0}(V_{B_d}(e_j)_{j \ge 1}) = S_d(j_0-d) = \eta_{j_0}((e_j)_{j \ge}) + 1. \]
    This implies that for every $k \ge d$,
    \begin{equation}
        \label{eq:conj_eq_2}
        \pi (V_{B_d}((e_j)_{j \ge 1})) \in f(\pi([e_1 \ldots e_k])).
    \end{equation}
    Since $|f(\pi([e_1 \ldots e_k]))| \rightarrow 0$ as $k \to \infty$, \cref{eq:conj_eq_1} and \cref{eq:conj_eq_2} imply 
    \[f(\pi (e_j)_{j \ge 1}) = \pi(V_{B_d}((e_j)_{j \ge 1})).\]
    This concludes the proof of the theorem.
\end{proof}

\subsection{The push-forward measure.}
In this section, we will prove \cref{theo:measure_of_the_covers}. We start by computing the unique ergodic $V_{B_d}-$invariant probability measure of the Bratteli-Vershik system $(X_{B_d},V_{B_d})$ described in \cref{subsection:BV_system_associated_to_Mdk}. The ideas we use for this are standard and we include a detailed computation for the convenience of the reader; we refer the reader to \cite{Bezuglyi_Kwiat_med_Solomyak_Inv_meas_stationary_Brat_diagrams_ETDS_2010} or \cite[Chapter 6]{Bruin2022_Top_and_erg_symb_dyn}, and references therein. We push this measure forward to our interval map using the projection defined in \cref{sebsection:projecting_the_Bratteli_diagram_to_the_interval}.

For every $\ell \in \{1, \ldots, d\}$, write $x^{\max,\ell} = (x_j(\ell))_{j \ge 1}$. For $j \ge d-1$, we have 
\begin{equation}
    \label{eq:paths_at_a_vertex}
    \{ e_1 \ldots e_j \colon t(e_j) = t(x_j(\ell))\} = 
    \{ V_{B_d}^{-n}(x^{\max, \ell}) \colon n \in \{ 0, \ldots , N_j(\ell)-1 \} \}
\end{equation}
and thus
\begin{align}
    \begin{aligned}
    \label{eq:KR_partition_of_X_B}
    \cP_j &\= \{ V_B^{-n}([x_1(\ell) \ldots x_j(\ell)]) \; \colon n \in \{0, \ldots , N_j(\ell) -1\}, \ell \in \{1, \ldots, d\} \} \\
    &= \{ [e_1\ldots e_{j}] \; \colon e_1\ldots e_{j} \text{ is a path starting at } v_0 \}, 
    \end{aligned}
\end{align}
is a partition  of $X_{B_d}$ into clopen sets, and $\cP_{j+1}$ is finer than $\cP_j$. It follows that each invariant measure 
$\mu$ of the system $(X_{B_d},V_{B_d})$ is determined by its values on the elements of the partition $\cP_j$.

Denote by $\nu_d$ the unique ergodic $V_{B_d}$-invariant probability measure of the system $(X_{B_d},V_{B_d})$. As 
mentioned before, the measure $\nu_d$ is uniquely determined by its values on the cylinder sets
$[e_1 \ldots e_j]$, where $j \in \N$, and $e_1 \ldots e_j$ is a finite path on $X_{B_d}$ starting at $v_0$. Moreover, since the measure $\nu_d$ is $V_{B_d}-$invariant, we have that for any fixed level $j \ge d-1$ (when the diagram becomes stationary), all cylinders entering the same vertex $v_j(\ell)$ have the same measure (see \cite[Section 2]{Bezuglyi_Kwiat_med_Solomyak_Inv_meas_stationary_Brat_diagrams_ETDS_2010}).
To compute the value of $\nu_d$ on cylinders, we use \cite[Remark 3.9]{Bezuglyi_Kwiat_med_Solomyak_Inv_meas_stationary_Brat_diagrams_ETDS_2010}. For $j \ge d-1$, 
put $\nu_d(j,\ell) \= \nu_d([x_1(\ell) \ldots x_j(\ell)])$ and $\vec{\nu}_d(j) \= (\nu_d(j,1), \nu_d(j,2), \ldots, \nu_d(j, d)) \in \R^d$. Then 
\[  \vec{\nu}_d(j) = C \beta_d^{-j} \vec{w}(\beta_d), \]
where $\beta_d$ is the Perron-Frobenius eigenvalue of the incidence matrix $F_{k,d}$ of the stationary levels of Bratteli diagram $X_{B_d}$, $\vec{w}(\beta_d)$ is the Perron-Frobenius eigenvector associated with the eigenvalue $\beta_d$, and $C$ is a normalization constant. From \cref{lem:powers_of_F_d}, we have that $\beta_d$ is the largest real root of the polynomial $x^d - x^{d-1}-1$, and 
\[\vec{w}(\beta_d) = (1, \beta_d^{-1}, \dots, \beta_d^{-(d-1)}).\] 
Now, in order to compute the normalization constant $C$, we can use the fact that there is a unique path on $B_d$ from $v_0$ to $v_d(\ell)$ for every $ \ell \in \{1, \ldots, d\}$. More explicitly,
\[ 1 = \sum_{i=1}^d \nu_d([x_1(i) \ldots x_{d-1}(i)]) = C \beta_d^{-(d-1)} \sum_{i=1}^d\beta_d^{-(i-1)}.\]
We observe that 
\[
\sum_{i=1}^d \beta_d^{-(i-1)}=\sum_{i=0}^{d-1}\beta_d^{-i}
=\frac{1-\beta_d^{-d}}{1-\beta_d^{-1}}
=\frac{\beta_d^{d}-1}{\beta_d^{d-1}(\beta_d-1)}.
\]
Using $\beta_d^{d}=\beta_d^{d-1}+1$, we have $\beta_d^{d}-1=\beta_d^{d-1}$, hence
\[
\sum_{i=1}^d \beta_d^{-(i-1)}=\frac{1}{\beta_d-1}.
\]
Therefore,
\[
1=\sum_{i=1}^d \nu_d([x_1(i)\ldots x_{d-1}(i)])
= C\beta_d^{-(d-1)}\sum_{i=1}^d \beta_d^{-(i-1)}
= C\frac{\beta_d^{-(d-1)}}{\beta_d-1}.
\]
Finally, since $\beta_d^{d}=\beta_d^{d-1}+1$ implies $\beta_d^{d-1}(\beta_d-1)=1$, we get
$\frac{\beta_d^{-(d-1)}}{\beta_d-1}=1$, and hence $C=1$, and we can write, for every $j \ge d-1$ and every $1 \leq  \ell \le d$, 
\begin{equation}
    \label{lem:meas_charact}
    \nu_d(j,\ell) = \frac{1}{\beta_d^{j + \ell -1}}.
\end{equation}
Finally, using \cref{lem:num_path_in_X_d}, for every $j \ge d-1$ and every path $e_1 \ldots e_j$ on $B_d$ starting from $v_0$, we have 
\begin{equation}
    \label{eq:acip_on_X_B_d}
    \nu_d([e_1 \ldots e_j]) = 
    \begin{cases}
        \beta_d^{-j} & \text{ if } \ t(e_j) =v_j(1) \\
        \beta_d^{-(j+\ell -1)} & \text{ if } \ t(e_j) = v_j(\ell) \text{ and } \ell \neq 1.
    \end{cases}
\end{equation}

\begin{proof}[Proof of \cref{theo:measure_of_the_covers}]
    Let $\mu_d$ be the unique ergodic $f-$invariant probability measure of $f$ supported on $\omega_f(c)$. Since the projection $\pi \colon X_{B_d} \longrightarrow \omega_f(c)$ is a homeomorphism outside a countable set (the only points with more than one coding are those whose path is eventually maximal, corresponding to backward orbits of $c$; this set is countable, this set of eventually maximal paths are projected onto the set of preimages of $c$) and thus is a measurable homeomorphism, we have that the push-forward measure $\pi_*(\nu_d) = \nu_d \circ \pi^{-1}$ is $f$-invariant and ergodic; by unique ergodicity it equals $\mu_d$. 
    Thus, the theorem follows from \cref{eq:acip_on_X_B_d}.
\end{proof}

\section{The Hausdorff dimension of $\omega_{T_{a_d}}(0)$} \label{sec:HD_proof}

Recall that $a_d$ is the unique parameter in $(0,2]$ for which the map $T_{a_d}=a_d(1-|x|)-1$ has kneading map 
\[Q_d(k) = \max \{0, k-d\}.\]
Here $c = 0$ is the turning point of $T_{a_d}$, and $c_n = T_{a_d}^n(c).$
In this section, we will prove \cref{theo:HD}.

Let $M_{d,k}$ be the cover defined in \cref{eq: union} for the map $T_{a_d}(x)$. For every $k \ge 1 $, set $D_k \= [c_{S_d(k)},c]$, and
put \[ \delta_k \= \max \{ |J_{i,k}^n| \colon i = 1, \ldots, d, \; \text{ and } \; n = 0, \ldots h_{i,k}-1 \}. \]
Clearly, $\delta_k \to 0$ as $k \to \infty$. 

Note that by definition, $D_{k'} \subset J_{1,k}$ for every $k' \ge k \ge 1$. Moreover, by \cref{eq:def_J_ik}, for 
$j = d$, we obtain that for every $0 < k < k'$,
\begin{equation}
    \label{lem: j and d and i}
    |J_{d,k'}| \le |D_{k'-1}| \le |J_{1,k}|.
\end{equation}
The following lemma gives us an estimate of the recurrence rate of the turning point for $T_{a_d}$.
\begin{lemma}\label{lem: diameter}
The limit
\[\lim_{k \to \infty}a_d^{S_d(k+d)} \prod_{j=1}^{d-1} |D_{k+j}|\]
exists and is strictly positive.
\end{lemma}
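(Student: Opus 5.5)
The plan is to extract from the piecewise linear structure of $T_{a_d}$ an exact linear relation among the lengths $|D_k| = |c_{S_d(k)} - c|$, to rewrite the ratio of consecutive terms of the product as $1+r$ with $r$ a ratio of $D$-lengths, and then to show that these ratios are summable.

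\textbf{Step 1 (exact recursion).} Fix $k \ge d$. The slope of $T_{a_d}$ has absolute value $a_d$ everywhere, and $c=0$ with $c_1 = T_{a_d}(c)$, so $|c_{S_d(k)+1} - c_1| = a_d |D_k|$. By \cref{lem: injectivity}, $T_{a_d}^{j}$ is monotone on $[c_{S_d(k)+1},c_1]$ for $j < S_d(Q_d(k+1))$, hence it multiplies lengths there by exactly $a_d^{j}$. Taking $j = S_d(Q_d(k+1))-1$ and using $S_d(k)+S_d(Q_d(k+1)) = S_d(k+1)$, I get
\[ |c_{S_d(k+1)} - c_{S_d(k+1-d)}| = a_d^{S_d(k+1-d)} |D_k| . \]
By \cref{eq:cut_time} (equivalently \cref{lem: injectivity}), the points $c_{S_d(k+1)}$ and $c_{S_d(Q_d(k+1))}$ lie on opposite sides of $c$. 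Therefore the left-hand side equals $|D_{k+1}| + |D_{k+1-d}|$. Putting $m = k+1-d$, this gives, for all large $m$,
\[ |D_m| + |D_{m+d}| = a_d^{S_d(m)} |D_{m+d-1}| . \]

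\textbf{Step 2 (telescoping).} Set $P_k := a_d^{S_d(k+d)}\prod_{j=1}^{d-1}|D_{k+j}|$. Then
\[ \frac{P_{k+1}}{P_k} = a_d^{S_d(k+d+1)-S_d(k+d)}\,\frac{|D_{k+d}|}{|D_{k+1}|} = a_d^{S_d(k+1)}\,\frac{|D_{k+d}|}{|D_{k+1}|}, \]
where the second equality uses $S_d(k+d+1)-S_d(k+d) = S_d(k+1)$ from \cref{eq:cut_times_formula}. Applying Step 1 with $m=k+1$, this ratio equals $1 + r_{k+1}$, where $r_m := |D_{m+d}|/|D_m|$. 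Hence $P_k = P_{k_0}\prod_{m=k_0+1}^{k}(1+r_m)$ for a fixed large $k_0$. Since every factor is at least $1$ and $P_{k_0}>0$, the limit exists in $(0,\infty]$. It remains to rule out $+\infty$, which is equivalent to $\sum_m r_m < \infty$.

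\textbf{Step 3 (summability).} This is the main point, but it should follow from Step 1 itself. By \cref{eq:close_retur_symm_tent_map}, the sequence $|D_m|$ is strictly decreasing, so $r_m \le |D_{m+d-1}|/|D_m|$. Step 1 gives $a_d^{S_d(m)}|D_{m+d-1}| = |D_m|+|D_{m+d}| \le 2|D_m|$, and therefore $r_m \le 2a_d^{-S_d(m)}$. Since $a_d>1$ and $S_d(m)\ge m+1$ (indeed it grows exponentially, like $\beta_d^m$), the series $\sum_m r_m$ converges. Consequently $\prod(1+r_m)$ converges to a finite positive number, and so does $P_k$.

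The only delicate points are the index range where \cref{lem: injectivity} applies (namely $k\ge d$, which is harmless for a limit) and the opposite-sides claim in Step 1. I would check the latter directly against the block formula \cref{eq:cut_time}: it says that $\e_{S_d(k+1)}$ differs from $\e_{S_d(Q_d(k+1))}$.
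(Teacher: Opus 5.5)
Your proof is correct and is essentially the paper's argument: the same exact recursion $a_d^{S_d(k+1-d)}|D_k| = |D_{k+1}|+|D_{k+1-d}|$, obtained from \cref{lem: injectivity} and the opposite-sides property, followed by a telescoping product whose convergence reduces to $\sum_m a_d^{-S_d(m)}<\infty$. Your factor $1+r_m$ is exactly $C_{m+d}^{-1}$ in the paper's notation, and working directly with the ratio $P_{k+1}/P_k$ gives cleaner index bookkeeping than the paper's telescoping identity.
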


\begin{proof}
Let $k > d$.
By definition, we have that $T_{a_d}(D_k)=J_{1,k}^1$, by \cref{lem: injectivity}, we have 
\[T_{a_d}^{S_d(k+1-d)}(D_k)=\left[c_{S_d(k+1)},c_{S_d(k+1-d)} \right].\]
By \cref{eq:cut_time}, $c_{S_d(k+1)}$ and $c_{S_d(k+1-d)}$ are on opposite sides of $c$.
Then $D_{k+1} \cap D_{k+1-d}=\{c\}$, so \[T_{a_d}^{S_d(k+1-d)}(D_k)=D_{k+1} \cup D_{k+1-d}.\]
Since $|T_{a_d}^{S_d(k+1-d)}(D_k)|=a_d^{S_d(k+1-d)}|D_k|=|D_{k+1}|+|D_{k+1-d}|$, we have
\[a_d^{S_d(k+1-d)}=\frac{|D_{k+1}|}{|D_k|}+\frac{|D_{k+1-d}|}{|D_k|}.\] Set $\nu_k:=|D_k|/|D_{k+1}|$. 
Then, 
\begin{equation}
    \label{eq:diam_estimate_1}
    a_d^{S_d(k+1-d)}=\frac{1}{\nu_{k}}+\prod_{i=1}^{d-1}\nu_{k+i-d}.
\end{equation}
By \cref{eq:close_retur_symm_tent_map}, $\nu_k > 1$, so $0 < \nu_k^{-1} < 1$. Since $a_d > 1$, we have
$a_d^{S_d(k+1-d)} \to \infty$ as $k \to \infty$.
Then, $1 - a_d^{-S_d(k+1-d)} \prod_{i=1}^{d-1}\nu_{k+i-d} \to 0$ as $k \to \infty$.
Put 
\begin{equation}
    \label{eq:diam_estimate_2}
    C_{k+1}:=a_d^{-S_d(k+1-d)}\prod_{j=1}^{d-1}\nu_{k+j-d}.
\end{equation}
Then, $0 < C_k < 1$ and $C_k \rightarrow 1$.
By the definition of $\nu_k$, we have
\[
\prod_{i=2}^k a_d^{S_d(i)}C_{d+i} =\prod_{i=2}^k \prod_{j=1}^{d-1}\nu_{i+j-1}
=\prod_{j=1}^{d-1}\left[\nu_{j+1}\nu_{j+2}\cdots \nu_{k+j-1} \right] \\
=\prod_{j=1}^{d-1}\frac{|D_{j+1}|}{|D_{k+j}|}.
\] 
By \cref{lem:sum_cut_times}, 
\[a_d^{S_d(k+d)-S_d(d-1)-S_d(1)-S_d(0)} \prod_{j=1}^{d-1} |D_{k+j}| = \prod_{i=2}^k C_i^{-1} \prod_{j=1}^{d-1}|D_j|.\]
Thus, \[a_d^{S_d(k+d)} \prod_{j=1}^{d-1}|D_{k+j}| = a_d^{S_d(d-1)+S_d(1)+S_d(0)} \left[ \prod_{j=2}^k C_j \right]^{-1} \prod_{j=1}^{d-1}|D_{j+1}|.\]
From the above, in order to prove the lemma, it is enough to prove that $\prod_{j=2}^k C_j$ converges to a strictly positive number as $k \to \infty$. For this purpose, we prove that $\sum_{j \ge 2}|1-C_j| < \infty$.

Dividing \cref{eq:diam_estimate_1} by $a_d^{S_d(k+1-d)}$, and using \cref{eq:diam_estimate_2} give
\[1=\frac{1}{a_d^{S_d(k+1-d)}\,\nu_k}+C_{k+1},\]
hence
\[1-C_{k+1}=\frac{1}{a_d^{S_d(k+1-d)}\,\nu_k}\le a_d^{-S_d(k+1-d)}.\]
Therefore, it suffices to show that $\sum_{n\ge 0} a_d^{-S_d(n)}<\infty$.

Using the recurrence $S_d(n+1)=S_d(n)+S_d(n+1-d)$ (for $n\ge d-1$) and the fact that $S_d(m)\ge 1$ for all $m\ge 0$,
we obtain, for every $n\ge d-1$,
\[S_d(n+1)=S_d(n)+S_d(n+1-d)\ge S_d(n)+1.\]
Iterating yields $S_d(n)\ge n+O(1)$, and thus, since $a_d>1$,
\[\sum_{n\ge 0} a_d^{-S_d(n)}<\infty.\]
Consequently,
\[\sum_{j\ge 2} |1-C_j|=\sum_{j\ge 2} (1-C_j)<\infty.\]
This completes the proof.
\end{proof}
The following lemma gives us an estimate of $\delta_k$.
\begin{lemma}
    \label{lemm:delta_k_estimate}
    For every $k \in \N$ large enough, we have 
    \[ \delta_k = a_d^{S_d(k+1-d)-1}|D_k|. \]
\end{lemma}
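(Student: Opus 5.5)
Since $T_{a_d}$ has slope $\pm a_d$ everywhere, any iterate that is injective on an interval multiplies its length by the corresponding power of $a_d$. The plan is to compute the length of every floor $J_{i,k}^n$ exactly this way, and then show the largest one is the top floor of the central tower.

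\textbf{Central tower.} Write $J_{1,k}=I_k=D_k\cup D'_k$, where $D'_k$ is the piece on the other side of $c$. By \cref{eq:I_k_short_def} and \cref{eq:close_retur_symm_tent_map}, $|D'_k|\le|D_k|$, since the other endpoint of $I_k$ is a later closest return. Because $T_{a_d}$ is symmetric, $J_{1,k}^1=T_{a_d}(D_k)$, so $|J_{1,k}^1|=a_d|D_k|$. For $1\le n<h_{1,k}=S_d(k+1-d)$, \cref{lem: injectivity} says $c\notin J_{1,k}^n$, so $T_{a_d}^{n-1}$ is injective on $J_{1,k}^1$. Hence
\[
|J_{1,k}^n|=a_d^{n}|D_k| \quad (1\le n\le h_{1,k}-1), \qquad |J_{1,k}|\le 2|D_k|.
\]
The maximum over this tower is therefore $a_d^{S_d(k+1-d)-1}|D_k|$, provided $S_d(k+1-d)-1\ge1$ and $a_d^{S_d(k+1-d)-1}\ge2$. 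Both hold for $k$ large because $S_d(k+1-d)\to\infty$.

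\textbf{Other towers.} For $2\le i\le d$, \cref{eq:J_i_k_to_next_level} and \cref{eq:def_J_ik} give $J_{i,k}=J_{d,k-d+i}=I_{k+i-1}^{S_d(k+i-1-d)}$. The interval $I_{k+i-1}^1$ has length at most $a_d|D_{k+i-1}|$, and by \cref{lem: injectivity} the orbit is injective up to the top of tower $i$. The last floor of tower $i$ is
\[
I_{k+i-1}^{S_d(k+i-1-d)+S_d(k+i-2d)-1}=I_{k+i-1}^{S_d(k+i-d)-1},
\]
using the recurrence. Every floor of tower $i$ therefore has length at most $a_d^{S_d(k+i-d)-1}|D_{k+i-1}|$. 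So it suffices to show, for $2\le i\le d$ and $k$ large,
\[
a_d^{S_d(k+i-d)-S_d(k+1-d)}\,|D_{k+i-1}|\le |D_k|.
\]

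\textbf{Main obstacle: the comparison.} Lemma~\ref{lem: diameter} controls only products of $d-1$ consecutive $|D_j|$, not individual ratios, so I will argue from the exact identity in its proof instead. Applying \cref{eq:diam_estimate_1} at successive indices gives
\[
a_d^{S_d(m+1-d)}|D_m|=|D_{m+1}|+|D_{m+1-d}|,
\]
and hence $a_d^{S_d(m+1-d)}|D_m|\ge|D_{m+1-d}|$. Put $m=k+j$ for $j=1,\dots,i-1$ and multiply the resulting inequalities. The exponents telescope, since $\sum_{j=1}^{i-1}S_d(k+j+1-d)$ equals $S_d(k+i-d)-S_d(k+1-d)$ up to lower-order terms coming from the recurrence $S_d(n)-S_d(n-1)=S_d(n-d)$. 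I would first try to organize this telescoping exactly using \cref{lem:sum_cut_times}. If the exponents do not match cleanly, the fallback is the sharper asymptotic $|D_{m+1}|/|D_{m+1-d}|\to0$, which follows from $C_k\to1$ in the proof of Lemma~\ref{lem: diameter}, together with monotonicity of $|D_j|$; either route should give the comparison for $k$ large. Finally, $\delta_k$ is a maximum over floors, and the top central floor attains exactly the claimed value, so the inequalities above give equality.
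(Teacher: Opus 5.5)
Your reduction is correct and matches the paper. The central-tower floors have length $a_d^{n}|D_k|$ with $|J_{1,k}|\le 2|D_k|$. The top floor of tower $i\ge 2$ is $I_{k+i-1}^{S_d(k+i-d)-1}$, of length $a_d^{S_d(k+i-d)-1}|D_{k+i-1}|$. So everything hinges on the comparison $a_d^{S_d(k+i-d)-1}|D_{k+i-1}|\le a_d^{S_d(k+1-d)-1}|D_k|$.

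That comparison, which you flag as the main obstacle, is never established, and your primary route cannot establish it. The inequality $a_d^{S_d(m+1-d)}|D_m|\ge |D_{m+1-d}|$ is a \emph{lower} bound on $|D_m|$. Multiplying these over $m=k+1,\dots,k+i-1$ gives a lower bound on $a_d^{(\cdot)}|D_{k+1}|\cdots|D_{k+i-1}|$, whereas you need an \emph{upper} bound on $a_d^{(\cdot)}|D_{k+i-1}|$ in terms of $|D_k|$. The exponents do not telescope either. Your sum is $\sum_{j=1}^{i-1}S_d(k+j+1-d)=\sum_{n=k+2-d}^{k+i-d}S_d(n)$, while by the recurrence $S_d(k+i-d)-S_d(k+1-d)=\sum_{n=k+2-d}^{k+i-d}S_d(n-d)$. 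The first is asymptotically $\beta_d^{d}$ times the second, so the discrepancy is of the same order as the exponents themselves, not lower order. The fallback through $|D_{m+1}|/|D_{m+1-d}|\to 0$ is only asserted to ``work'', with no argument given.

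The missing idea is to use the identity from the proof of \cref{lem: diameter} as an \emph{equality} at the two indices you want to compare, not as an inequality along a chain. For all large $m$,
\[ a_d^{S_d(m+1-d)-1}|D_m| = a_d^{-1}\bigl(|D_{m+1}|+|D_{m+1-d}|\bigr). \]
Take $m=k+i-1$ and $m=k$, and use the strict decrease of $|D_j|$ from \cref{eq:close_retur_symm_tent_map}. Then for every $2\le i\le d$,
\[ a_d^{S_d(k+i-d)-1}|D_{k+i-1}| = a_d^{-1}\bigl(|D_{k+i}|+|D_{k+i-d}|\bigr) < a_d^{-1}\bigl(|D_{k+1}|+|D_{k+1-d}|\bigr) = a_d^{S_d(k+1-d)-1}|D_k| . \]
This is exactly the paper's argument, phrased there as the sequence $m\mapsto a_d^{S_d(m+1-d)-1}|D_m|$ being strictly decreasing. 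No asymptotics are needed, and with this step the rest of your proof goes through.
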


\begin{proof}
    Fix $k \in \N$ large enough.
    By \cref{eq:def_J_ik}, for each $2 \le j \le d$, and each $0 \le i < S_d(k+j-2d)$,
    \begin{equation}
        \label{delta_k_esti_1}
        |J_{j,k}^i| = |J_{1,k+j-1}^{S_d(k+j-d-1)+i}| = a_d^{S_d(k+j-d-1)+i}|D_{k+j-1}|.
    \end{equation}
    Also, for every $1 \le i < S_d(k+1-d)$,
    \begin{equation}
        \label{delta_d_esti_2}
        |J_{1,k}^i| = a_d^i|D_k|.
    \end{equation}
    Since $a_d > 1$, we have 
    \[ \delta_k = \max \{|J_{1,k}|, a_d^{S_d(k+1-d)-1}|D_k|, a_d^{S_d(k+2-d)-1}|D_{k+1}|, \ldots, 
    a_d^{S_d(k)-1}|D_{k+d-1}|\}. \]
    Now, since for every $n \in \N$
    \[ |T_{a_d}^{S_d(k+n-d)}(D_{k+n-1})| = a_d^{S_d(k+n-d)}|D_{k+n-1}| = |D_{k+n}| + |D_{k+n-d}|, \]
    we have 
    \begin{multline*}
       a_d^{S_d(k+n-d)-1}|D_{k+n-1}|- a_d^{S_d(k+n+1-d)-1}|D_{k+n}| = a_d^{-1} [ 
        (|D_{k+n}| - |D_{k+n+1}|) +  \\ 
        (|D_{k+n-d}| - |D_{k+n+1-d}|) ].
    \end{multline*}
    By \cref{eq:close_retur_symm_tent_map}, we conclude that 
    \[ a_d^{S_d(k+n-d)-1}|D_{k+n-1}|- a_d^{S_d(k+n+1-d)-1}|D_{k+n}| > 0, \]
    and thus 
    \[ a_d^{S_d(k+n-d)-1}|D_{k+n-1}| > a_d^{S_d(k+n+1-d)-1}|D_{k+n}|. \]
    By the above, we have 
    \[ \delta_k = \max \{|J_{1,k}|, a_d^{S_d(k+1-d)-1}|D_k|\}.  \]
    Finally, by \cref{eq:I_k_interval}, and since $a_d > 1$, then for every $k \in \N$ large enough, we have 
    \[ |J_{1,k}| \leq 2|D_k| \leq a_d^{S_d(k+1-d)-1}|D_k|. \] This concludes the proof of the lemma.
\end{proof}

\begin{proof}[Proof of \cref{theo:HD}] 
    Since each $M_{d,k}$ is the union of $S_d(k)$ intervals of length at most $\delta_k$, we have that, for each $\alpha > 0$, 
    \[ \cH_{\delta_k}^{\alpha}(\omega_{T_{a_d}}(0)) \leq S_d(k) \delta_k ^{\alpha}. \]
    
    Now,
    
    \begin{align*}
        a_d^{S_d(k+1-d)-1}|D_k| &= a_d^{-1} (|D_{k+1}| + |D_{k+1-d}|) \\
        &< 2a_d^{-1}|D_{k+1-d}| \\
        &< 2 a_d^{-1} a_d^{-S_d(k+2-2d)}.
    \end{align*}
    And thus
    \begin{align*}
        \cH_{\delta_k}^{\alpha} (\omega_{T_{a_d}}(0)) &\leq S_d(k) 2^{\alpha} a_d^{-\alpha}
        a_d^{-\alpha S_d(k+2-2d)} \\
        &= \left( \frac{2}{a_d} \right)^{\alpha} S_d(k) a_d^{-\alpha S_d(k+2-2d)} \\
        &\le C \left( \frac{2}{a_d} \right)^{\alpha} \beta_d^{k+2d} a_d^{-\alpha S_d(k+2-2d)} \\
        &= C\left( \frac{2}{a_d} \right)^{\alpha} \left( \frac{\beta_d^{k+2d / S_d(k+2-2d)}}{a_d^{\alpha}} 
        \right)^{S_d(k+2-2d)},
    \end{align*}
    where $C>0$ is a constant, and $\beta_d$ is the largest real solution of the equation $x^d -x^{d-1}-1 = 0$.
    Since \[ \frac{k+2d}{S_d(k+2-2d)} \to 0 \] as $k \to \infty$, we conclude that 
    \[\cH_{\delta_k}^{\alpha} (\omega_{T_{a_d}}(0)) \longrightarrow 0\] as $k \to \infty$, for any $\alpha > 0$. Thus, 
    \[ \HD( \omega_{T_{a_d}}(0)) = 0. \]
\end{proof}

\bibliography{Biblio}{}
\bibliographystyle{alpha}
\end{document}